\newtheorem{tw}{Theorem}[subsection]
\newtheorem{lm}[tw]{Lemma}
\newtheorem{wn}[tw]{Corollary}
\newtheorem{pr}[tw]{Proposition}
\newtheorem{hyp}{Conjecture}
\theoremstyle{definition}
\newtheorem{uw}[tw]{Remark}
\newcommand{\R}{\mathbb{R}}
\newcommand{\Z}{\mathbb{Z}}
\newcommand{\N}{\mathbb{N}}
\newcommand{\T}{\mathbb{T}}
\newcommand{\C}{\mathbb{C}}
\newcommand{\Q}{\mathbb{Q}}
\newcommand{\raz}{\mathbbm{1}}
\newcommand{\ov}{\overline}
\newcommand{\vep}{\varepsilon}
\newcommand{\mobius}{M\"obius }
\newcommand{\bmu}{\bm \mu}
\newcommand{\mob}{\bm \mu}
\newcommand{\ot}{\otimes}
\newcommand{\la}{\lambda}
\providecommand{\noopsort}[1]{} %potrzebne, gdy nazwiska z prefiksami!
\title{The M\"{o}bius function and continuous extensions of rotations}
\author{J. Ku\l{}aga-Przymus\thanks{Research supported by Narodowe Centrum Nauki grant DEC-2011/03/B/ST1/00407.}\and M. Lema\'{n}czyk\footnotemark[1]}
\begin{document}

\bibliographystyle{siam}

\maketitle
\begin{abstract}
Let $f\colon \T\to \R$ be of class $C^{1+\delta}$ for some $\delta>0$ and let $c\in\Z$. We show that for a generic $\alpha\in\R$, the extension $T_{c,f}\colon \T^2\to\T^2$ of the irrational rotation $Tx=x+\alpha$, given by $T_{c,f}(x,u)=(x+\alpha, u+cx+f(x))$ ($\bmod\  1$) satisfies Sarnak's conjecture.
\end{abstract}
\tableofcontents

\section{Introduction}\label{se:1}
Recall that the \mobius function $\bmu\colon \N\to \{-1,0,1\}$ is a multiplicative function\footnote{Recall that $\nu\colon \N\to\Z$ is said to be multiplicative if $\nu(mn)=\nu(m)\nu(n)$ for $n,m$ relatively prime.} defined as $\bmu(p_1\cdot\ldots\cdot p_k)=(-1)^k$ for  distinct prime numbers $p_j$, $\bmu(1)=1$ and 0 otherwise.
Its importance is reflected in the fact that the prime number theorem\footnote{Recall that the prime number theorem states that $\pi(x)=\frac{x}{\ln x}+{\rm o}(\frac{x}{\ln x})$, where $\pi(x)$ is the number of primes less than $x$.} is equivalent to the condition $\sum_{k\leq x}\bmu(k)={\rm o}(x)$ and the Riemann hypothesis is equivalent to the condition $\sum_{k\leq x}\bmu(k)={\rm O}_\vep(x^{\frac{1}{2}+\vep})$ for any $\vep>0$ (when $x\to\infty$). The \mobius function appears to behave rather randomly and this statement was formalized in the following conjecture of Sarnak:
\begin{hyp}[\cite{MR3014544}]\label{con:1}
Let $X$ be a compact metric space and let $T\colon X\to X$ be a homeomorphism of zero topological entropy. Let $x\in X$, $g\in C(X)$. Then
\begin{equation}\label{eq:zero}
\sum_{n\leq N}g(T^nx)\bmu(n)={\rm o}(N).
\end{equation}
\end{hyp}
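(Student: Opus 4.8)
The statement above is Sarnak's conjecture in full generality, which is open; accordingly what follows is the roadmap one would pursue, together with the point at which any such attempt currently stalls, and the concrete reduction that the present paper carries out. The plan is first to convert the topological statement into a family of ergodic-theoretic ones. Fix $x\in X$ and $g\in C(X)$, and consider the empirical measures $\frac1N\sum_{n\le N}\delta_{T^nx}$; each weak-$*$ limit point $\mu$ is $T$-invariant, and by the variational principle the hypothesis $h_{\mathrm{top}}(T)=0$ forces $h_\mu(T)=0$. Since Kolmogorov--Sinai entropy is affine under the ergodic decomposition, $\mu$-a.e.\ ergodic component again has zero entropy, and a routine generic-point argument reduces \eqref{eq:zero} to the following: for every ergodic measure-preserving system $(X,\mathcal B,\mu,T)$ with $h_\mu(T)=0$ and every $g\in C(X)$, the quantity $\frac1N\sum_{n\le N}g(T^nx)\bmu(n)$ tends to $0$ along the relevant generic points.

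\textbf{The orthogonality criterion.} The second step is to invoke the K\'atai--Bourgain--Sarnak--Ziegler criterion: for a bounded sequence $\xi(n)=g(T^nx)$ it suffices to show that for all, or all but finitely many, pairs of distinct primes $p,q$,
\[
\limsup_{N\to\infty}\Bigl|\frac1N\sum_{n\le N}g(T^{pn}x)\,\overline{g(T^{qn}x)}\Bigr|=0 .
\]
Equivalently, the self-joinings arising as limits of $\frac1N\sum_{n\le N}\delta_{(T^{pn}x,\,T^{qn}x)}$ must be as disjoint as possible, namely they must reduce to the product measure on the common (Kronecker-type) factor of $(X,T^p)$ and $(X,T^q)$. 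A clean sufficient condition is the asymptotic orthogonality of powers (AOP) property: for $p\ne q$ sufficiently large, every ergodic joining of $(X,T^p)$ and $(X,T^q)$ is (close to) the product joining, provided the two systems share no nontrivial common eigenvalues. When $(X,T)$ has AOP, combining it with the cancellation of $\bmu$ along residue classes yields \eqref{eq:zero}; this is exactly how the conjecture is known for systems with discrete or quasi-discrete spectrum, for many distal systems, for nilsystems, horocycle flows, and various rank-one constructions.

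\textbf{The obstacle, and the concrete plan.} The hard part --- the reason the conjecture remains open as stated --- is that zero topological entropy alone does not furnish the rigidity needed to control prime-power self-joinings: a general zero-entropy ergodic system need not be distal, need not have singular maximal spectral type, need not possess AOP, and its higher-order joinings can be essentially arbitrary. Thus no single argument is known to cover all zero-entropy homeomorphisms; one must either (i) prove Chowla's conjecture on the autocorrelations of $\bmu$, from which Sarnak's follows by a short deduction, or (ii) verify AOP (or some weaker decorrelation of prime powers) class by class. The route taken here is (ii), specialised to the extensions $T_{c,f}\colon(x,u)\mapsto(x+\alpha,\,u+cx+f(x))$: for a generic $\alpha$ and $f\in C^{1+\delta}$ these skew products have a well-understood spectral and joining structure --- a quasi-discrete-spectrum contribution coming from the term $cx$ together with an Anzai-cocycle contribution coming from $f$ --- and the plan is to establish the requisite AOP-style estimate for this structure, after reducing away the degenerate (coboundary) cases, and then to conclude via the K\'atai--Bourgain--Sarnak--Ziegler criterion that \eqref{eq:zero} holds for every $g\in C(\T^2)$ and every $x\in\T^2$.
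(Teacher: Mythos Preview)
You correctly recognise that this statement is Sarnak's conjecture in full generality and is open; the paper records it as a conjecture and offers no proof of it as stated, so there is no ``paper's own proof'' to compare against, and your candid acknowledgment of the obstacle is the appropriate response.

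Your roadmap nonetheless differs in emphasis from the paper's concrete strategy for the special cases it \emph{does} treat. You frame the plan through an AOP-style control of joinings of $(X,T^p)$ with $(X,T^q)$ after a generic-point/ergodic-decomposition reduction. The paper uses neither that reduction nor AOP language; instead it isolates three directly verifiable conditions~\eqref{numer:a}--\eqref{numer:c} on $T^r\times T^s$ --- that its ergodic components form a closed partition of the space, that each is uniquely ergodic, and that a linearly dense family of characters integrates to zero on every component once $r,s$ are large --- and checks them for the Anzai extensions by explicit cocycle computations (Theorem~\ref{tw:25}, Corollary~\ref{wn:21}) showing the $\T^2$-valued cocycles $\psi_{c_1}$ are ergodic for generic~$\alpha$. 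Both plans route through the KBSZ criterion, so they agree in spirit, but the paper's execution is hands-on rather than via abstract joining rigidity, and it works directly with \emph{every} point of the concrete space $\T^2$ rather than passing through quasi-generic-point arguments (which, incidentally, are not as routine as your first paragraph suggests: an arbitrary $x$ need only be quasi-generic along subsequences, and handling that carefully is part of the difficulty).
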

Whenever condition~\eqref{eq:zero} is true for some $T$ for each $x\in X$ and each $g\in C(X)$, we say that \emph{Sarnak's conjecture holds} for $T$ or that $T$ is \emph{disjoint from the \mobius function}.\footnote{Notice that it suffices to show that~\eqref{eq:zero} holds for a linearly dense set of continuous functions to obtain disjointness with the \mobius function.}

Sarnak's conjecture is known to hold in several situations, including rotations~\cite{davenport}, nilsystems~\cite{MR2877066}, horocycle flows~\cite{MR2986954}, large class of rank one maps~\cite{MR3095150,MR3121731} and certain subclasses of dynamical systems generated by generalized Morse sequences~\cite{MR0239047}, including the classical Thue-Morse system:~\cite{Abdalaoui:2013rm,MR3043150,MR2207389,MR2981162,MR1912360,MR2680394} and the dynamical system generated by the Rudin-Shapiro sequence~\cite{MaRi2013}.

One of the most fruitful tools used for proving disjointness with the M\"obius function turns out to be the following orthogonality criterion of Katai-Bourgain-Sarnak-Ziegler (we will refer to it as KBSZ criterion).
\begin{tw}[\cite{MR836415,MR2986954}]
\label{tw:kryterium}
Let $F\colon \N\to \C$ be a bounded sequence. Suppose that
\begin{equation}\label{eq:kryterium}
\sum_{n\leq N}F(rn)\overline{F(sn)}={\rm o}(N)
\end{equation}
for any pair of sufficiently large primes $r\neq s$. Then
\begin{equation}\label{eq:sarnak}
\sum_{n\leq N}F(n)\nu(n)={\rm o}(N),
\end{equation}
for any multiplicative function $\nu$ with $|\nu|\leq 1$.
\end{tw}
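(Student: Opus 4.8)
The plan is to run the dilation argument of Bourgain--Sarnak--Ziegler. First I would normalise so that $|F|\le1$ (the general bounded case is identical after rescaling $F$) and assume $|\nu|\le1$, and argue by contradiction: suppose~\eqref{eq:sarnak} fails, so that $\big|\sum_{n\le N_k}F(n)\nu(n)\big|\ge\delta N_k$ for some $\delta>0$ and some $N_k\to\infty$. I would then fix a large threshold $z$ and a large exponent $A$ — both to be pinned down at the very end in terms of $\delta$ alone — set $\cP=\{p\text{ prime}:z<p\le z^A\}$, and write $\omega_\cP(n)=\#\{p\in\cP:p\mid n\}$ and $\mathcal L=\sum_{p\in\cP}p^{-1}$; by Mertens' theorem $\mathcal L=\log A+o_A(1)$, so $\mathcal L$ can be made arbitrarily large while $z$ stays fixed. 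The idea is to transfer the largeness of the correlation sum onto the dilated sums $\sum_m\nu(m)F(pm)$ by multiplicativity of $\nu$, and then annihilate those using Cauchy--Schwarz together with the hypothesis~\eqref{eq:kryterium}.

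\emph{Step 1 (multiplicativity and concentration of $\omega_\cP$).} Substituting $n=pm$ gives $\sum_{p\in\cP}\sum_{m\le N/p}\nu(pm)F(pm)=\sum_{n\le N}\nu(n)F(n)\,\omega_\cP(n)$. Since $\nu(pm)=\nu(p)\nu(m)$ whenever $p\nmid m$, while the exceptional $m\le N/p$ (those divisible by $p$) number at most $N/p^2$, the left-hand side equals $\sum_{p\in\cP}\nu(p)\sum_{m\le N/p}\nu(m)F(pm)+O(N/z)$. A direct second-moment computation of Tur\'an--Kubilius type gives $\sum_{n\le N}(\omega_\cP(n)-\mathcal L)^2\ll N\mathcal L$ once $N$ is large compared with $|\cP|$, whence by Cauchy--Schwarz $\big|\sum_{n\le N}\nu(n)F(n)(\omega_\cP(n)-\mathcal L)\big|\ll N\sqrt{\mathcal L}$. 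Therefore, along $N=N_k$,
\[
\Big|\sum_{p\in\cP}\nu(p)\sum_{m\le N/p}\nu(m)F(pm)\Big|\ \ge\ \mathcal L\,\delta N-O\big(N\sqrt{\mathcal L}\big)-O(N/z)\ \ge\ \tfrac{\delta}{4}\,\mathcal L\,N
\]
as soon as $\mathcal L$ and $z$ are large enough in terms of $\delta$; in particular $\sum_{p\in\cP}\big|\sum_{m\le N/p}\nu(m)F(pm)\big|\ge\tfrac{\delta}{4}\mathcal L N$.

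\emph{Step 2 (Cauchy--Schwarz and the hypothesis).} Choose unimodular $c_p$ with $c_p\sum_{m\le N/p}\nu(m)F(pm)=\big|\sum_{m\le N/p}\nu(m)F(pm)\big|$ and interchange the order of summation:
\[
\tfrac{\delta}{4}\,\mathcal L\,N\ \le\ \Big|\sum_{m\le N/z}\nu(m)\!\!\sum_{\substack{p\in\cP\\ p\le N/m}}\!\!c_p\,F(pm)\Big|.
\]
Now I would apply Cauchy--Schwarz in $m$ (using $|\nu(m)|\le1$ and that $m$ runs over at most $N/z$ integers) and expand the square: the diagonal $p=p'$ contributes at most $\sum_{p\in\cP}\#\{m:pm\le N\}\le N\mathcal L$, and the off-diagonal is $\sum_{p\ne p'\in\cP}c_p\overline{c_{p'}}\sum_{m\le N/\max(p,p')}F(pm)\overline{F(p'm)}$. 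For each fixed pair of distinct primes $p,p'\in\cP$ — all $>z$, hence sufficiently large once $z$ was chosen large enough — hypothesis~\eqref{eq:kryterium} gives $\sum_{m\le M}F(pm)\overline{F(p'm)}=o(M)$ as $M\to\infty$, and since $M=\lfloor N/\max(p,p')\rfloor\to\infty$ while $\cP$ is a fixed finite set, the entire off-diagonal is $o_N(|\cP|^2N)$. Putting this together, $\big(\tfrac{\delta}{4}\mathcal L N\big)^2\le\tfrac{N}{z}\big(N\mathcal L+o_N(|\cP|^2N)\big)$, i.e. $\tfrac{\delta^2}{16}\mathcal L^2\le\mathcal L/z+o_N(1)$; letting $k\to\infty$ removes the error term and yields $\mathcal L\le16/(\delta^2z)$.

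It then remains to fix the parameters in the right order: given $\delta$, choose $z$ large enough to meet every ``$z$ large'' requirement above (the Tur\'an--Kubilius error, the tail $\sum_{p>z}p^{-2}$, the pigeonhole threshold on $\mathcal L$, and the ``sufficiently large primes'' condition of~\eqref{eq:kryterium}) and also with $z>32/\delta^2$, and then choose $A$ so large that $\mathcal L\approx\log A$ exceeds $16/(\delta^2z)$. This contradicts the bound just derived, so no such $\delta$ exists and~\eqref{eq:sarnak} holds. I expect the only genuinely delicate point to be the order of quantifiers — $\cP$ must be chosen after $z$, but $|\cP|$ must be frozen before $N\to\infty$, so that the finitely many applications of~\eqref{eq:kryterium} each contribute a true $o_N(N)$; everything else (Mertens' estimate, the second moment of $\omega_\cP$, the two scalar Cauchy--Schwarz inequalities) is routine.
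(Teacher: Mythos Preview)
The paper does not give its own proof of this theorem: it is stated with attribution to K\'atai and to Bourgain--Sarnak--Ziegler and then used as a black box, so there is nothing in the paper to compare your argument against.

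That said, your write-up is precisely the Bourgain--Sarnak--Ziegler dilation argument and is essentially correct. One cosmetic point about the closing parameter choice: with $z>32/\delta^2$ the bound $\mathcal L>16/(\delta^2 z)$ only forces $\mathcal L>\tfrac12$, which by itself is not enough for Step~1, where you need $\mathcal L\gg_\delta 1$ (in fact $\mathcal L\ge C/\delta^2$ for an absolute $C$) to absorb the $O(N\sqrt{\mathcal L})$ Tur\'an--Kubilius error. The contradiction is cleaner phrased as: Step~1 requires $\mathcal L\ge C/\delta^2$, while Step~2 forces $\mathcal L\le 16/(\delta^2 z)$, so taking $z>16/C$ already clashes. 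Also, the parenthetical list of ``$z$ large'' requirements mislabels the Tur\'an--Kubilius error as a $z$-condition when it is really an $\mathcal L$-condition. None of this affects the substance; the order of quantifiers you flag (fix $z$, then $A$ and hence the finite set $\cP$, then send $N\to\infty$ so that the finitely many uses of~\eqref{eq:kryterium} are legitimate) is exactly right.
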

In order to use this theorem for proving Conjecture~\ref{con:1} for a given homeomorphism $T\colon X\to X$, one takes
\begin{equation}\label{eq:zg}
F(n):=g(T^nx) \text{ for }n\in\Z, x\in X \text{ and }g\in C(X).
\end{equation}
Notice that the expression~\eqref{eq:kryterium} (for $F(n)$ given by~\eqref{eq:zg}) takes the form
$$
\frac{1}{N}\sum_{n\leq N}g\otimes \overline{g}\left((T^r\times T^s)^n(x,x) \right)=\int_{X^2} g\otimes \overline{g}\ d\left(\frac{1}{N}\sum_{n\leq N}\delta_{(T^r\times T^s)^n(x,x)}\right),
$$
where $x\in X$. It follows that we are interested in the limit measures $\rho=\lim_{k\to\infty}\frac{1}{N_k}\sum_{n\leq N_k}\delta_{(T^r\times T^s)^n(x,x)}$ which are clearly $T^r\times T^s$-invariant. Therefore, to prove disjointness of $T$ with the M\"obius function, it suffices to show that the following holds for $T$, for $r,s$ relatively prime:
\begin{enumerate}[(a)]
\item\label{numer:a}
The ergodic components of $T^r\times T^s$ are pairwise disjoint closed sets filling up the whole space.\footnote{The proof of the main result of the paper (Theorem~\ref{thm:main} below) says also that the ergodic decomposition  will be the same as the decomposition into minimal components which seems to be a fact of independent interest. In case of continuous compact group extensions the existence of the decomposition into minimal components is guaranteed by a result of Auslander~\cite{MR0164335} and Ellis~\cite{MR0123636} on distal systems.}
\item\label{numer:b}
The ergodic components are uniquely ergodic (this implies that all points are generic for $T^r\times T^s$ for relevant invariant measures).
\item\label{numer:c}
There exists a linearly dense set\footnote{Notice that we do not aim  to prove~\eqref{eq:kryterium} (for $F(n)$ given by~\eqref{eq:zg}) for each $g\in C(X)$ (and each $x\in X$) -- as a matter of fact~\eqref{eq:kryterium} (for $F(n)$ given by~\eqref{eq:zg}) is not satisfied for some continuous functions already for an irrational rotation; we provide examples in Appendix, see Proposition~\ref{p1}. Our aim is to prove~\eqref{eq:kryterium} for
a linearly dense set of $g\in C(X)$ (and each $x\in X$), as it implies that~\eqref{eq:sarnak} holds for each $g\in C(X)$.
} of continuous functions ${\mathcal{F}}\subset C(X)$ such that for each $g\in \mathcal{F}$, $g\circ T\neq g$, we have $\int_{X^2} g\otimes \overline{g}\ d\rho=0$ for any $T^r\times T^s$-ergodic measure $\rho$, whenever $r,s$ are sufficiently large.
\end{enumerate}

We will use the strategy~\eqref{numer:a},~\eqref{numer:b},~\eqref{numer:c} to study disjointness with the M\"obius function in the following setting.
Denote by $\T=\R/\Z=[0,1)$ the additive circle and consider
\begin{equation}\label{eq:1}
\T^2\ni (x,y)\mapsto T_{c,f}(x,y):=(x+\alpha,y+cx+f(x))\in\T^2.
\end{equation}
where $c\in\Z$  and $cx+f(x)$ is a lift of a continuous circle map, i.e.\ $f\colon \R\to \R$ is continuous, periodic of period~$1$, and $c$ is the degree of the map in question. In other words, we consider continuous case of the classical Anzai skew product extensions of a rotation on the circle~\cite{MR0040594}.

Liu and Sarnak in their recent paper~\cite{Liu:2013fk} proved the following.
\begin{tw}[\cite{Liu:2013fk}]\label{tw:liusar}
Assume that in~\eqref{eq:1}, $f\colon\R\to\R$ is an analytic  periodic function of period $1$. Assume additionally that $|\widehat{f}(m)| \gg e^{-\tau |m|}$ for some $\tau>0$. Then $T_{c,f}$ satisfies Conjecture~\ref{con:1}.
\end{tw}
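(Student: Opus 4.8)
I would verify the orthogonality criterion~\eqref{eq:kryterium} of Theorem~\ref{tw:kryterium} for the sequences $F(n)=g(T_{c,f}^n(x,u))$, with $g$ running over the linearly dense family of characters $g_{k,m}(x,u)=e(kx+mu)$ (where $e(t):=e^{2\pi it}$), $(k,m)\in\Z^2$, and all $(x,u)\in\T^2$, following the scheme~\ref{numer:a}--\ref{numer:c}. For large primes $r\neq s$ the map $T_{c,f}^r\times T_{c,f}^s$ is a $\T^2$-group extension of the rotation $R\colon(x_1,x_2)\mapsto(x_1+r\alpha,x_2+s\alpha)$ of $\T^2$ by the cocycle $(x_1,x_2)\mapsto\big(\varphi^{(r)}(x_1),\varphi^{(s)}(x_2)\big)$, where $\varphi(x)=cx+f(x)$ and $\varphi^{(\ell)}=\sum_{j=0}^{\ell-1}\varphi\circ R_\alpha^j$, so $\varphi^{(\ell)}(x)=c\ell x+c\binom{\ell}{2}\alpha+f^{(\ell)}(x)$. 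Since $r\neq s$ are prime, the ergodic components of $R$ are the invariant circles $C_\beta=\{sx_1-rx_2=\beta\}$, on each of which $R$ acts as a minimal rotation, and the diagonal point $(x,x)$ lies on $C_{(s-r)x}$. Suppose we have shown, for every $\beta$, that the $\T^2$-extension $E_\beta$ of $R|_{C_\beta}$ by $(\varphi^{(r)},\varphi^{(s)})$ is minimal and \emph{uniquely} ergodic; then \ref{numer:a} and~\ref{numer:b} hold, the ergodic components of $T_{c,f}^r\times T_{c,f}^s$ being the sets $C_\beta\times\T^2$ with the measures $\kappa_\beta=\mathrm{Haar}_{C_\beta}\otimes\mathrm{Haar}_{\T^2}$, every diagonal point is generic for the relevant $\kappa_\beta$, and for any non-constant $g_{k,m}$ (which automatically satisfies $g_{k,m}\circ T_{c,f}\neq g_{k,m}$) one gets $\int g_{k,m}\otimes\overline{g_{k,m}}\,d\kappa_\beta=\big(\int_{C_\beta}e(k(x_1-x_2))\big)\cdot\big(\int_{\T^2}e(m(u_1-u_2))\,du_1\,du_2\big)$; this vanishes because the second factor is $0$ when $m\neq0$, while if $m=0$ then $k\neq0$ and, parametrising $C_\beta$ by $(a+r\theta,b+s\theta)$, the first factor is $e(k(a-b))\int_\T e(k(r-s)\theta)\,d\theta=0$. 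Thus~\ref{numer:c} follows and Theorem~\ref{tw:kryterium} gives disjointness of $T_{c,f}$ from $\bmu$; everything reduces to~\ref{numer:a} and~\ref{numer:b}.

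\textbf{Ergodicity of $E_\beta$.} By the standard criterion for group extensions of a rotation, $E_\beta$ is ergodic iff for every $(p,q)\in\Z^2\setminus\{(0,0)\}$ the function $p\varphi^{(r)}(x_1)+q\varphi^{(s)}(x_2)$ is not cohomologous over $R|_{C_\beta}$ to an eigenfunction; the linear part $p(crx_1)+q(csx_2)$ restricts to $C_\beta$ as an eigenfunction, so this is the same statement with $f^{(\ell)}$ in place of $\varphi^{(\ell)}$, and since $f^{(\ell)}$ is real-valued (degree $0$), "cohomologous to an eigenfunction" means "cohomologous to a constant", i.e.\ $\sum_{\mu\neq0}|\widehat{h}(\mu)|^2/|e(\mu\alpha)-1|^2<\infty$ where $h(\theta)=pf^{(r)}(a+r\theta)+qf^{(s)}(b+s\theta)$. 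The hypothesis $|\widehat f(m)|\gg e^{-\tau|m|}$ is decisive here: it forces every $\widehat f(\ell)\neq0$, so (using $\gcd(r,s)=1$) for, say, $p\neq0$ the coefficient of $h$ at a $\theta$-frequency $\ell r$ with $s\nmid\ell$ is $p\,\widehat f(\ell)\tfrac{e(\ell r\alpha)-1}{e(\ell\alpha)-1}\neq0$, hence $\sum_{\mu\neq0}|\widehat h(\mu)|^2/|e(\mu\alpha)-1|^2\gg\sum_{s\nmid\ell}|\widehat f(\ell)|^2/|e(\ell\alpha)-1|^2$, which diverges exactly when $f$ is not an $L^2$-coboundary of $R_\alpha$. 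So $E_\beta$ is ergodic whenever $f$ fails to be an $L^2$-coboundary. In the complementary case --- which for analytic $f$ occurs when $\alpha$ is sufficiently Diophantine relative to $\tau$, and where $f$ is then even an \emph{analytic} coboundary $f=g\circ R_\alpha-g$ --- the homeomorphism $(x,u)\mapsto(x,u-g(x))$ conjugates $T_{c,f}$ to $T_{c,0}$, for which $F(rn)\overline{F(sn)}=e(Q(n))$ with $Q$ a polynomial of degree $\leq2$ whose leading coefficient, when $m\neq0$, is $\tfrac{mc(r^2-s^2)}{2}\alpha\notin\Q$; Weyl's inequality gives~\eqref{eq:kryterium} uniformly in $(x,u)$ (for $m=0$, $k\neq0$ use the linear term $k(r-s)\alpha\,n$), and if $c=0$ the conjugate is $R_\alpha\times\mathrm{id}$, handled by Davenport's estimate for rotations. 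Hence only~\ref{numer:b} remains, in the case where $f$ is not an $L^2$-coboundary.

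\textbf{Unique ergodicity of $E_\beta$ --- the main obstacle.} Upgrading ergodicity of $E_\beta$ to unique ergodicity when $f$ is not an $L^2$-coboundary (so $\alpha$ is Liouville) is where the real work lies, and I expect this to be the crux. The plan is to exploit the very good rational approximations $p_k/q_k\to\alpha$: on windows of length $\asymp q_k$ the rotation $R_\alpha$ is uniformly close to the periodic rotation $R_{p_k/q_k}$, so on that scale $T_{c,f}$ is governed by a (periodic rotation)$\times$(rotation)-type system, on whose orbit closures unique ergodicity can be checked by hand; quantitatively, applying van der Corput differencing to $F(rn)\overline{F(sn)}$ together with the identity $f^{(\ell n+\ell h)}(x)-f^{(\ell n)}(x)=f^{(\ell h)}(x+\ell n\alpha)$ reduces matters to showing that the oscillatory integrals $I_h(x)=\int_\T e\big(m(-f^{(rh)}(x+r\theta)+f^{(sh)}(x+s\theta))\big)\,d\theta$ are small on average in $h$, uniformly in $x$. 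Here the lower bound $|\widehat f(m)|\gg e^{-\tau|m|}$ is again essential: with $\gcd(r,s)=1$ it guarantees that the phase $-f^{(rh)}(x+r\theta)+f^{(sh)}(x+s\theta)$ is never constant in $\theta$, and along the Diophantine scales $q_k$ of $\alpha$ it has a Fourier coefficient of modulus $\gg e^{-\tau q_k}/\|q_k\alpha\|$, which is precisely what is needed to beat the slow growth of the Birkhoff sums $f^{(\ell h)}$ and to prevent $T_{c,f}$ from degenerating over the periodic approximants. Carrying this out --- matching the exponential decay rate of $\widehat f$ to the approximation exponents of $\alpha$ so as to extract the required cancellation uniformly in $x$ --- is the technical heart of the argument; the remaining ingredients (the group-extension criteria, the reduction of~\ref{numer:c}, and the Weyl and Davenport inputs) are comparatively routine.
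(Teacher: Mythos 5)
This theorem is not proved in the paper at all: it is quoted from Liu--Sarnak \cite{Liu:2013fk}, and the authors explicitly describe their own Theorem~\ref{thm:main} (generic $\alpha$, smooth $f$) as \emph{complementary} to it, noting that Liu and Sarnak obtain ``for each $\alpha$'' by treating the problem ``in a more direct way than applying Theorem~\ref{tw:kryterium}''. Your proposal follows instead the strategy~\eqref{numer:a}--\eqref{numer:c} that the paper uses for Theorem~\ref{thm:main}, and it inherits exactly the limitation that confines that strategy to a generic set of $\alpha$. The decisive gap is in your ergodicity step. Ergodicity of the $\T^2$-extension $E_\beta$ requires, by~\eqref{cerg}, that $e^{2\pi i(Ah_1+Bh_2)}$ (with $h_1(\theta)=f^{(r)}(a+r\theta)$, $h_2(\theta)=f^{(s)}(b+s\theta)$) is not a \emph{multiplicative} coboundary; your computation only shows that $Ah_1+Bh_2$ is not a real-valued additive $L^2$-coboundary, i.e.\ that $\sum_\mu|\widehat h(\mu)|^2/|e^{2\pi i\mu\alpha}-1|^2=\infty$. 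The implication from the latter to the former is false in general: bridging it is precisely the content of Katok-type criteria (Theorems~\ref{thm:katok} and~\ref{tw:25}), which need the quantitative hypotheses \eqref{eq:28a}--\eqref{eq:29a} tying the continued-fraction approximations of $\alpha$ to the sizes $|\widehat f(q_n)|$ --- hypotheses that hold only for a residual set of $\alpha$ (Corollary~\ref{wn:21-}). A secondary flaw in the same step: divergence of the full series $\sum_\ell|\widehat f(\ell)|^2/\|\ell\alpha\|^2$ does not force divergence of the subseries over $s\nmid\ell$ for all large primes $s$, so even the additive statement is not established as claimed. Finally, your dichotomy is not exhaustive: between ``$f$ is an (analytic) coboundary'' and ``$E_\beta$ is ergodic'' lies the case where some $e^{2\pi i(Ah_1+Bh_2)}$ is a multiplicative coboundary although $f$ is not; there the ergodic components are proper closed subsets of $C_\beta\times\T^2$ and a finer decomposition is needed, exactly as in the paper's sets $J_{c_1,c_2}$ in the affine case, together with a proof that measurable transfer functions are continuous so that ergodic and minimal components coincide (Remark~\ref{uw:9}, Proposition~\ref{tw:top}).

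Conversely, the step you single out as ``the main obstacle'' is not one. Once $E_\beta$ is ergodic for the product Haar measure, unique ergodicity is automatic from Furstenberg's theorem for continuous compact group extensions of uniquely ergodic systems (Proposition~\ref{pr:fur}); no van der Corput differencing or oscillatory-integral estimates are required, and the long sketch you give for that step addresses a non-problem while the genuinely hard point (ergodicity for \emph{every} irrational $\alpha$, including Liouville ones) is left resting on the faulty $L^2$ reduction. As it stands, your argument would at best reprove the paper's Theorem~\ref{thm:main} for analytic $f$; to recover Liu--Sarnak's statement for all $\alpha$ one needs their direct analytic treatment, in which the lower bound $|\widehat f(m)|\gg e^{-\tau|m|}$ plays a quantitative role that the soft ergodic-theoretic scheme cannot reproduce.
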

The technical condition on the Fourier transform, namely $|\widehat{f}(m)|\gg e^{-\tau|m|}$, seems to be necessary for the methods of~\cite{Liu:2013fk} to work. On the other hand, there is no condition on $\alpha$. Moreover, for some $\alpha$s, the result is obtained using Theorem~\ref{tw:kryterium}. Under some additional assumptions, also a quantitative version (i.e.\ concerning the speed of convergence to zero in~\eqref{eq:zero}) of Theorem~\ref{tw:liusar} is proved in~\cite{Liu:2013fk}. This is achieved by treating the problem in a more direct way than applying Theorem~\ref{tw:kryterium}.

A natural question arises whether the strong assumptions on $f$ in Theorem~\ref{tw:liusar} can be relaxed. We do so in the main result of the paper (Theorem~\ref{thm:main} below) to obtain Sarnak's conjecture for each sufficiently smooth $f$  at the cost of reducing ``for each $\alpha$'' in Theorem~\ref{tw:liusar} to ``for a generic  $\alpha$''. Hence, our result can be viewed as a complementary to Liu-Sarnak's result.

\begin{tw}\label{thm:main}
Let $f\colon\R\to\R$ be a function of class $C^{1+\delta}$ for some $\delta>0$, periodic of period $1$. Let $c\in\Z$. Then for a generic set\footnote{The question of whether an analogous result to Theorem~\ref{thm:main} is true for $f$ which is only assumed to be continuous, remains open. We recall that under the continuity assumptions on $f$, even, it is open whether $f$ is not a quasi-coboundary for a generic set of $\alpha$.} of $\alpha$, the automorphism $T_{c,f}$ of $\T^2$ given by~\eqref{eq:1} satisfies Conjecture~\ref{con:1}.
\end{tw}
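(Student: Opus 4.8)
The plan is to verify conditions~\eqref{numer:a},~\eqref{numer:b},~\eqref{numer:c} for $T_{c,f}$ for a generic $\alpha$. First I would isolate the case $c\ne 0$, where $T_{c,f}$ is an affine extension: the fiber map has a linear part $u\mapsto u+cx$ whose iterates are already understood (this is the classical Anzai skew product, ergodic for every irrational $\alpha$), and the extra term $f$ is a smooth perturbation. When $c=0$ the map is a genuine Anzai cocycle $T_f(x,u)=(x+\alpha, u+f(x))$, and the whole question hinges on the cohomological status of $f$ over the rotation by $\alpha$: either $f$ is a coboundary (or quasi-coboundary) in a suitable class, in which case $T_f$ is measure-theoretically a rotation and disjointness follows from the rotation case~\cite{davenport} together with unique ergodicity, or $f$ is not cohomologous to a constant and $T_f$ is uniquely ergodic on $\T^2$ (ergodicity of the skew product implies unique ergodicity here because the system is a group extension of a uniquely ergodic base). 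The point of the $C^{1+\delta}$ assumption together with genericity of $\alpha$ is exactly to exclude the pathological intermediate behaviour: for a generic $\alpha$ (those with sufficiently good Diophantine-type or Liouville-type properties, depending on the Fourier decay of $f$), the cocycle $f-\int f$ is either a smooth coboundary or its Anzai skew product is uniquely ergodic, and in the latter case one computes the ergodic components of $T_f^r\times T_f^s$ explicitly.

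The heart of the argument is therefore a description, for generic $\alpha$, of the invariant measures and ergodic/minimal components of the product cocycle $(T_{c,f})^r\times (T_{c,f})^s$ acting on $\T^4$. I would conjugate to the product system over $\T^2$ with base rotation $(x_1,x_2)\mapsto(x_1+r\alpha,x_2+s\alpha)$ and fiber cocycle $(x_1,x_2)\mapsto\bigl(\text{(linear part in $c$)}+ f^{(r)}(x_1),\ \text{(linear part)}+ f^{(s)}(x_2)\bigr)$ where $f^{(r)}(x)=\sum_{j=0}^{r-1}f(x+j\alpha)$ is the $r$-th Birkhoff sum. Because $r,s$ are coprime (indeed large primes), $\{(r\alpha,s\alpha)\}$ generates a dense subgroup of $\T^2$ for $\alpha$ irrational, so the base is uniquely ergodic; the fiber analysis then splits according to whether the combined $\T^2$-cocycle $(x_1,x_2)\mapsto \bigl(f^{(r)}(x_1),f^{(s)}(x_2)\bigr)$ (mod the linear parts) is cohomologous to a homomorphism into a proper closed subgroup of $\T^2$. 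For generic $\alpha$ one shows this happens only in the ``coboundary'' direction, so that the ergodic components of the $\T^4$-system are either single orbits of a rotation (when $f$ is a coboundary) or the full fibers over $\T^2$-translates, and in all cases they are closed, uniquely ergodic, and fill the space — giving~\eqref{numer:a} and~\eqref{numer:b}. For~\eqref{numer:c}, I would take $\mathcal F$ to consist of characters $e_{k,l}(x,u)=e^{2\pi i(kx+lu)}$; the integral $\int e_{k,l}\otimes\overline{e_{k,l}}\,d\rho$ over an ergodic component is a character evaluated on the base times an oscillatory fiber term, and it vanishes unless $e_{k,l}$ is $T_{c,f}$-invariant, i.e.\ unless $e_{k,l}\circ T_{c,f}=e_{k,l}$ — which, since these characters are linearly dense in $C(\T^2)$, is exactly what is needed. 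The non-invariant characters $e_{k,l}$ with $l\ne 0$ are the only delicate ones, and for these the vanishing of the integral reduces to the fact that $l\bigl(f^{(r)}(x_1)-f^{(s)}(x_2)\bigr)$ plus the linear-in-$c$ part is not an eigenfunction exponent of the $\T^4$ base rotation for $r\ne s$ large, which again follows from the structure of the ergodic components established above.

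The main obstacle I anticipate is precisely the genericity/Diophantine argument controlling the cohomology of $f$ and of its Birkhoff sums $f^{(r)}$ over rotations by $r\alpha$, uniformly over all large primes $r$. One must show that for $\alpha$ in a dense $G_\delta$ set, $f-\int f$ is either a $C^{0}$ (or measurable) coboundary over $x\mapsto x+\alpha$ — hence over $x\mapsto x+r\alpha$ as well, with controlled transfer function — or the Anzai skew product of every $f^{(r)}$ is uniquely ergodic; the classical Baire-category construction of Liouville-type $\alpha$ for which a given smooth $f$ becomes a coboundary has to be made compatible with the requirement that the resulting product systems $(T_{c,f})^r\times(T_{c,f})^s$ have the clean ergodic decomposition demanded by~\eqref{numer:a}. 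I expect this is where the $C^{1+\delta}$ regularity is genuinely used (to get enough Fourier decay $\widehat f(m)=O(|m|^{-1-\delta})$ so that the formal coboundary series converges for sufficiently Liouville $\alpha$), and where the proof that the ergodic decomposition coincides with the decomposition into minimal components — the remark attached to~\eqref{numer:a} — requires the distality of the compact group extension together with the Auslander--Ellis theory~\cite{MR0164335,MR0123636}. Once that structural picture is in place for a generic $\alpha$, assembling~\eqref{numer:a},~\eqref{numer:b},~\eqref{numer:c} and invoking Theorem~\ref{tw:kryterium} with $F(n)=g(T_{c,f}^n(x,u))$, $g\in\mathcal F$, finishes the proof.
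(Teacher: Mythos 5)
Your overall scaffolding (verifying \eqref{numer:a}, \eqref{numer:b}, \eqref{numer:c} and feeding the result into the KBSZ criterion) matches the paper, but two of your central claims are wrong and the technical heart of the proof is missing. First, the base rotation $(x_1,x_2)\mapsto(x_1+r\alpha,x_2+s\alpha)$ is \emph{not} uniquely ergodic on $\T^2$: every point $(nr\alpha,ns\alpha)$ of the orbit of $(0,0)$ satisfies $sx_1=rx_2$, so the orbit closures are the cosets $A_{c_1}=\{(x,y+c_1)\colon sx=ry\}$ of a one-dimensional closed subgroup. This is exactly why the paper must first decompose $\T^4$ into the sets $I_{c_1}=A_{c_1}\times\T^2$ and conjugate the restriction of $(T_{c,f})^r\times(T_{c,f})^s$ to $I_{c_1}$ to the $\T^2$-valued cocycle $x\mapsto(\psi^{(r)}(rx),\psi^{(s)}(sx+c_1))$ over the \emph{single} circle rotation $Tx=x+\alpha$ (Lemmas~\ref{lm:3.2.1} and~\ref{lm:7}); your picture of a uniquely ergodic $\T^2$ base carrying a fiber cocycle cannot be repaired without this step.

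Second, your genericity mechanism runs in the wrong direction. You propose to use Liouville-type $\alpha$ to make $f$ a coboundary (convergence of the formal transfer-function series), with unique ergodicity as the other branch of a dichotomy. The paper instead chooses $\alpha$ with convergents satisfying $\left|\alpha-\frac{p_n}{q_n}\right|q_n/|\widehat f(q_n)|\to0$ precisely so that the Katok-type rigidity result (Theorem~\ref{thm:katok}) forces the cocycle \emph{not} to be a quasi-coboundary. Moreover, what must be excluded is not merely that $f$ is a quasi-coboundary over $T$, but that \emph{any} combination $Af^{(r)}(rx)+Bf^{(s)}(sx+h)$ with $A^2+B^2\neq0$ is one, uniformly over all coprime $r,s$ and all $h$ --- this is what makes each $I_{c_1}$ a single uniquely ergodic component and thereby controls the joinings of $T_{c,f}^r$ with $T_{c,f}^s$. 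Unique ergodicity of $T_f$ alone says nothing about these cross-terms. Establishing this is the entire content of Theorem~\ref{tw:25} (with its estimates on the coefficients $a_{rq_n}$ and the case analysis according to whether $r$ or $s$ divides infinitely many $q_n$), Lemma~\ref{lm:4a} and Corollary~\ref{wn:21-}; none of this appears in your outline, so the proposal as written does not constitute a proof.
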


Before we give the proof of Theorem~\ref{thm:main}, we will first show that Conjecture~\ref{con:1} holds in the following two natural cases: the case of an arbitrary continuous extension of a rational rotation (see Proposition~\ref{p2} below)\footnote{Liu and Sarnak \cite{Liu:2013fk} prove such a result for $f$ smooth.}   and to get an independent proof of the purely affine case (i.e.\ when $f=0$) for each $\alpha$, first proved in a larger context in \cite{Liu:2013fk}.

\begin{tw}[\cite{Liu:2013fk}]\label{thm:affine}
For any $\alpha,\gamma\in\R$ and for any $c\in\Z$, the automorphism $(x,y)\mapsto (x+\alpha,cx+y+\gamma)$ satisfies Conjecture~\ref{con:1}.
\end{tw}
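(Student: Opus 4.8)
The plan is to verify conditions (\ref{numer:a})--(\ref{numer:c}) for $T:=T_{c,0}$ with $T_{c,0}(x,y)=(x+\alpha,cx+y+\gamma)$, using the KBSZ criterion (Theorem~\ref{tw:kryterium}) together with the reduction spelled out in the introduction. We may as well assume $\alpha$ irrational, since the rational case is covered by Proposition~\ref{p2} (to which the statement reduces after passing to a power of $T$). For sufficiently large primes $r\neq s$ one computes
\begin{equation*}
(T^r\times T^s)(x,u,x',u')=(x+r\alpha,\,u+crx+r\gamma+\tbinom{r}{2}c\alpha,\;x'+s\alpha,\,u'+csx'+s\gamma+\tbinom{s}{2}c\alpha),
\end{equation*}
so $T^r\times T^s$ is again an affine automorphism of $\T^4$, of the form $w\mapsto Aw+b$ where $A$ is unipotent and $b$ is an explicit vector. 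Such affine unipotent maps on tori are classically well understood: the orbit closure of any point is a coset of a closed connected subgroup $H\leq\T^4$ on which the map is uniquely ergodic, and distinct orbit closures are disjoint and fill $\T^4$. This immediately gives (\ref{numer:a}) and (\ref{numer:b}); the only point requiring care is to identify, for our specific $A$ and $b$, exactly which subgroups $H$ arise, because that is what one needs for (\ref{numer:c}).

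For (\ref{numer:c}) I would take $\mathcal{F}$ to consist of the characters $\chi_{k,\ell}(x,y)=e^{2\pi i(kx+\ell y)}$, $(k,\ell)\in\Z^2$, which are linearly dense in $C(\T^2)$ and satisfy $\chi_{k,\ell}\circ T\neq\chi_{k,\ell}$ precisely when $(k,\ell)\neq(0,0)$. For such a character, $\chi_{k,\ell}\otimes\overline{\chi_{k,\ell}}$ is the character $\psi$ of $\T^4$ with frequency vector $(k,\ell,-k,-\ell)$, and $\int\psi\,d\rho=0$ for every ergodic (hence every Haar-on-a-coset) measure $\rho$ unless $\psi$ is constant on the relevant coset, i.e.\ unless the frequency vector annihilates the subgroup $H$. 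So the whole matter comes down to checking that, for all sufficiently large primes $r\neq s$, no nonzero frequency vector of the special shape $(k,\ell,-k,-\ell)$ can annihilate any of the orbit-closure subgroups $H$ of $T^r\times T^s$. Equivalently: along every orbit of $T^r\times T^s$, the Birkhoff averages of each $\psi$ of that shape tend to $0$.

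This last verification is where the real work lies, and it is a finite computation with the affine cocycle. Evaluating $\psi$ along the orbit of a point, the linear-in-$n$ contributions from $\ell(u+crx+\dots)-\ell(u'+csx'+\dots)$ produce a quadratic phase in $n$ whose leading coefficient is $c\ell(r^2-s^2)\alpha/2$ modulo lower-order terms; by Weyl's inequality the corresponding exponential sum is $o(N)$ as soon as this coefficient, or the relevant linear coefficient, is irrational, and one checks that for $\ell\neq 0$ and $r\neq s$ large this cannot be killed (here irrationality of $\alpha$ enters, and largeness of $r,s$ rules out the sporadic rational resonances). When $\ell=0$ but $k\neq 0$ the phase is $e^{2\pi ik(x-x')}$ which is constant equal to $e^{2\pi ik(x-x')}\neq$ a function of $n$ only through $k(x+rn\alpha)-k(x'+sn\alpha)=k(x-x')+k(r-s)n\alpha$, a linear phase with irrational increment $k(r-s)\alpha$, again giving $o(N)$. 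I expect the main obstacle to be precisely the bookkeeping of these quadratic Weyl sums: making sure that for \emph{every} nonzero $(k,\ell)$ and \emph{every} pair of large primes $r\neq s$ at least one nonzero Weyl coefficient is irrational, uniformly in the base point $(x,u,x',u')$, so that the averages vanish along every orbit and not merely $\mu$-a.e.
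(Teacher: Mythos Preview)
Your approach---treating $T^r\times T^s$ as a unipotent affine map on $\T^4$ and estimating the exponential sums directly via Weyl's theorem on polynomial phases---is a genuine alternative to the paper's route, and for $c\neq 0$ it works cleanly: when $\ell\neq 0$ the quadratic coefficient $\tfrac{1}{2}c\ell(r^2-s^2)\alpha$ is a nonzero integer times the irrational $\alpha$, so Weyl's criterion gives $o(N)$ for \emph{every} base point, and the case $\ell=0$ is linear with irrational increment $k(r-s)\alpha$. No ``largeness'' of $r,s$ is actually needed. By contrast the paper first reduces to $c\in\{0,1\}$, and for $c=1$ it identifies the ergodic components of $(T^r\times T^s)_\Psi$ explicitly through a coboundary analysis (Proposition~\ref{pr:11}), obtaining the families $I_{c_1}$ and the finer $J_{c_1,c_2}$, and then shows that every nontrivial $\chi\otimes\overline{\chi}$ restricts nontrivially to each component by exhibiting a $\delta$-net inside it (Lemmas~\ref{lm:12}--\ref{lm:13}). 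Your shortcut is lighter but specific to the affine situation; the paper's structural identification of the components is what gets reused in the proof of Theorem~\ref{thm:main}.

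There is, however, a real gap in your argument when $c=0$. Then the quadratic coefficient vanishes and the phase is purely linear with increment $(r-s)(k\alpha+\ell\gamma)$. If $\gamma\in\Q\alpha+\Q$---say $k_0\alpha+\ell_0\gamma=p/q$ with $q>1$---then for every pair of large primes $r\equiv s\pmod q$ (and by Dirichlet there are infinitely many such pairs) the increment lies in $\Z$ and the Birkhoff average of $\chi_{k_0,\ell_0}\otimes\overline{\chi_{k_0,\ell_0}}$ does \emph{not} tend to zero. So your claim that ``largeness of $r,s$ rules out the sporadic rational resonances'' fails precisely here, and condition~(\ref{numer:c}) is not satisfied for this character (which need not be $T$-invariant, so it is not excluded by the hypothesis $g\circ T\neq g$). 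The repair is immediate but should be made explicit: for $c=0$ the map is a rotation on $\T^2$, so Sarnak's conjecture holds by Davenport's theorem; alternatively, for the offending $(k_0,\ell_0)$ the sequence $n\mapsto\chi_{k_0,\ell_0}(T^n(x,u))$ is periodic, and one concludes as in Proposition~\ref{p2}. (A small side remark: the orbit-closure subgroups $H$ need not be connected---take $c=0$, $\gamma=1/2$---so drop that adjective.)
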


Let us now describe how we check conditions~\eqref{numer:a},~\eqref{numer:b} and~\eqref{numer:c}. Let $\alpha\not\in\Q$ and let $Tx=x+\alpha$. In either setting (purely affine or with a non-trivial perturbation) the base rotation $T^r\times T^s$ is the same. Its ergodic components are  obtained by taking the  partition of $\T^2$ into closed invariant sets $A_{c_1}=A_{c_1}^{r,s}:=\{(x,y+c_1)\in\T^2\colon sx=ry\}$, $c_1\in [0,\frac{1}{r})$. In fact, these sets are at the same time the minimal components of $T^r\times T^s$ and they are uniquely ergodic. Thus, we are interested in the action of $(T_{c,f})^r\times (T_{c,f})^s$ on the sets $I_{c_1}=I_{c_1}^{r,s}:=A_{c_1}\times \T^2$. It turns out that this is equivalent to dealing with extensions of $T$ by the following $\T^2$-valued cocycles:
$$
\psi_{c_1}(x)=(\psi^{(r)}(rx),\psi^{(s)}(sx+c_1)),
$$
where $\psi(x)=f(x)+cx$ and $c_1\in [0,\frac{1}{r})$. The cocycle $\psi_{c_1}$ is ergodic if and only if
$$
e^{2\pi i(A\psi^{(r)}(rx)+B\psi^{(s)}(sx+c_1))}\text{ is not a multiplicative coboundary}\footnote{We identify $\T$ with the multiplicative circle $\mathbb{S}^1=\{z\in\C\colon |z|=1\}$. We will use both, the additive and the multiplicative notation, whichever is more convenient for us at the moment. In particular,  $e^{2\pi i x}$ is to be understood multiplicatively and $x$ in the exponent is to be understood additively.}
$$
for $A,B\in \Z$, $A^2+B^2\neq 0$ (see Remark~\ref{uw:7}). This is the situation we aim for in course of the proof of Theorem~\ref{thm:main}. For a generic $\alpha$ we indeed obtain ergodicity of $\psi_{c_1}$ for all $c_1$ -- for the details see Corollary~\ref{wn:21}. Statement~\eqref{numer:b} follows from the fact that we deal with compact group extensions of rotations. Finally, we show that also~\eqref{numer:c} holds: given a non-trivial character $\chi\in\widehat{\T}^2$, we prove that for $r$ and $s$ relatively prime and large enough, the corresponding integrals of $\chi\otimes\overline{\chi}$ are zero. In case of Theorem~\ref{thm:affine}, the problem is in a sense more delicate -- some of the sets $I_{c_1}$ are too large to be the ergodic components  and they need to be partitioned further into smaller subsets. This refined partition will be however satisfying~\eqref{numer:a}. Condition~\eqref{numer:b} is proved in the same way as in Theorem~\ref{thm:main}. To prove that also~\eqref{numer:c} holds, we take again $\mathcal{F}=\widehat{\T}^2$.

For reader's convenience, we added Appendix collecting some necessary facts concerning cocycles.

\section{Results}
\subsection{On the strategy of the proofs}
Our approach to proving disjointness with the \mobius function was described in~\eqref{numer:a},~\eqref{numer:b} and~\eqref{numer:c}. We will now make some more comments on this method. Recall that in view of Theorem~\ref{tw:kryterium}, for a linearly dense set $\mathcal{F}$ of $g\in C(X)$ and each $x\in X$, what we want to prove is
\begin{equation}\label{e1}
\frac1N\sum_{n\leq N} g(T^{rn}x)\overline{g(T^{sn}x)}\to 0 \mbox{ as } N\to\infty
\end{equation}
for distinct, sufficiently large prime numbers $r,s$. When this is realized through \eqref{numer:a}, \eqref{numer:b} and~\eqref{numer:c}, we prove more. Namely, for each $g\in \mathcal{F}$ and for sufficiently large primes $r\neq s$, the following holds for all $x,y\in X$:
\begin{equation}\label{7a}
\frac{1}{N}\sum_{n\leq N}g(T^{rn}x)\overline{g(T^{sn}y)}\to 0\text{ as }N\to \infty
\end{equation}
(the condition on $r,s$ is independent of the choice of $x$ and $y$).

\begin{uw}\label{uw:13}
\noindent
\begin{enumerate}[(i)]
\item\label{rmi}
In view of the above discussion, in order to prove that Sarnak conjecture holds for $T$, it suffices to check conditions~\eqref{numer:a},~\eqref{numer:b} and~\eqref{numer:c} for $T^k$ for some $k\geq 1$. Indeed, by applying~\eqref{7a} to $T^k$, we obtain
\begin{multline*}
\sum_{n\leq N}g(T^{r(kn+j)}x)\overline{g(T^{s(kn+j)}x)}=\\
=\sum_{n\leq N}g((T^k)^{rn}T^{rj}x)\overline{g((T^k)^{sn}T^{sj}x)}={\rm o}(N)\text{ for }0\leq j<k,
\end{multline*}
which implies that~\eqref{e1}  holds for $T$.
\item
Notice that whenever the conditions~\eqref{numer:a},~\eqref{numer:b} and~\eqref{numer:c}  are satisfied for some homeomorphism $T$ then they are also satisfied for $T^{-1}$. However,
\item
It is unclear how to prove directly that if $T^{k}$ for some $k\in\Z\setminus\{0\}$ is disjoint from $\bmu$ then also $T$ is disjoint from $\bmu$, or even to show that if the assumptions of the KBSZ criterion are satisfied for $T^{k}$ for some $k\in\Z\setminus\{0\}$ then they are satisfied for $T$.
\end{enumerate}
\end{uw}

Let now $T_\varphi\colon X\times \T\to X\times \T$ be a continuous circle group extension of a homeomorphism $T\colon X\to X$ by $\varphi\colon X\to \T$.
\begin{uw}\label{uw:14}
Suppose that $T_\varphi$ satisfies Sarnak's conjecture. Then for any $k\geq 1$ also $T_{k\varphi}$ satisfies Sarnak's conjecture as it is a topological factor of $T_\varphi.$
\end{uw}

In the case of affine extensions of rotations, Remark~\ref{uw:13}~\eqref{rmi} and Remark~\ref{uw:14} are complementary in the following sense. Let $T^{(\alpha)}$ stand for the rotation $T^{(\alpha)}x=x+\alpha$ and let $\psi(x)=x$. Then $(T^{(\frac{\alpha}{k})}_\psi)^k=T^{(\alpha)}_{k\psi+\frac{k-1}{2}\alpha}$ and, by Remark~\ref{uw:13}~\eqref{rmi}, the following implication holds:
$$
\text{\eqref{numer:a},~\eqref{numer:b} and~\eqref{numer:c} hold for }T^{(\alpha)}_{k\psi+\frac{k-1}{2}\alpha}\Rightarrow T^{(\frac{\alpha}{k})}_\psi \text{ satisfies Conjecture~\ref{con:1}}.
$$

\subsection{General remarks}
From now on, our assumption will be that
\begin{equation*}
\text{$r,s\geq 3$ are odd and relatively prime.}
\end{equation*}
Let $\alpha\not\in\Q$ and denote by $T\colon \T\to \T$ the irrational rotation $Tx=x+\alpha$. For $c_1\in [0,\frac{1}{r})$ let
\begin{equation}\label{eq:i}
I_{c_1}=I_{c_1}^{r,s}:=A_{c_1}\times \T^2\text{, where }A_{c_1}=A_{c_1}^{r,s}:=\{(x,y+c_1)\in\T^2\colon sx=ry\}.\footnote{Whenever $r,s$ are fixed, we will write $I_{c_1}$ and $A_{c_1}$. For $r,s$ varying, we will use $I_{c_1}^{r,s}$ and $A_{c_1}^{r,s}$.}
\end{equation}

\begin{lm}\label{lm:3.2.1}
The decomposition of $\T^2$ into minimal components of $T^r\times T^s$ consists of sets $A_{c_1}$, $c_1\in [0,\frac{1}{r})$. It is the same as the ergodic decomposition.
Moreover, $(T^r\times T^s)|_{A_{c_1}}$ is topologically isomorphic to $T$. The isomorphism is given by
\begin{equation}\label{eq:wzorw}
W=W_{c_1}\colon A_{c_1}\to\T,\ W(x,y+c_1)=ax+by,
\end{equation}
where $a,b\in\Z$ are such that $ar+bs=1$.
\end{lm}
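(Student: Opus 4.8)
The plan is to analyze the rotation $T^r\times T^s$ on $\T^2$ directly, exploiting that $r,s$ are coprime. First I would identify the invariant sets. Since $(T^r\times T^s)(x,y)=(x+r\alpha,y+s\alpha)$, the combination $sx-ry$ is invariant: $s(x+r\alpha)-r(y+s\alpha)=sx-ry$. Hence $\T^2$ is foliated by the level sets $\{sx-ry=\text{const}\}$. Because $\gcd(r,s)=1$, the map $(x,y)\mapsto sx-ry$ on $\T^2$ is surjective onto $\T$ with kernel $\{sx=ry\}$, and one checks this kernel is the single closed subgroup $\{(bt,at)\colon t\in\T\}$ where $ar+bs=1$ (using that $sx=ry$ forces $x=bt$, $y=at$ with $t=ar x+bsy$... more precisely $t := ax+by$ parametrizes it and $sx-ry=s(bt)-r(at)=(bs-ar)t$, so I must be careful: actually the level set $A_{c_1}$ in the statement is indexed so that $c_1$ runs over a fundamental domain of size $1/r$ — this reflects that the cosets of the subgroup $\{sx=ry\}$ are parametrized, after the identification, by $c_1\in[0,1/r)$). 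So the first step is: write down the subgroup $H=\{(x,y)\colon sx=ry\}$, show $H\cong\T$ via $t\mapsto$ the appropriate generator, enumerate its cosets, and observe each coset is $T^r\times T^s$-invariant and closed, giving the partition into the $A_{c_1}$.

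Second, I would prove that on each $A_{c_1}$ the action is topologically conjugate to $T$. The candidate conjugacy is $W(x,y+c_1)=ax+by$ with $ar+bs=1$. I need: (i) $W$ is a well-defined homeomorphism $A_{c_1}\to\T$ — surjectivity and injectivity follow from $\gcd(r,s)=1$ together with the defining relation $sx=ry$ on $A_{c_1}$; (ii) $W$ intertwines the actions: $W((T^r\times T^s)(x,y+c_1))=a(x+r\alpha)+b(y+s\alpha)=ax+by+(ar+bs)\alpha=W(x,y+c_1)+\alpha$. This is a one-line computation once $W$ is set up correctly. From this conjugacy, since $\alpha\notin\Q$ implies $T$ is minimal and uniquely ergodic, each $A_{c_1}$ is a minimal, uniquely ergodic component of $T^r\times T^s$; minimality of the pieces plus the fact that they partition $\T^2$ into closed invariant sets gives both the ``decomposition into minimal components'' and, via unique ergodicity, that this coincides with the ergodic decomposition.

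Third, I would tie up the indexing claim — that $c_1$ ranges over $[0,1/r)$ rather than all of $[0,1)$. The point is that the coset of $H$ through $(0,c_1)$ and through $(0,c_1')$ coincide precisely when $c_1-c_1'$ lies in the projection of $H$ to the second coordinate, which is the subgroup $\frac1r\Z/\Z$ (since $sx=ry$ with $x\in\T$ forces $y\in\frac1r\Z/\Z$... again one must check this against the $\gcd$ condition, but it comes out to $\frac1r\Z/\Z$). Hence distinct cosets are parametrized exactly by $c_1\in[0,1/r)$, matching~\eqref{eq:i}.

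I expect the main obstacle to be purely bookkeeping rather than conceptual: getting the parametrization of $H$ and its cosets exactly right — i.e.\ confirming that the relation $sx=ry$ on the torus (an equation mod $1$, not just in $\R$) cuts out precisely a circle subgroup and that its second-coordinate projection is $\frac1r\Z/\Z$, so that the fundamental domain is $[0,1/r)$ — and checking carefully that $W$ is a genuine bijection on each coset. Once the group-theoretic picture is pinned down, the intertwining identity $W\circ(T^r\times T^s)=T\circ W$ and the transfer of minimality/unique ergodicity from $T$ are immediate. No hard analysis is involved; the care is entirely in the modular arithmetic on $\T^2$.
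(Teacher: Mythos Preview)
Your approach is essentially identical to the paper's: verify that the sets $A_{c_1}$ are closed and $T^r\times T^s$-invariant, check the intertwining identity $W\circ(T^r\times T^s)=T\circ W$ via $ar+bs=1$, and deduce minimality and unique ergodicity from the corresponding properties of $T$. One terminological slip to fix: in your indexing step you want the intersection $H\cap(\{0\}\times\T)=\{(0,c):rc=0\}=\{0\}\times\tfrac1r\Z/\Z$, not the \emph{projection} of $H$ to the second coordinate (which is all of $\T$); with that correction your coset count $c_1\in[0,1/r)$ is right.
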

\begin{proof}
Notice first that the sets $A_{c_1}$ are closed and invariant under $T^r\times T^s$ and $\cup_{c_1\in[0,\frac{1}{r})}A_{c_1}=\T^2$. Fix $c_1\in[0,\frac{1}{r})$, let $a,b\in\Z$ be such that $ar+bs=1$ and let $W$ be given by~\eqref{eq:wzorw}. Then $W\circ (T^r\times T^s)|_{A_{c_1}}=T\circ W$. For $(x,y+c_1)\in A_{c_1}$, we have $r(ax+by)=x$ and $s(ax+by)=y$. Therefore, $W$ is bijective. Moreover, $W$ preserves the measure, as rotations are uniquely ergodic.
\end{proof}

For a measurable function $\psi\colon \T\to\T$, let $\Psi\colon \T^2\to\T^2$ be given by
\begin{equation}\label{eq:zmiana}
\Psi(x,y)=(\psi^{(r)}(x),\psi^{(s)}(y)).
\end{equation}
Then clearly the automorphism $(T_\psi)^r\times(T_\psi)^s$ is topologically isomorphic to $(T^r\times T^s)_{\Psi}$.
\begin{lm}\label{lm:7}
For $c_1\in [0,\frac{1}{r})$ the sets $I_{c_1}$ are invariant under $(T^r\times T^s)_\Psi$. Moreover, $(T^r\times T^s)_\Psi|_{I_{c_1}}$ is topologically isomorphic to $T_{\psi_{c_1}}$, where $\psi_{c_1}(x)=(\psi^{(r)}(rx),\psi^{(s)}(sx+c_1))$. The isomorphism is given by
\begin{equation}\label{eq:wzorv}
V=V_{c_1}\colon I_{c_1}\to\T^3,\ V(x,y+c_1,u,v)=(ax+by,u,v),
\end{equation}
with $a,b\in\Z$ such that $ar+bs=1$.
\end{lm}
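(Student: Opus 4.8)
The plan is to bootstrap from Lemma~\ref{lm:3.2.1}: the map $V=V_{c_1}$ in~\eqref{eq:wzorv} is simply the isomorphism $W=W_{c_1}$ of~\eqref{eq:wzorw} with the two fibre coordinates $(u,v)$ carried along unchanged, so that under the identification $I_{c_1}=A_{c_1}\times\T^2$ we have $V=W_{c_1}\times\mathrm{id}_{\T^2}$. Recall also that $\Psi$ defined in~\eqref{eq:zmiana} is precisely the fibre cocycle realizing $(T_\psi)^r\times(T_\psi)^s$, so that $(T^r\times T^s)_\Psi$ sends $(x,y,u,v)$ to $(x+r\alpha,\,y+s\alpha,\,u+\psi^{(r)}(x),\,v+\psi^{(s)}(y))$. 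With this in place there are three things to check: that $I_{c_1}=A_{c_1}\times\T^2$ is invariant, that $V$ intertwines $(T^r\times T^s)_\Psi|_{I_{c_1}}$ with $T_{\psi_{c_1}}$, and that $V$ is a homeomorphism.

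Invariance is immediate: $A_{c_1}$ is $(T^r\times T^s)$-invariant by Lemma~\ref{lm:3.2.1}, and the fibre part of $I_{c_1}$ is all of $\T^2$, so $I_{c_1}$ is carried into itself. For the intertwining relation $V\circ(T^r\times T^s)_\Psi|_{I_{c_1}}=T_{\psi_{c_1}}\circ V$ I would chase a point $(x,y+c_1,u,v)$ with $sx=ry$ around both ways: the left-hand side yields the triple with base coordinate $ax+by+(ar+bs)\alpha=ax+by+\alpha$ and fibre coordinates $u+\psi^{(r)}(x)$, $v+\psi^{(s)}(y+c_1)$, while the right-hand side yields base coordinate $(ax+by)+\alpha$ and fibre coordinates $u+\psi^{(r)}(r(ax+by))$, $v+\psi^{(s)}(s(ax+by)+c_1)$. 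These agree because, exactly as in the proof of Lemma~\ref{lm:3.2.1}, the relations $ry=sx$ and $ar+bs=1$ give $r(ax+by)=x$ and $s(ax+by)=y$ on $A_{c_1}$. Finally, $V$ is a homeomorphism with inverse $(w,u,v)\mapsto(rw,\,sw+c_1,\,u,\,v)$, continuity in both directions being inherited from $W_{c_1}$.

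I do not expect a genuine obstacle; the argument is essentially bookkeeping. The one point that needs care is tracking where the second cocycle is evaluated: $\psi^{(s)}$ acts on the second $\T$-coordinate of $\T^2$, which on $A_{c_1}$ equals $y+c_1$ rather than $y$, and it is precisely this shift that is absorbed into the ``$+c_1$'' appearing in $\psi_{c_1}(x)=(\psi^{(r)}(rx),\psi^{(s)}(sx+c_1))$. (If $\psi$ is merely measurable the same computation shows $V$ is a measure-preserving isomorphism; in the continuous setting it is a topological one, which is what is used in the sequel.)
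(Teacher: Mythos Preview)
Your proposal is correct and follows essentially the same route as the paper: both proofs verify the intertwining relation $V\circ(T^r\times T^s)_\Psi=T_{\psi_{c_1}}\circ V$ by direct computation on a point $(x,y+c_1,u,v)$, reducing to the identities $r(ax+by)=x$ and $s(ax+by)=y$ already established in Lemma~\ref{lm:3.2.1}. You are slightly more explicit about invariance and about the inverse of $V$ (which the paper records separately as Remark~\ref{odwrv}), but the argument is the same.
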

\begin{proof}
Fix $c_1\in[0,\frac{1}{r})$, let $a,b\in\Z$ be such that $ar+bs=1$ and let $V$ be given by~\eqref{eq:wzorv}. Then
\begin{align*}
V\circ (T^r\times T^s)_{\Psi}&(x,y+c_1,u,v)\\
&=V(x+r\alpha,y+s\alpha+c_1,u+\psi^{(r)}(x),v+\psi^{(s)}(y+c_1))\\
&=(ax+by+\alpha,u+\psi^{(r)}(x),v+\psi^{(s)}(y+c_1))
\end{align*}
and
\begin{align*}
T_{\psi_{c_1}}\circ &V(x,y+c_1,u,v)=T_{\psi_{c_1}}(ax+by,u,v)\\
&=(ax+by+\alpha,u+\psi^{(r)}(r(ax+by)),v+\psi^{(s)}(s(ax+by)+c_1)).
\end{align*}
Moreover, $r(ax+by)=x\text{ and }s(ax+by)=y$, which completes the proof.
\end{proof}

\begin{uw}\label{odwrv}
The inverse of $V_{c_1}\colon I_{c_1}\to\T^3$ is given by
$$
V_{c_1}^{-1}(t,u,v)=(rt,st+c_1,u,v).
$$
Moreover, $art+bst=t$ for $a,b\in\Z$ such that $ar+bs=1$.
\end{uw}

\subsection{Continuous extensions of rational rotations}
We will now show that all continuous extensions of rational rotations satisfy Sarnak's conjecture.

\begin{lm}\label{l1} Assume that $S(x,y)=(x,f(x)+y)$ with $f\colon\T\to\T$ continuous. Then for each $(x_i,y_i)\in\T^2$, $i=1,2$, $\chi\in\widehat{\T^2}$ we have
\begin{equation}\label{e7}
\frac1N\sum_{n\leq N} \chi(S^{rn}(x_1,y_1))\ov{\chi(S^{sn}(x_2,y_2))}\to 0\;\;\mbox{as}\;\;N\to\infty\end{equation}
for sufficiently large prime numbers $r\neq s$, whenever $\chi\neq \eta\ot \mathbbm{1}_{\T}$ ($\eta\in\widehat{\T}$) and $f(x_1)\notin\Q$ or $f(x_2)\notin\Q$.\end{lm}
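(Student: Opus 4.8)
The plan is to reduce \eqref{e7} to an elementary arithmetic statement about Weyl sums. Write $\chi(x,y)=e^{2\pi i(kx+\ell y)}$ with $(k,\ell)\in\Z^2$; the assumption $\chi\neq\eta\ot\1_\T$ (for $\eta\in\widehat{\T}$) is exactly the condition $\ell\neq 0$. Since $S^n(x,y)=(x,y+nf(x))$, a direct computation absorbing the contribution of $x_i,y_i$ into a constant gives
$$
\chi(S^{rn}(x_1,y_1))\,\ov{\chi(S^{sn}(x_2,y_2))}=C\cdot e^{2\pi i n\ell(rf(x_1)-sf(x_2))},
$$
where $C=C(\chi,x_1,y_1,x_2,y_2)$ has modulus $1$. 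Recalling that $\frac1N\sum_{n\le N}e^{2\pi i n\theta}\to 0$ precisely when $\theta\notin\Z$, we see that \eqref{e7} holds as soon as $\ell(rf(x_1)-sf(x_2))\notin\Z$, so the lemma reduces to the following: there is $P=P(\chi,x_1,x_2)$ such that $\ell(rf(x_1)-sf(x_2))\notin\Z$ for all primes $r\neq s$ with $r,s>P$.

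Set $a:=f(x_1)$ and $b:=f(x_2)$. Modulo $\Z$ the quantity $\ell(ra-sb)$ is invariant under simultaneously swapping $(r,a)\leftrightarrow(s,b)$, so by the hypothesis ``$a\notin\Q$ or $b\notin\Q$'' we may assume $a\notin\Q$. If moreover $b\in\Q$, say $b=p/q$, then $\ell(ra-sb)\in\Z$ would force $\ell ra\in\Z+\ell sp/q\subseteq\tfrac1q\Z$, hence $a\in\Q$, a contradiction; thus in this case $\ell(ra-sb)\notin\Z$ for every $r,s$ and any $P$ works. The remaining, main case is $a,b\notin\Q$. Here I would show that the set $E:=\{(r,s):r\neq s\text{ primes},\ \ell(ra-sb)\in\Z\}$ has at most one element: given two such pairs $(r,s)$ and $(r',s')$, eliminating $a$ from the relations $\ell(ra-sb)\in\Z$ and $\ell(r'a-s'b)\in\Z$ (multiply by $r'$ and by $r$, then subtract) yields $\ell b(rs'-r's)\in\Z$, and since $b\notin\Q$ this forces $rs'=r's$, which by unique factorization together with $r\neq s$ and $r'\neq s'$ forces $(r,s)=(r',s')$. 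Once $|E|\le 1$, taking $P$ larger than every prime occurring in the (at most one) element of $E$ completes the argument.

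The only step requiring a small idea is this two–pair elimination in the last case; everything else is a routine unwinding of the definition of $S$ together with the elementary fact about Cesàro averages of characters. In particular, unlike the main theorem, no equidistribution or ergodic-theoretic input is needed here, the statement being purely arithmetic once the Weyl-sum reduction is in place, and beyond $r,s$ being distinct primes the standing hypotheses on $r,s$ are not used.
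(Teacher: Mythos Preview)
Your proof is correct and follows essentially the same route as the paper's: compute $S^m(x,y)=(x,y+mf(x))$, reduce \eqref{e7} to the Weyl sum $\frac1N\sum_{n\le N}e^{2\pi i n\ell(rf(x_1)-sf(x_2))}$, and then argue that the exponent is irrational for all sufficiently large prime pairs by a two-pair elimination when both $f(x_i)$ are irrational. The paper states the uniqueness of the bad pair slightly differently (asserting at most one coprime pair with $rf(x_1)-sf(x_2)\in\Q$ rather than your $\ell(ra-sb)\in\Z$), but your explicit elimination via $rs'=r's$ and unique factorization is exactly the missing justification for that assertion.
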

\begin{proof}
Note that for each $m\geq1$, $$S^m(x,y)=
(x,mf(x)+y).$$
Let $\chi(x,y)=e^{2\pi i(ax+by)}$ for some $a,b\in\Z$, with $b\neq0$ by assumption. Hence
\begin{multline*}
\frac1N\sum_{n\leq N} \chi(S^{rn}(x_1,y_1))\ov{\chi(S^{sn}(x_2,y_2))}\\
=e^{2\pi i(a(x_1-x_2)+b(y_1-y_2))}\frac1N\sum_{n\leq N}e^{2\pi ib(rf(x_1)-sf(x_2))n}.
\end{multline*}
If exactly one of the numbers $f(x_1)$ and $f(x_2)$ is irrational then the result follows from Weyl's criterion. If both $f(x_1)$ and $f(x_2)$ are irrational then there is at most one pair $(r,s)$ of relatively prime numbers such that $rf(x_1)-sf(x_2)\in \Q$ and we can again make use of Weyl's criterion, this time for $r,s$ sufficiently large.
\end{proof}

\begin{uw}\label{r2}
Notice that the above proof says more. Namely, the convergence in~\eqref{e7} does not depend on $y_1,y_2$.\end{uw}

\begin{pr}\label{p2}
Each continuous extension $R(x,y)=(x+\frac pq,f(x)+y)$ of a rational rotation $x\mapsto x+p/q$ satisfies Sarnak's conjecture.
\end{pr}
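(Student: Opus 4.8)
The plan is to reduce $R(x,y)=(x+p/q,f(x)+y)$ to a situation covered by Lemma~\ref{l1} by passing to the $q$-th power. Write $q=q'q''$ where we absorb common factors; concretely, consider $R^q(x,y)=(x,qf(x)+y)=(x,S_0(x,y))$ where $S_0$ has the form treated in Lemma~\ref{l1} with the function $qf$ in place of $f$. By Remark~\ref{uw:13}\eqref{rmi}, it suffices to verify conditions \eqref{numer:a}, \eqref{numer:b}, \eqref{numer:c} for $R^q$, i.e.\ to establish the KBSZ input \eqref{7a} for $R^q$ and a linearly dense family of continuous functions, namely the characters $\chi\in\widehat{\T^2}$. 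Thus the goal becomes: for each $\chi\in\widehat{\T^2}$ and all large primes $r\neq s$, show
\[
\frac1N\sum_{n\leq N}\chi\bigl((R^q)^{rn}(x_1,y_1)\bigr)\overline{\chi\bigl((R^q)^{sn}(x_2,y_2)\bigr)}\to 0
\]
for all $(x_i,y_i)$, except when $\chi$ is trivial or of the form $\eta\ot\1_\T$, in which case $\chi\circ R=\chi$ and the function is $R$-invariant (so it is excluded in \eqref{numer:c}).

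The main case is $\chi=\eta\ot\1_\T$ nontrivial, i.e.\ $\chi(x,y)=e^{2\pi i a x}$ with $a\neq0$: here $\chi\circ R\ne\chi$ in general (it changes $x$ by $p/q$), so this character is \emph{not} excluded and must be handled. But $\chi((R^q)^n(x,y))=e^{2\pi i a x}$ is constant in $n$, so the ergodic average equals $e^{2\pi i a(x_1-x_2)}$, which need not vanish. This shows $R^q$ alone is too coarse; one must instead argue with $R$ directly for this particular character. For $\chi=\eta\ot\1$ with $a\ne 0$, on $R$ itself we get $\chi(R^{rn}(x_1,y_1))\overline{\chi(R^{sn}(x_2,y_2))}=e^{2\pi i a(x_1-x_2)}e^{2\pi i a(r-s)np/q}$, and since $r,s$ are coprime we can choose them so that $(r-s)p/q\notin\Z$ (possible once we know, say, $p/q\notin\Z$, which we may assume, else the map is a genuine extension of the identity and Lemma~\ref{l1} applies after one power), whence Weyl's criterion gives decay. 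So the strategy is: split $\widehat{\T^2}$ into the characters factoring through the first coordinate (handled on $R$ via the rational rotation being a Kronecker system disjoint from $\bmu$, or directly by Weyl as above) and the remaining characters $\chi=e^{2\pi i(ax+by)}$ with $b\neq0$ (handled on $R^q$ via Lemma~\ref{l1} applied to $qf$).

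For the latter, the remaining subtlety is the hypothesis in Lemma~\ref{l1} that $f(x_1)$ or $f(x_2)$ be irrational: if $qf(x_1),qf(x_2)$ are both rational for some points, the inner sum $\frac1N\sum e^{2\pi i b(r\cdot qf(x_1)-s\cdot qf(x_2))n}$ is an average of a periodic sequence and need not vanish. I would handle this by noting that along the $R$-orbit the relevant quantity is $\sum_{j} f(x_i+jp/q)$, a finite sum, and by a further passage to a power (replacing $R^q$ by $R^{Q}$ for a suitable $Q$) or by directly analyzing the finite cyclic group on which $x$ lives modulo the orbit of the rational rotation; on each such finite orbit the extension is by a homomorphism into $\T$, hence the system is a compact group rotation on $\T\times\T$ (or a finite extension thereof) and is a Kronecker system, for which Sarnak's conjecture follows from the rotation case \cite{davenport}. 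In fact the cleanest route is to observe that $R$ is a \emph{distal} homeomorphism of $\T^2$ whose unique ergodic components, by Lemma~\ref{l1} and Remark~\ref{r2}, are uniquely ergodic rotations; so \eqref{numer:a}--\eqref{numer:c} hold with $\mathcal F=\widehat{\T^2}$, the only points to check being exactly the character computations above. The main obstacle is precisely the bookkeeping of the exceptional ``rational value'' orbits: showing that on those orbits the system still decomposes into uniquely ergodic pieces disjoint from $\bmu$, which I expect to dispatch by reducing to the finite-orbit case and invoking the classical rotation result.
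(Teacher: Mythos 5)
Your overall strategy coincides with the paper's: pass to $R^q$, apply Lemma~\ref{l1} to the generic characters and points, and treat the characters $\eta\ot\1_{\T}$ and the ``rational value'' orbits separately by reducing to finite systems. Three points, however, need repair. First, $R^q(x,y)\neq (x,qf(x)+y)$; the correct identity is $R^q(x,y)=(x,f_q(x)+y)$ with $f_q(x)=f(x)+f(x+\frac1q)+\dots+f(x+\frac{q-1}q)$ (the Birkhoff sum along the rational rotation, which you do write down later but never reconcile with your opening formula). This is not cosmetic: the dichotomy in Lemma~\ref{l1} is then governed by rationality of $f_q(x_i)$, and the fact that makes the exceptional case tractable is that $f_q$ is \emph{constant on each orbit} of $x\mapsto x+p/q$, so rationality at one point of the orbit forces it at all of them; with $qf$ in place of $f_q$ your case analysis does not close.

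Second, for $\chi=\eta\ot\1_{\T}$ with $\eta$ nontrivial, the KBSZ route cannot be salvaged by ``choosing'' $r,s$: Theorem~\ref{tw:kryterium} requires \eqref{eq:kryterium} for \emph{every} pair of sufficiently large primes, and for the infinitely many pairs with $r\equiv s\ (\mathrm{mod}\ q)$ your correlation equals $e^{2\pi i a(x_1-x_2)}\neq 0$ no matter how large $r,s$ are. The correct treatment --- which you mention only parenthetically --- is that such $\chi$ factors through the finite rotation $x\mapsto x+p/q$, so $\sum_{n\leq N}\chi(R^n(x,y))\bmu(n)$ is a finite linear combination of sums of $\bmu$ over arithmetic progressions and hence ${\rm o}(N)$; this is what the paper invokes as Sarnak's conjecture for finite systems. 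Third, your finite-orbit idea for the case $f_q(x)\in\Q$ is the right one and is exactly what the paper executes: writing $f_q(x)=c/d$, every point of the orbit is $R^{qd}$-periodic, and decomposing $n=qdn''+(kq+j)$ reduces \eqref{eq:zero} once more to $\bmu$ on arithmetic progressions. As written, though, this step is only asserted (``I expect to dispatch\dots''), and the intermediate appeal to ``a compact group rotation on $\T\times\T$'' is off target: on these exceptional orbits the relevant subsystem is finite, not an infinite Kronecker system on $\T^2$.
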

\begin{proof}We need to check~\eqref{eq:zero} only for $F=\chi\in\widehat{\T^2}$.
First, note that \begin{equation}\label{e9}R^q(x,y)=(x,f_q(x)+y),\end{equation} where
$f_q(x)=f(x)+f(x+\frac1q)+\ldots+f(x+\frac{q-1}q)$.
Given $n\geq1$, we take $n'$ such that $n=qn'+j$ with $0\leq j<q$. Then,
for each $\chi\in\widehat{\T^2}$, we have
$$
\chi(R^{rn}(x,y))\ov{\chi(R^{sn}(x,y))}=
\chi(R^{qrn'}(R^{rj}(x,y)))\ov{\chi(R^{qsn'}(R^{sj}(x,y)))},$$
where the first coordinates of the points $R^{rj}(x,y),R^{rj}(x,y)$ belong to the finite set $\{x,x+\frac1q,\ldots,x+\frac{q-1}q\}$ (hence do not depend on $r,s$). Hence, to show
$$
\frac1N\sum_{n\leq N} \chi(R^{rn}(x,y))\ov{\chi(R^{sn}(x,y))}\to0$$
for sufficiently large prime numbers $r\neq s$, we need to show that
$$
\frac1N\sum_{n'\leq N/q} \chi(R^{qrn'}(x_1,\ast))\ov{\chi(R^{qsn'}(x_2,\ast))}\to0,$$
for $x_1,x_2\in\{x,x+1/q,\ldots,x+\frac{q-1}q\}$.
This is the case by Lemma~\ref{l1}, Remark~\ref{r2} and~\eqref{e9}, provided that $\chi\neq\eta\ot \mathbbm{1}_{\T}$ and $f_q(x_1)\not\in\Q$ or $f_q(x_2)\notin\Q$.

If $\chi=\eta\ot \mathbbm{1}_{\T}$, \eqref{eq:zero} follows from Sarnak's conjecture for finite systems. Suppose now that $f_q(x+jrp/q),f_q(x+jsp/q)\in\Q$. This is possible only if $f_q(x)\in\Q$
since $f_q(\cdot)$ is constant on the orbit of $x$ under the rotation $x\mapsto x+\frac{p}q$. Moreover, if $n=qn'+j$ with $0\leq j<q$ then
\begin{equation}\label{e10}
f^{(n)}(x):=\sum_{i=0}^{n-1}f(x+ip/q)=f^{(j)}(x)+f^{(qn')}(x+jp/q)=
f^{(j)}(x)+n'f_q(x).
\end{equation}
It follows that
\begin{align*}
\frac1N&\sum_{n\leq N}\chi(R^n(x,y))\mob(n)\\
&=\sum_{j=0}^{q-1}\frac1N\sum_{n'\leq N/q}\chi(x+(qn'+j)p/q,f^{(j)}(x)+n'f_q(x)+y)\mob(qn'+j)\\
&=\sum_{j=0}^{q-1}\frac1N\sum_{n'\leq N/q}\chi(x+jp/q,f^{(j)}(x)+n'f_q(x)+y))\mob(qn'+j)\\
&=\sum_{j=0}^{q-1}e^{2\pi i(a(x+jp/q)+bf^{(j)}(x))}\frac1N\sum_{n'\leq N/q}e^{2\pi ibf_q(x)n'}\mob(qn'+j)\\
&=\sum_{j=0}^{q-1}e^{2\pi i(a(x+jp/q)+bf^{(j)}(x))}\frac1N\sum_{n'\leq N/q}e^{2\pi ib\frac{c}{d}n'}\mob(qn'+j),
\end{align*}
%$$
%\frac1N\sum_{n\leq N}\chi(R^n(x,y))\mob(n)=
%$$
%$$
%\sum_{j=0}^{q-1}\frac1N\sum_{n'\leq N/q}\chi(x+(qn'+j)p/q,f^{(j)}(x)+n'f_q(x)+y)\mob(qn'+j)=
%$$
%$$
%\sum_{j=0}^{q-1}\frac1N\sum_{n'\leq N/q}\chi(x+jp/q,f^{(j)}(x)+n'f_q(x)+y))\mob(qn'+j)=
%$$
%$$
%\sum_{j=0}^{q-1}e^{2\pi i(a(x+jp/q)+bf^{(j)}(x))}\frac1N\sum_{n'\leq N/q}e^{2\pi ibf_q(x)n'}\mob(qn'+j)=
%$$
%$$
%\sum_{j=0}^{q-1}e^{2\pi i(a(x+jp/q)+bf^{(j)}(x))}\frac1N\sum_{n'\leq N/q}e^{2\pi ib\frac{c}{d}n'}\mob(qn'+j),
%$$
where $\chi(x,y)=e^{2\pi i(ax+by)}$, $f_q(x)=\frac{c}{d}$ with $a,b,c,d\in\Z$, $b\neq 0$ and $d>0$. By seting $n'=dn^{\prime\prime}+k$ with $0\leq k<d$, we obtain $qn'+j=qdn^{\prime\prime}+(qk+j)$ and rewriting the sum to the form
$$
\sum_{j=0}^{q-1}\sum_{k=0}^{d-1} \frac1N\sum_{n^{\prime\prime}\leq N/(dq)} A_{j,k}\mob(qdn^{\prime\prime}+kq+j),
$$
the result again follows from Sarnak's conjecture for finite systems.
\end{proof}

\subsection{Affine case}
Recall that we are interested in the disjointness with the \mobius function of $(x,y)\mapsto (x+\alpha,cx+y+\gamma)$, where $c\in\Z$ and $\alpha,\gamma\in\R$.

\begin{uw}
If $c\neq 0$, it follows immediately from Remark~\ref{uw:14} that instead of $(x,y)\mapsto (x+\alpha,y+cx+\gamma)$, we can consider $(x,y)\mapsto (x+\alpha,y+x+\frac{\gamma}{c}$). By Proposition~\ref{p2}, this shows that it suffices to consider only the cases $c=1$ and $c=0$, with $\alpha\not\in\Q$.
\end{uw}

\subsubsection{Case $c=1$, $\alpha \not\in\Q$}\label{se:332}
We will now deal with $(x,y)\mapsto(x+\alpha,y+x+\gamma)$, i.e.\ with $T_\psi$, where $\psi(x)=x+\gamma$. Let $\Psi$ be given by~\eqref{eq:zmiana}, i.e.
$$
\Psi(x,y)=\left(rx+\frac{(r-1)r}{2}\alpha+r\gamma,sy+\frac{(s-1)s}{2}\alpha+s\gamma\right).
$$
Recall that given $r,s\in\N$ (odd and relatively prime) and $c_1\in [0,\frac{1}{r})$, we have
$$
I_{c_1}=I_{c_1}^{r,s}=\left\{(x,y+c_1,u,v)\in \T^4\colon sx=ry\right\}.
$$
For $c_1$ such that $rs((s-r)\gamma-rc_1)\in\alpha\Q+\Q$ and $c_2\in[0,\frac{1}{r^2})$, define
\begin{align*}\label{eq:jcc}
J_{c_1,c_2}=J_{c_1,c_2}^{r,s}:=&\Big\{(x,y+c_1,u,v+c_2)\in\T^4 \colon sx=ry\text{ and }\\
&\left.l_0s^2u=l_0r^2v+\left(l_0rs\frac{r-s}{2}-k_0\right)(ax+by) \right\},
\end{align*}
where $l_0=l_0^{r,s,c_1}$ is the smallest positive integer such that
\begin{equation}\label{eq:l}
l_0rs((s-r)\gamma-rc_1)\in \alpha\Z+\Z
\end{equation}
and $k_0=k_0^{r,s,c_1}\in\Z$ is such that
\begin{equation}\label{eq:k}
l_0rs((s-r)\gamma-rc_1)-k_0\alpha\in\Z.
\end{equation}

\begin{lm}\label{lm:10}
For $c_1\in[0,\frac{1}{r})$ the homeomorphism $T_{\psi_{c_1}}\colon \T^3\to\T^3$, where $\psi_{c_1}(x)=(\psi^{(r)}(rx),\psi^{(s)}(sx+c_1))$, is topologically isomorphic to $T_{\varphi_{c_1}}\colon\T^3\to\T^3$, where $\varphi_{c_1}\colon \T\to\T^2$ is given by $\varphi_{c_1}(x)=(r^2x+r\gamma,s^2x+sc_1+s\gamma)$. The isomorphism is given by
\begin{equation}\label{eq:wzoru}
U=U_{c_1}\colon \T^3\to\T^3,\ U(x,u,v)=\left(x,u-\frac{r(r-1)}{2}x,v-\frac{s(s-1)}{2}x\right).
\end{equation}
\end{lm}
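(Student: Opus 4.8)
The claim is that $U = U_{c_1}$ intertwines $T_{\psi_{c_1}}$ and $T_{\varphi_{c_1}}$, i.e.\ that $U \circ T_{\psi_{c_1}} = T_{\varphi_{c_1}} \circ U$. Since $U$ is clearly a homeomorphism of $\T^3$ (it is the time-one displacement of a skew map with a linear shear in the fiber coordinates, hence invertible with continuous inverse $U^{-1}(x,u,v) = (x, u + \tfrac{r(r-1)}{2}x, v + \tfrac{s(s-1)}{2}x)$), the only thing to verify is the conjugacy identity. The approach is a direct computation: evaluate both sides on an arbitrary point $(x,u,v) \in \T^3$, expand using the definitions $\psi^{(m)}(x) = \sum_{i=0}^{m-1}\psi(x+i\alpha)$ with $\psi(x) = x + \gamma$, and check that the two expressions agree coordinatewise.

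First I would record the explicit form of the two cocycles. Since $\psi(x) = x+\gamma$, the ergodic sum is $\psi^{(m)}(x) = mx + \binom{m}{2}\alpha + m\gamma = mx + \tfrac{m(m-1)}{2}\alpha + m\gamma$. Therefore $\psi_{c_1}(x) = \big(\psi^{(r)}(rx),\,\psi^{(s)}(sx+c_1)\big) = \big(r^2x + \tfrac{r(r-1)}{2}\alpha + r\gamma,\ s^2 x + sc_1 + \tfrac{s(s-1)}{2}\alpha + s\gamma\big)$. The point of the shear in $U$ is precisely to absorb the constant terms $\tfrac{r(r-1)}{2}\alpha$ and $\tfrac{s(s-1)}{2}\alpha$ coming from the ergodic summation, leaving the genuinely ``quadratic-in-$r,s$'' cocycle $\varphi_{c_1}(x) = (r^2 x + r\gamma,\ s^2 x + sc_1 + s\gamma)$.

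Next I would compute $U \circ T_{\psi_{c_1}}(x,u,v)$. We have $T_{\psi_{c_1}}(x,u,v) = \big(x+\alpha,\ u + r^2x + \tfrac{r(r-1)}{2}\alpha + r\gamma,\ v + s^2x + sc_1 + \tfrac{s(s-1)}{2}\alpha + s\gamma\big)$; applying $U$ subtracts $\tfrac{r(r-1)}{2}(x+\alpha)$ from the second coordinate and $\tfrac{s(s-1)}{2}(x+\alpha)$ from the third. On the other side, $U(x,u,v) = (x,\ u - \tfrac{r(r-1)}{2}x,\ v - \tfrac{s(s-1)}{2}x)$ and then $T_{\varphi_{c_1}}$ adds $\alpha$ to the first coordinate and $\varphi_{c_1}(x) = (r^2x+r\gamma,\ s^2x+sc_1+s\gamma)$ to the fiber coordinates. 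Comparing the second coordinates: the left side gives $u + r^2x + \tfrac{r(r-1)}{2}\alpha + r\gamma - \tfrac{r(r-1)}{2}(x+\alpha) = u - \tfrac{r(r-1)}{2}x + r^2x - \tfrac{r(r-1)}{2}x + r\gamma = u - \tfrac{r(r-1)}{2}x + r^2 x + r\gamma$ after the $\tfrac{r(r-1)}{2}\alpha$ terms cancel; wait---more carefully, $r^2 x - \tfrac{r(r-1)}{2}x$ is exactly what appears on the right side, namely $\big(u - \tfrac{r(r-1)}{2}x\big) + r^2 x + r\gamma$. So the two agree, and the $\alpha$-terms cancel precisely because $\tfrac{r(r-1)}{2}\alpha$ is subtracted on both sides once the shift by $\alpha$ in the base is taken into account. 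The third coordinate is identical with $r$ replaced by $s$ and an extra constant $sc_1$ which passes through untouched. Hence $U \circ T_{\psi_{c_1}} = T_{\varphi_{c_1}} \circ U$, as required.

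There is no real obstacle here---this is a bookkeeping lemma whose sole purpose is to replace $\psi_{c_1}$ by the cleaner cocycle $\varphi_{c_1}$ before analyzing ergodicity (and the further partition into the sets $J_{c_1,c_2}$). The only points requiring a little care are: (i) that all identities are to be read mod $1$, so the shear coefficients $\tfrac{r(r-1)}{2}$ and $\tfrac{s(s-1)}{2}$ must be integers (which they are, since $r,s$ are integers, independent of the parity assumption); and (ii) keeping track of which constant terms are cocycle constants versus which come from the base rotation by $\alpha$, so that the cancellation of the $\tfrac{r(r-1)}{2}\alpha$ and $\tfrac{s(s-1)}{2}\alpha$ terms is correctly attributed. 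Once the displacement $U$ is written down, the verification is a few lines of elementary algebra.
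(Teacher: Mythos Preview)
Your proposal is correct and follows essentially the same approach as the paper: both compute $\psi_{c_1}(x)=\big(r^2x+\tfrac{r(r-1)}{2}\alpha+r\gamma,\ s^2x+sc_1+\tfrac{s(s-1)}{2}\alpha+s\gamma\big)$ and then observe that the shear $U$ kills the $\tfrac{r(r-1)}{2}\alpha$, $\tfrac{s(s-1)}{2}\alpha$ terms. The paper packages this as ``$\psi_{c_1}-\varphi_{c_1}$ is the coboundary $\theta-\theta\circ T$ with $\theta(x)=\big(-\tfrac{r(r-1)}{2}x,-\tfrac{s(s-1)}{2}x\big)$'', whereas you verify the conjugacy identity $U\circ T_{\psi_{c_1}}=T_{\varphi_{c_1}}\circ U$ directly; these are the same computation. (Your intermediate line has a spurious extra $-\tfrac{r(r-1)}{2}x$, but your next line and final conclusion are correct.)
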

\begin{proof}
We have
\begin{multline*}
\psi_{c_1}(x)=\left(\psi^{(r)}(rx),\psi^{(s)}(sx+c_1)\right)\\
=\left(r^2x+\frac{r(r-1)}{2}\alpha+r\gamma,s(sx+c_1)+\frac{s(s-1)}{2}\alpha+s\gamma\right).
\end{multline*}
Since for $\theta(x)=\left(-\frac{r(r-1)}{2}x,-\frac{s(s-1)}{2}x \right)$ we have
$$
\left(\frac{r(r-1)}{2}\alpha,\frac{s(s-1)}{2}\alpha \right)=\theta(x)-\theta(x+\alpha),
$$
it follows that $U$ given by~\eqref{eq:wzoru} is indeed the required isomorphism. Notice that $\theta$ is continuous, whence $U$ is a homeomorphism.
\end{proof}

\begin{pr}\label{pr:11}
The decomposition of $\T^4$ into minimal components for $(T^r\times T^s)_\Psi$ is the same as the decomposition into ergodic components. It consists of sets of the form $I_{c_1}$ for $c_1\not\in\alpha\Q+\Q$ and $J_{c_1,c_2}$ for $c_1\in\alpha\Q+\Q$, where $c_1\in[0,\frac{1}{r})$, $c_2\in [0,\frac{1}{r^2})$. Moreover, on each such component, $(T^r\times T^s)_\Psi$ is uniquely ergodic. In particular, each point in $\T^4$ is generic (for a relevant invariant measure).
\end{pr}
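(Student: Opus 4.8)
The plan is to restrict attention, one $I_{c_1}$ at a time, to a $\T^2$-extension of the rotation $Tx=x+\alpha$ by an \emph{affine} cocycle, and to read off the ergodic and the minimal decomposition of that extension from the classical description of coboundaries over an irrational rotation. By Lemma~\ref{lm:7} the sets $I_{c_1}$, $c_1\in[0,\frac1r)$, form a partition of $\T^4$ into closed invariant sets, so it is enough to understand each $(T^r\times T^s)_\Psi|_{I_{c_1}}$; combining Lemma~\ref{lm:7} with Lemma~\ref{lm:10} and the fact that $V_{c_1},U_{c_1}$ are explicit homeomorphisms, this system is topologically isomorphic to $T_{\varphi_{c_1}}$ on $\T^3$, where $\varphi_{c_1}(x)=(r^2x+r\gamma,\ s^2x+sc_1+s\gamma)$, an isomorphism carrying ergodic/minimal/uniquely ergodic components to the same. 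Since $T_{\varphi_{c_1}}$ is a continuous compact group ($\T^2$) extension of the minimal rotation $T$, the $L^2$-splitting along the characters $\chi_{A,B}(u,v)=e^{2\pi i(Au+Bv)}$ reduces ergodicity for Haar measure to the requirement that for no $(A,B)\in\Z^2\setminus\{0\}$ is $e^{2\pi i\langle(A,B),\varphi_{c_1}\rangle}$ a $T$-coboundary. Now $\langle(A,B),\varphi_{c_1}\rangle=(Ar^2+Bs^2)x+\mathrm{const}$; over an irrational rotation $e^{2\pi i(nx+\beta)}$ is a coboundary iff $n=0$ and $\beta\in\alpha\Z+\Z$ (see the Appendix), and $Ar^2+Bs^2=0$ forces $(A,B)\in\Z(s^2,-r^2)$ because $\gcd(r,s)=1$; for $(A,B)=A'(s^2,-r^2)$ the resulting constant equals $A'\,rs((s-r)\gamma-rc_1)$. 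Hence $T_{\varphi_{c_1}}$ is ergodic exactly when $rs((s-r)\gamma-rc_1)\notin\alpha\Q+\Q$, the case written $c_1\notin\alpha\Q+\Q$ in the statement.

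In that case $T_{\varphi_{c_1}}$, being an ergodic continuous $\T^2$-extension of a uniquely ergodic, minimal system, is itself uniquely ergodic, and — its invariant measure being Haar, of full support — minimal (standard facts on isometric extensions; cf.\ the Appendix and the Auslander--Ellis theorem recalled in the introduction). Transporting back, $I_{c_1}$ is a single set which is at once a minimal and an ergodic component.

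In the complementary case, the integers $l_0$ and $k_0$ of~\eqref{eq:l}--\eqref{eq:k} are tailored so that $\langle(l_0s^2,-l_0r^2),\varphi_{c_1}\rangle\equiv k_0\alpha\pmod1$, hence $F(x,u,v):=l_0s^2u-l_0r^2v-k_0x$ is a $T_{\varphi_{c_1}}$-invariant homomorphism $\T^3\to\T$. The Fourier computation above shows more: the only characters $(A,B)$ producing a coboundary are the multiples of $(l_0s^2,-l_0r^2)$ (with transfer functions $x\mapsto e^{2\pi i jk_0x}$), so the invariant $\sigma$-algebra of $T_{\varphi_{c_1}}$ for Haar measure is exactly $\sigma(F)$, and the ergodic components are the cosets $\{F=c\}$ of the closed subgroup $\{F=0\}\subseteq\T^3$. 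On such a coset $T_{\varphi_{c_1}}$ acts as an affine transformation of a compact metrizable abelian group; it is ergodic by the preceding sentence, and an ergodic affine transformation of a compact abelian group is minimal and uniquely ergodic (classical). (Concretely, choosing $p,q\in\Z$ with $r^2p+s^2q=1$ and passing to the fibre coordinates $s^2u-r^2v$ and $pu+qv$, one sees $\{F=c\}$ as a finite-to-one extension of $\T$ on which the second coordinate evolves by the affine cocycle $w\mapsto w+x+\mathrm{const}$ of nonzero slope $r^2p+s^2q=1$.) Transporting the level sets of $F$ back through $U_{c_1}$ and $V_{c_1}$ — a direct computation using Remark~\ref{odwrv} and the identity $\frac{r(r-1)}{2}s^2-\frac{s(s-1)}{2}r^2=\frac{rs(r-s)}{2}$ — identifies them with the sets $J_{c_1,c_2}$, $c_2\in[0,\frac1{r^2})$. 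Since each component so obtained is uniquely ergodic, every point of $\T^4$ is generic for the corresponding invariant measure.

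The delicate point is the complementary case. One must (i) use the exact classification of coboundaries of affine cocycles over an irrational rotation to pin the invariant $\sigma$-algebra down to $\sigma(F)$ and no larger; (ii) deal with the possibility $\gcd(k_0,l_0)>1$, when $\{F=c\}$ is disconnected and $T_{\varphi_{c_1}}$ permutes its components cyclically, so that the appeal to the ergodic theory of affine maps of compact abelian groups (which already absorbs this finite twist) must be made with care; and (iii) match the transported level sets of $F$ with the explicit equations defining $J_{c_1,c_2}$, keeping track of the normalization constants. The non-degenerate case and the unique-ergodicity assertions are comparatively routine given the cocycle facts in the Appendix.
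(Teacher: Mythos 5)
Your proposal is correct and its skeleton coincides with the paper's: reduce via Lemmas~\ref{lm:7} and~\ref{lm:10} to $T_{\varphi_{c_1}}$, compute that $\chi_{A,B}\circ\varphi_{c_1}$ can be a coboundary only when $Ar^2+Bs^2=0$, hence $(A,B)\in\Z(s^2,-r^2)$, with the side condition $A'rs((s-r)\gamma-rc_1)\in\alpha\Z+\Z$ singling out the multiples of $l_0$, and then transport the resulting kernel/coset description back through $U_{c_1}V_{c_1}$ to get $I_{c_1}$ resp.\ $J_{c_1,c_2}$. (You are also right that the clean dichotomy is $(s-r)\gamma-rc_1\in\alpha\Q+\Q$ or not; the statement's ``$c_1\in\alpha\Q+\Q$'' is shorthand for this.) Where you diverge is in the soft part: the paper observes that every measurable transfer function is forced to be the continuous character $x\mapsto -kx$, so that $\Gamma=\Gamma_{top}$, and then quotes Proposition~\ref{pr:5}, Proposition~\ref{tw:top} and Remark~\ref{uw:9} to conclude that the ergodic and minimal decompositions agree and are given by the cosets of $\mathrm{ann}\,\Gamma$, with unique ergodicity of each component coming from Furstenberg's criterion (Proposition~\ref{pr:fur}) in both cases. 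You instead identify the invariant $\sigma$-algebra as $\sigma(F)$ by hand and invoke the classical theory of ergodic affine transformations of compact abelian groups to get minimality and unique ergodicity of each level set. Both routes are sound; the paper's buys uniformity (the same two Appendix propositions handle cases (i) and (ii) identically and are reused elsewhere), while yours is more self-contained on the degenerate stratum but obliges you to justify the affine-map facts and the ``a.e.\ ergodic implies all ergodic via commuting translations'' step that the paper's framework absorbs automatically. The three ``delicate points'' you flag are real but all resolvable along the lines you indicate.
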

\begin{proof}
In view of Lemma~\ref{lm:10}, instead of $(T^r\times T^s)_{\Psi}$, we may consider $T_{\varphi_{c_1}}$, with $\varphi_{c_1}(x)=(r^2x+r\gamma,s^2+sc_1+s\gamma)$. By Remark~\ref{uw:9}, Proposition~\ref{pr:5} and Proposition~\ref{tw:top}, for the first part of the assertion, we need to show that the equation
\begin{equation}\label{eq:charakt}
\chi\circ \varphi_{c_1}=\xi-\xi\circ T.
\end{equation}
has a measurable solution $\xi\colon \T\to\T$ for $\chi(u,v)=Au+Bv$ for $A,B\in\Z$ with $A^2+B^2\neq 0$ if and only if it has a continuous one. We have
$$
\chi\circ \varphi_{c_1}(x)=\chi(r^2x+r\gamma,s^2x+sc_1+s\gamma)=
(Ar^2+Bs^2)x+(Ar+Bs)\gamma+Bsc_1.
$$
By~\cite{MR0040594}, $\chi\circ \varphi_{c_1}$ can be a measurable coboundary only if
\begin{equation}\label{eq:AB}
Ar^2+Bs^2=0\text{ and }(Ar+Bs)\gamma+Bsc_1\in\alpha\Z+\Z.
\end{equation}
Then the solution to~\eqref{eq:charakt} is given by $\xi(x)=-kx$, where $k\in\Z$ is such that $(Ar+Bs)\gamma+Bsc_1-k\alpha\in\Z$. In particular, all measurable solutions to~\eqref{eq:charakt} are continuous. It follows by Remark~\ref{uw:9} that the decomposition into minimal components for $(T^r\times T^s)_\Psi$ is the same as the decomposition into ergodic components.

We will now describe the ergodic (i.e.\ minimal) components of each $T_{\varphi_{c_1}}$ and show that they are uniquely ergodic. Suppose that $\chi\circ \varphi_{c_1}$ is a coboundary. It follows from~\eqref{eq:AB} (recall that $r\neq s$ are coprime) that $r^2|B$ and $s^2|A$, and therefore $A=A's^2$ and $B=-A'r^2$ for some $A'\in\Z$. Hence, the second part of condition~\eqref{eq:AB} takes the form
\begin{equation}\label{eq:AB1}
A'rs((s-r)\gamma-rc_1)\in \alpha\Z+\Z.
\end{equation}
Having this in mind, we consider two cases:
\begin{enumerate}[(i)]
\item\label{numerek:1}
$(s-r)\gamma-rc_1\not \in \Q\alpha+\Q$,
\item\label{numerek:2}
$(s-r)\gamma-rc_1\in \Q\alpha+\Q$.
\end{enumerate}
In case~\eqref{numerek:1}, it follows immediately from the first part of the proof that $(T^r\times T^s)_{\Psi}|_{I_{c_1}}$ is ergodic and minimal. Unique ergodicity follows from Proposition~\ref{pr:fur}.

We consider now case~\eqref{numerek:2}. We will describe characters $\chi$, such that~\eqref{eq:charakt} has a (measurable and continuous) solution. In view of and \eqref{eq:l} and~\eqref{eq:AB1}, $A'=al_0$ for some $a\in \Z$. Therefore, a measurable solution $\xi\colon\T\to\T$ to~\eqref{eq:charakt} exists precisely for the characters $\chi$ of $\T^2$ of the form
\begin{equation}\label{char}
\chi(u,v)=al_0s^2u-al_0r^2v \text{ for }a\in\Z.
\end{equation}
Denote the set of such characters by $\Gamma=\Gamma_{top}$. It is easy to see that
$$
\text{ann }\Gamma=\left\{(u,v)\in \T^2\colon l_0s^2 u=l_0r^2v\right\}.
$$
We claim that
$$
\widetilde{J}_{c_1,c_2}^{r,s}:=\left\{(t,u,v+c_2)\in\T^3\colon l_0s^2u=l_0r^2v-k_0t\right\}\text{ for $c_2\in [0,1/r^2)$}
$$
are minimal components of $T_{\varphi_{c_1}}$. Indeed:
\begin{itemize}
	\item each $\widetilde{J}_{c_1,c_2}^{r,s}$ is closed,
	\item by~\eqref{eq:k}, each $\widetilde{J}_{c_1,c_2}^{r,s}$ is $T_{\varphi_{c_1}}$-invariant,
	\item $\bigcup_{c_2}\widetilde{J}_{c_1,c_2}^{r,s}=\T^3$ (notice that the projection of $\widetilde{J}_{c_1,c_2}^{r,s}$ onto the first two coordinates is equal to $\T^2$),
	\item $(u,v)\in \text{ann }\Gamma$ if and only if $\widetilde{J}_{c_1,c_2}^{r,s}+(0,u,v)=\widetilde{J}_{c_1,c_2}^{r,s}$
\end{itemize}
(see Proposition~\ref{tw:top}). Unique ergodicity follows, as in the previous case, from Proposition~\ref{pr:fur}.

To find the minimal component corresponding to each of the sets $\widetilde{J}_{c_1,c_2}^{r,s}$ described above, we need to find the preimage of $\widetilde{J}_{c_1,c_2}^{r,s}$ via $U_{c_1}\circ V_{c_1}$. We have
\begin{align*}
U_{c_1}^{-1}&(\widetilde{J}_{c_1,c_2}^{r,s})=\\
&=\left\{ (t,u,v+c_2)\colon l_0s^2\left(u-\frac{r(r-1)}{2}t\right)=l_0r^2\left(v-\frac{s(s-1)}{2}t\right)-k_0t \right\}\\
&=\left\{(t,u,v+c_2)\colon l_0s^2u=l_0r^2v+\left(l_0rs\frac{r-s}{2}-k_0\right)t \right\},
\end{align*}
whence, by Remark~\ref{odwrv},
\begin{align*}
(U_{c_1}V_{c_1})^{-1}(\widetilde{J}_{c_1,c_2}^{r,s})=&\left\{(rt,st+c_1,u,v+c_2)\colon\right.\\
&\left. l_0s^2u=l_0r^2v+\left(l_0rs\frac{r-s}{2}-k_0\right)t \right\}\\
=&\Big\{(x,y+c_1,u,v+c_2) \colon sx=ry\text{ and }\\
&\left.l_0s^2u=l_0r^2v+\left(l_0rs\frac{r-s}{2}-k_0\right)(ax+by) \right\}.
\end{align*}
Therefore
$(U_{c_1}V_{c_1})^{-1}\left(\widetilde{J}_{c_1,c_2}^{r,s}\right)=J_{c_1,c_2}^{r,s}$,
which completes the proof.
\end{proof}

\begin{uw}\label{uw:18}
The sets $I_{c_1}^{r,s}$ are translates of $I_{0}^{r,s}$, which is a subgroup of $\T^4$. Therefore, the conditional measures given by the ergodic decomposition of $(T^r\times T^s)_\Psi$, which are supported on the sets $I_{c_1}$ for $c_1$ such that $(s-r)\gamma-rc_1\not \in \Q\alpha+\Q$, are translates of Haar measure on $I_{0}^{r,s}$.

For $c_1$ with $(s-r)\gamma-rc_1\in \Q\alpha+\Q$, the sets $\widetilde{J}_{c_1,c_2}^{r,s}$ are translates of $\widetilde{J}_{c_1,0}^{r,s}$, which is a subgroup of $\T^3$. As before, the conditional measures given by the ergodic decomposition of $T_{\varphi_{c_1}}$, are translates of Haar measure on $\widetilde{J}_{c_1,0}^{r,s}$. Notice that $V_{c_1}$ carries a translate of Haar measure to Haar measure. Therefore, also the conditional measures given by the ergodic decomposition of $(T^r\times T^s)_\Psi$, which are supported on the sets $J_{c_1,c_2}^{r,s}$, are translates of the corresponding Haar measures.
\end{uw}

\begin{uw}\label{uw:19}
	Let $G$ be a compact Abelian group. Let $\chi\in \widehat{G}$ and let $H=\ker\chi$. Then $\chi$ is constant on each coset of $H$. Moreover, if $x+H\neq y+H$ then $\chi(x+H)\neq \chi(y+		H)$.
\end{uw}
\begin{uw}\label{uw:20}
	Let $\chi\in \widehat{G}$ and let $H\subset G$ be a subgroup. Then the integral of $\chi$ with respect to Haar measure on each coset of $H$ is zero if and only if $\chi|_H$ is not constant.
\end{uw}

\begin{lm}\label{lm:grupy}
	Let $G$ be a compact metric Abelian group. Let $\chi\in\widehat{G}$. Then there exists $\delta>0$ such that whenever $\{y_1,\dots,y_n\}\subset G$ is a $\delta$-net and $\chi(y_i)=1$ for 	$i=1,\dots,n$ then $\chi\equiv 1$.
\end{lm}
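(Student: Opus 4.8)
The plan is to argue by contradiction using the compactness of $G$ together with the fact that $\ker\chi$ is a closed proper subgroup whenever $\chi\not\equiv 1$. If $\chi\equiv 1$ there is nothing to prove, so assume $\chi\not\equiv 1$. Then $H:=\ker\chi$ is a closed subgroup of $G$ with $H\neq G$; pick $z\in G\setminus H$. Since $\chi$ is continuous and $\chi(z)\neq 1$, there is an open neighbourhood $U$ of $z$ on which $\chi$ takes values away from $1$, say $|\chi(w)-1|\geq \eta$ for all $w\in U$ and some $\eta>0$.

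Now I would run the following compactness/translation argument. Suppose the conclusion fails for this $\chi$: then for every $\delta>0$ there is a $\delta$-net $Y_\delta=\{y_1,\dots,y_n\}\subset G$ with $\chi(y_i)=1$ for all $i$, yet $\chi\not\equiv 1$ (which we are already assuming). The point of being a $\delta$-net is that every $g\in G$ lies within $\delta$ of some $y_i$; in particular the chosen point $z$ is within $\delta$ of some $y_i$ with $\chi(y_i)=1$. Letting $\delta\to 0$ we get a sequence of net-points $y^{(k)}\to z$ with $\chi(y^{(k)})=1$ for all $k$. By continuity of $\chi$, $\chi(z)=\lim_k\chi(y^{(k)})=1$, contradicting $z\notin H$. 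Hence some $\delta>0$ works, which is exactly the assertion.

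To make this uniform and extract an explicit $\delta$ (rather than just a limiting argument), one can instead argue directly: by continuity of $\chi$ at the identity $e$, choose $\delta>0$ so that $d(g,e)<\delta$ implies $|\chi(g)-1|<\eta$ for a fixed $\eta<|\chi(z)-1|$ (this $\eta$ exists precisely because $\chi\not\equiv 1$, so $\sup_{g}|\chi(g)-1|>0$, and in fact one may take $\eta$ to be any value below $\max_g|\chi(g)-1|$). Now let $\{y_1,\dots,y_n\}$ be any $\delta$-net with $\chi(y_i)=1$ for all $i$. Given any $g\in G$, pick $y_i$ with $d(g,y_i)<\delta$; then $d(g-y_i,e)<\delta$ by translation-invariance of the metric (a standard normalisation for compact metric groups), so $|\chi(g-y_i)-1|<\eta$, i.e. $|\chi(g)-1|=|\chi(g)\overline{\chi(y_i)}-1|=|\chi(g-y_i)-1|<\eta$. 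Since this holds for \emph{every} $g\in G$ and $\eta<\max_g|\chi(g)-1|$, we get a contradiction unless $\chi\equiv 1$; hence if $\chi\not\equiv 1$ no such net of $1$-points exists for this $\delta$, which is the claim (and if $\chi\equiv 1$ the claim is trivial).

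The only mild subtlety — the step I expect to require the most care — is ensuring the metric on $G$ may be taken translation-invariant, so that $d(g,y_i)<\delta$ upgrades to $d(g-y_i,e)<\delta$ and the character identity $\chi(g)\overline{\chi(y_i)}=\chi(g-y_i)$ can be exploited. For a compact metrizable abelian group this is standard (average an arbitrary compatible metric over Haar measure, or use that $G$ embeds in a product of circles); alternatively, one can avoid it entirely by using the limiting argument of the second paragraph, which only uses continuity of $\chi$ and the defining property of a $\delta$-net. Either way the argument is short and the essential content is just: a nontrivial continuous character cannot be identically $1$ on an arbitrarily fine net, because such nets accumulate everywhere.
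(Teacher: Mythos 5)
Your proof is correct, but it concludes differently from the paper's, which is worth comparing. The paper's argument is: by uniform continuity of $\chi$ on the compact space $G$ (no group structure of the metric needed), choose $\delta$ so that $d(x,y)<\delta$ forces $|\chi(x)-\chi(y)|<\frac14$; a $\delta$-net of $1$-points then puts $\chi(G)$ inside the ball $|z-1|<\frac12$, and since $\chi(G)$ is a closed subgroup of $\mathbb{S}^1$, it must be $\{1\}$. Your primary (limiting) argument is softer and perfectly valid: if for every $\delta$ such a net existed, the $1$-points of $\chi$ would be dense, hence $\chi\equiv1$ by continuity; this avoids all quantitative estimates at the cost of not exhibiting $\delta$ (which the lemma does not require). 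Your quantitative variant is essentially the paper's proof with two differences: you route the estimate through a translation-invariant metric and the identity $\chi(g)\overline{\chi(y_i)}=\chi(g-y_i)$, where plain uniform continuity of $\chi$ with respect to the \emph{given} metric already yields $|\chi(g)-\chi(y_i)|<\eta$ directly (so the normalisation you flag as the delicate point can be dispensed with entirely); and you close the argument by comparing $\eta$ with $\max_g|\chi(g)-1|$ rather than by invoking the subgroup structure of $\chi(G)\subset\mathbb{S}^1$. Both closings are sound; the paper's has the minor aesthetic advantage of a universal threshold ($\frac12$) not depending on $\chi$, though $\delta$ depends on $\chi$ in either case, which is all the subsequent applications need.
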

\begin{proof}
	Let $d$ stand for the metric on $G$. Since $\chi$ is uniformly continuous, there exists $\delta>0$ such that $d(x,y)<\delta$ implies $|\chi(x)-\chi(y)|<\frac{1}{4}$. Since $\{y_1,\dots, y_n\}$ is a 	 $\delta$-net, it follows that $|\chi(G)-1|<\frac{1}{2}$. This is however possible only when $\chi\equiv 1$, as $\chi(G)$ is a closed subgroup of $\mathbb{S}^1$.
\end{proof}

\begin{lm}\label{lm:12}
	Let $\chi\in\widehat{\T}^4$ be a non-trivial character. If $r,s\in\N$ are large enough then $\chi|_{I_{c_1}}$ is not constant for $c_1\in [0,\frac{1}{r})$.
\end{lm}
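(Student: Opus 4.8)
The plan is to diagonalize the situation via the explicit isomorphism $V=V_{c_1}\colon I_{c_1}\to\T^3$ from Lemma~\ref{lm:7}, which turns $\chi|_{I_{c_1}}$ into an (affine) character of $\T^3$ whose triviality is immediate to read off.

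First I would write a non-trivial $\chi\in\widehat{\T}^4$ as $\chi(x,y,u,v)=e^{2\pi i(\alpha_1x+\alpha_2y+\alpha_3u+\alpha_4v)}$ with $(\alpha_1,\alpha_2,\alpha_3,\alpha_4)\in\Z^4\setminus\{0\}$. Using $V_{c_1}^{-1}(t,u,v)=(rt,st+c_1,u,v)$ from Remark~\ref{odwrv}, one computes
$$
\chi\bigl(V_{c_1}^{-1}(t,u,v)\bigr)=e^{2\pi i\alpha_2c_1}\,e^{2\pi i\left((\alpha_1r+\alpha_2s)t+\alpha_3u+\alpha_4v\right)}.
$$
Hence $\chi|_{I_{c_1}}$ is constant if and only if the character $(t,u,v)\mapsto e^{2\pi i((\alpha_1r+\alpha_2s)t+\alpha_3u+\alpha_4v)}$ of $\T^3$ is trivial, i.e.\ if and only if $\alpha_3=\alpha_4=0$ and $\alpha_1r+\alpha_2s=0$. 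In particular this condition does not depend on $c_1$, which is why a single threshold on $r,s$ will serve all $c_1\in[0,\frac1r)$ at once.

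It then remains to split into two cases. If $(\alpha_3,\alpha_4)\neq(0,0)$, there is nothing to prove: $\chi|_{I_{c_1}}$ is non-constant for \emph{every} $r,s$. Otherwise $(\alpha_1,\alpha_2)\neq(0,0)$, and I would argue that $\alpha_1r+\alpha_2s=0$ together with $\gcd(r,s)=1$ forces $s\mid\alpha_1$ and $r\mid\alpha_2$, with $\alpha_1,\alpha_2$ not both zero; this is impossible as soon as $r,s>\max(|\alpha_1|,|\alpha_2|)$. Therefore for all $r,s$ large enough (a bound depending only on $\chi$) we have $\alpha_1r+\alpha_2s\neq0$, so $\chi|_{I_{c_1}}$ is non-constant for every $c_1\in[0,\frac1r)$, as claimed.

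There is essentially no obstacle here: once the coordinate change $V_{c_1}$ is in place the argument is a short computation plus an elementary coprimality observation. The only point worth flagging is that the threshold on $r,s$ depends on the character $\chi$; this is harmless for the intended use of the lemma, namely verifying condition~\eqref{numer:c} with $\mathcal{F}=\widehat{\T}^4$, where the threshold is allowed to depend on the chosen $g\in\mathcal{F}$.
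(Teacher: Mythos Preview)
Your proof is correct and, if anything, cleaner than the paper's. The paper argues topologically: it invokes Lemma~\ref{lm:grupy} (a non-trivial character cannot be identically $1$ on a sufficiently fine $\delta$-net) and then exhibits, for $r,s$ large, a $\delta$-net of $\T^4$ inside $I_{c_1}^{r,s}=A_{c_1}^{r,s}\times\T^2$ by noting that $\{(i/s,j/r):0\le i<s,\ 0\le j<r\}\subset A_0^{r,s}$. You instead pull back through the explicit parametrization $V_{c_1}^{-1}$ of Remark~\ref{odwrv}, which turns the question into the triviality of the character $(t,u,v)\mapsto e^{2\pi i((\alpha_1r+\alpha_2s)t+\alpha_3u+\alpha_4v)}$ on $\T^3$; the rest is an elementary coprimality argument giving the explicit threshold $r,s>\max(|\alpha_1|,|\alpha_2|)$.

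What each approach buys: your route is shorter, gives an explicit bound, and makes the independence of $c_1$ transparent without appeal to Lemma~\ref{lm:grupy}. The paper's $\delta$-net argument is slightly more roundabout here, but it is set up to be reused verbatim in the companion Lemma~\ref{lm:13} for the smaller sets $J_{c_1,c_2}^{r,s}$, where no equally clean global parametrization by a fixed torus is available (the sets depend on $l_0,k_0$, which vary with $c_1$); there the $\delta$-net method still goes through with minimal change. One cosmetic remark: at the end you write $\mathcal{F}=\widehat{\T}^4$, but in the paper's application the linearly dense family is $\mathcal{F}=\widehat{\T}^2$ and one applies the lemma to $\chi\otimes\overline{\chi}\in\widehat{\T}^4$; this does not affect your proof of the lemma.
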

\begin{proof}
	Fix $1\not\equiv\chi\in\widehat{\T}^4$ and let $\delta>0$ be as in Lemma~\ref{lm:grupy}. In view of Lemma~\ref{lm:grupy}, we need to show that whenever $r,s$ are large enough then 		there exists a $\delta$-net of $\T^4$ contained in $I_{c_1}^{r,s}$ for $c_1\in [0,\frac{1}{r})$. Since $I_{c_1}^{r,s}$ is a translate of $I_0^{r,s}$ and the third and fourth coordinate in $I_{0}$ is arbitrary, it suffices to prove that for $r,s$ sufficiently large we can always find a $\delta$-net of $\T^2$ contained in the set $A_{0}^{r,s}$. To complete the proof it suffices to consider sets of the form
	$
	\left\{\left(\frac{i}{s},\frac{j}{r}\right)\colon 0\leq i<s,0\leq j<r\right\}.
	$
\end{proof}

\begin{lm}\label{lm:13}
Let $\chi\in\widehat{\T}^4$ be a non-trivial character. If $r,s\in\N$ are large enough then $\chi|_{J_{c_1,c_2}^{r,s}}$ is not constant for $c_1\in [0,\frac{1}{r})$ such that $c_1
\in\alpha\Q+\Q$ and $c_2\in [0,\frac{1}{r^2})$.
\end{lm}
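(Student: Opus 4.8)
The plan is to follow the proof of Lemma~\ref{lm:12}. Since $r,s$ are odd, $r-s$ is even, so $m:=l_0rs\tfrac{r-s}{2}-k_0$ is an integer, and therefore $J_{c_1,c_2}^{r,s}$ is the translate by $(0,c_1,0,c_2)$ of the \emph{closed subgroup}
\[
\widetilde J:=\bigl\{(x,y,u,v)\in\T^4:\ sx=ry\ \text{ and }\ l_0s^2u=l_0r^2v+m(ax+by)\bigr\}
\]
of $\T^4$ (here $a,b\in\Z$, $ar+bs=1$). As $\chi$ is a character and $0\in\widetilde J$, if $\chi$ were constant on $J_{c_1,c_2}^{r,s}$, say $\chi\equiv\lambda$, then $\lambda=\chi(0,c_1,0,c_2)\chi(g)$ for every $g\in\widetilde J$; taking $g=0$ gives $\lambda=\chi(0,c_1,0,c_2)$ and hence $\chi|_{\widetilde J}\equiv 1$. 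So it suffices to prove that $\chi|_{\widetilde J}\not\equiv 1$ for all large $r,s$.

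Next I would invoke Lemma~\ref{lm:grupy} to fix $\delta>0$, depending only on $\chi$, such that any $\delta$-net of $\T^4$ on which $\chi\equiv 1$ forces $\chi\equiv 1$ -- contradicting non-triviality of $\chi$. Thus it is enough to exhibit, for $r,s$ large, a $\delta$-net of $\T^4$ contained in $\widetilde J$. The candidate is
\[
P_{i,k,q}:=\Bigl(\tfrac{i}{s},\ \tfrac{i}{r},\ \tfrac{mi}{l_0s^2rs}+\tfrac{k}{l_0s^2},\ \tfrac{q}{r^2}\Bigr)\bmod 1,\qquad 0\le i<rs,\ \ 0\le k<l_0s^2,\ \ 0\le q<r^2.
\]
Membership $P_{i,k,q}\in\widetilde J$ is a direct check: the first two coordinates satisfy $sx=i=ry$ in $\T$, and since $ar+bs=1$ one has $ax+by=\tfrac{i}{rs}$, so $l_0s^2u-l_0r^2v-m(ax+by)=\bigl(\tfrac{mi}{rs}+k\bigr)-l_0q-\tfrac{mi}{rs}=k-l_0q\in\Z$. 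For $\delta$-density: by the Chinese Remainder Theorem ($\gcd(r,s)=1$), as $i$ ranges over $\{0,\dots,rs-1\}$ the pair $(\tfrac{i}{s}\bmod 1,\tfrac{i}{r}\bmod 1)$ ranges over the whole grid $\{(\tfrac{a}{s},\tfrac{b}{r}):0\le a<s,\ 0\le b<r\}$, which is $\tfrac1{\min(r,s)}$-dense in $\T^2$; and for each fixed $i$, the points $(\tfrac{mi}{l_0s^2rs}+\tfrac{k}{l_0s^2},\tfrac{q}{r^2})$ form a coset of a subgroup of $\T^2$ which, since $l_0\ge 1$, is $\max(\tfrac1{l_0s^2},\tfrac1{r^2})$-dense. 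All these mesh sizes go to $0$ as $r,s\to\infty$, so $\{P_{i,k,q}\}$ is a $\delta$-net of $\T^4$ once $r,s$ are large enough, which finishes the proof.

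The argument is essentially routine given Lemma~\ref{lm:grupy} and the description of $J_{c_1,c_2}^{r,s}$; the only feature not already present in Lemma~\ref{lm:12} is the $i$-dependent shift $\tfrac{mi}{l_0s^2rs}$ in the third coordinate, which is forced by the equation defining $\widetilde J$. The mild point to watch is that this shift does not harm $\delta$-density -- it does not, because a translate of a $\delta$-dense subgroup is again $\delta$-dense -- and that $m$ is an integer, which is exactly where the standing assumption that $r,s$ are odd enters.
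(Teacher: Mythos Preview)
Your proof is correct and follows essentially the same approach as the paper: reduce via translation to a single coset, invoke Lemma~\ref{lm:grupy}, and exhibit a $\delta$-net of $\T^4$ inside the set. The only differences are cosmetic: the paper translates only in the $c_2$-variable (working in $J_{c_1,0}^{r,s}$) and argues the existence of suitable $(u_{i,j},v_{i,j})$ via infinite divisibility of $\T$, whereas you translate all the way to the underlying subgroup $\widetilde J$ and write down the net explicitly; your observation that $m\in\Z$ (using that $r,s$ are odd) makes the subgroup structure of $\widetilde J$ transparent, which slightly streamlines the appeal to Lemma~\ref{lm:grupy}.
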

\begin{proof}
Fix $1\not\equiv\chi\in\widehat{\T}^4$ and let $\delta>0$ be as in Lemma~\ref{lm:grupy}. Since $J_{c_1,c_2}^{r,s}$ is a translation of $J_{c_1,0}^{r,s}$, by  Lemma~\ref{lm:grupy}, it suffices to prove that for $r,s$ sufficiently large there exists a $\delta$-net of $\T^4$ in $J_{c_1,0}^{r,s}$.
	
Notice first that the projection of $J_{c_1,0}^{r,s}$ onto the first two coordinates is equal to $A_{c_1}^{r,s}$. Indeed, for any $z\in\T$ the equation $l_0s^2u=l_0r^2v+z$ has a solution $(u,v)$ as $\T$ is an infinitely divisible group. By the proof of the previous lemma, we can find a $\delta$-net of $\T^2$
$$
\left\{(x_i,y_j)\colon 0\leq i,j<n\right\}\subset A_{c_1}^{r,s}.
$$
Moreover, by the infinite divisibility of $\T$, for each $(i,j)$ there exist $u_{i,j},v_{i,j}$ such that $(x_i,y_j,u_{i,j},v_{i,j})\in J_{c_1,0}^{r,s}$. Moreover, for $0\leq a,b<l_0s^2$
$$
\left(x_i,y_j,u_{i,j}+\frac{a}{l_0s^2},v_{i,j}+\frac{b}{l_0s^2}\right)\in J_{c_1,0}^{r,s}.
$$
Therefore, whenever $r,s$ are sufficiently large, the set
$$
\left\{\left(x_i,y_j,u_{i,j}+\frac{a}{l_0s^2},v_{i,j}+\frac{b}{l_0s^2} \right)\colon 0\leq i,j<n, 0\leq a,b<l_0s^2 \right\}
$$
is the required $\delta$-net.\footnote{Notice that since $l_0\geq 1$, the condition on $r,s$ is independent of $c_1$.} This completes the proof.
\end{proof}

%As an immediate consequence of Proposition~\ref{pr:11}, Lemma~\ref{lm:12} and Lemma~\ref{lm:13}, we obtain the following.
\begin{pr}\label{pr:14}
For any $1\not\equiv\chi\in\widehat{\T}^4$ whenever $r,s\in\N$ are odd, relatively prime and large enough then $\int_I \chi \ d \lambda_{I}=0$ for each ergodic component of $(T^r\times T^s)_{\Psi}$, where $\lambda_I$ is the relevant invariant measure.
\end{pr}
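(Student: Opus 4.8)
The plan is to combine the structural results just established with the two ``$\delta$-net'' lemmas. By Proposition~\ref{pr:11}, every ergodic component of $(T^r\times T^s)_\Psi$ is one of the sets $I_{c_1}$ (for $c_1\notin\alpha\Q+\Q$) or $J_{c_1,c_2}$ (for $c_1\in\alpha\Q+\Q$), and — crucially — by Remark~\ref{uw:18} the corresponding invariant measure $\lambda_I$ is a translate of Haar measure on the relevant subgroup ($I_0^{r,s}$ of $\T^4$, respectively $J_{c_1,0}^{r,s}$ transported by $V_{c_1}$). So the statement is really about integrating a character against Haar measure on a coset of a closed subgroup of $\T^4$.

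First I would invoke Remark~\ref{uw:20}: the integral of a character $\chi\in\widehat{\T^4}$ against Haar measure on a coset $x_0+H$ of a closed subgroup $H$ vanishes if and only if $\chi|_H$ is not constant (equivalently, $\chi|_H\not\equiv 1$, since $\chi(H)$ is a subgroup of $\mathbb{S}^1$). Indeed, the integral factors as $\chi(x_0)\int_H\chi\,d\mathrm{Haar}_H$, and the latter is $0$ unless $\chi|_H\equiv 1$. Thus it suffices to show that for $r,s$ odd, coprime and large enough, $\chi$ is non-constant on the subgroup underlying each ergodic component.

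Next, fix a non-trivial $\chi\in\widehat{\T^4}$. By Lemma~\ref{lm:12}, there is a threshold (depending only on $\chi$) beyond which $\chi|_{I_{c_1}}$ is non-constant for every $c_1\in[0,\tfrac1r)$; by Lemma~\ref{lm:13}, there is likewise a threshold beyond which $\chi|_{J_{c_1,c_2}^{r,s}}$ is non-constant for every admissible $c_1\in\alpha\Q+\Q$ and $c_2\in[0,\tfrac1{r^2})$ — and, as noted in the footnote to Lemma~\ref{lm:13}, this threshold does not depend on $c_1$ since $l_0\geq 1$. Taking $r,s$ larger than both thresholds (and keeping them odd, coprime) handles every ergodic component simultaneously: whichever of the two types the component $I$ is, $\chi$ restricted to the underlying subgroup is non-constant, hence $\int_I\chi\,d\lambda_I=0$ by the previous paragraph. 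For the $J$-components one must also remember that $V_{c_1}$ carries a translate of Haar measure to a translate of Haar measure (Remark~\ref{uw:18}), so non-constancy of $\chi\circ(V_{c_1})^{-1}$ on $\widetilde{J}_{c_1,0}^{r,s}$ — which is what Lemma~\ref{lm:13} is phrased to give, after pulling back — is exactly what is needed.

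There is essentially no hard step here; the work was already done in Lemmas~\ref{lm:grupy}, \ref{lm:12}, \ref{lm:13} and in the identification of the conditional measures in Remark~\ref{uw:18}. The only point requiring a little care is bookkeeping: making sure the single threshold on $(r,s)$ is uniform over \emph{all} ergodic components (over all $c_1$ and all $c_2$), which is guaranteed because the $\delta$-nets constructed in Lemmas~\ref{lm:12} and~\ref{lm:13} depend on $r,s$ but not on the translation parameters $c_1,c_2$. I would therefore write the proof as: fix $\chi$, obtain $\delta$ from Lemma~\ref{lm:grupy}, apply Lemmas~\ref{lm:12} and~\ref{lm:13} to get non-constancy of $\chi$ on each underlying subgroup for $r,s$ large, and conclude by Remark~\ref{uw:20} together with Remark~\ref{uw:18}.
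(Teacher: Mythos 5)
Your argument is exactly the one the paper intends: the paper's proof is a one-line citation of Proposition~\ref{pr:11}, Lemma~\ref{lm:12} and Lemma~\ref{lm:13}, and your write-up simply spells out the implicit steps (identifying $\lambda_I$ as a translate of Haar measure via Remark~\ref{uw:18} and invoking Remark~\ref{uw:20} to convert non-constancy into vanishing of the integral). Correct, same approach.
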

\begin{proof}
The assertion follows immediately from Proposition~\ref{pr:11}, Lemma~\ref{lm:12} and Lemma~\ref{lm:13}.
\end{proof}

\begin{proof}[Proof of Theorem~\ref{thm:affine} in case $c=1$, $\alpha\not\in\Q$]\label{totu}
We only need to check that \eqref{numer:a}, \eqref{numer:b}, \eqref{numer:c} hold. Conditions~\eqref{numer:a} and~\eqref{numer:b} follows from Proposition~\ref{pr:11}. It follows by Proposition~\ref{pr:14} that also condition~\eqref{numer:c} is satisfied (we take $\mathcal{F}=\widehat{\T}^2$).
\end{proof}

\subsubsection{Case  $c=0$, $\alpha \not\in\Q$}
\begin{proof}[Proof of Theorem~\ref{thm:affine} in case $c=0$, $\alpha\not\in\Q$]
We have $T_\psi(x,u)=(x+\alpha,u+\gamma)$, i.e.\ $T_\psi$ is a rotation on $\T^2$.  The decomposition into minimal components of $(T^r\times T^s)_\Psi$ consists of the cosets of $I_0=I_0^{r,s}=\{(x,y,0,0)\in\T^4\colon sx=ry\}$. They are at the same the ergodic components which are moreover uniquely ergodic, and we conclude as previously.
\end{proof}

\subsection{Generic case -- compact group extensions}
Let $f\colon \R\to\R$ (periodic of period $1$) be in $L^2(\T)$. Denote by
$$
f(x)=\sum_{n\in\Z}\widehat{f}(n) e^{2\pi i nx}
$$
the Fourier expansion of $f$. Recall that our goal is to prove disjointness of
$$
\T^2\ni (x,y)\mapsto (x+\alpha,y+cx+f(x))\in\T^2,
$$
with the M\"obius function $\bmu$ for a generic set of $\alpha$ (under some additional assumptions on $f$).

Recall the following result.
\begin{tw}[\cite{katokbook2003},~\cite{MR1286834}\footnote{In~\cite{katokbook2003} $f$ is assumed to be of class $C^2$ and the proof in fact requires that $\sum n|\widehat{f}(n)|<\infty$. See~\cite{MR1286834} for the proof of Theorem~\ref{thm:katok} with $f\in C^{1+\delta}(\T)$ for some $\delta>0$.}]\label{thm:katok}

Suppose that $f(x)=\sum_{n\in\Z}\widehat{f}(n) e^{2\pi i nx}$ is in $C^{1+\delta}(\T)$ for some $\delta>0$, and has zero mean. Denote by $T\colon \T\to \T$ the irrational rotation $x\mapsto x+\alpha$. Assume that for a sequence $(p_n/q_n)_{n\in\N}$ of rational numbers we have
\begin{equation}\label{eq:28}
\frac{|\widehat{f}({q_n})|}{\sum_{k\geq 1}|\widehat{f}({kq_n})|}>c>0
\end{equation}
and
\begin{equation}\label{eq:29}
\frac{\left| \alpha-\frac{p_n}{q_n}\right|q_n}{|\widehat{f}({q_n})|}\to 0.
\end{equation}
Then for each $\lambda\in\mathbb{S}^1$ the cocycle $\lambda e^{2\pi i f (\cdot)}$ is not a $T$-coboundary.
\end{tw}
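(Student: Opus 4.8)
\noindent\emph{Proof proposal.}\ I would argue by contradiction, ruling out even a \emph{measurable} coboundary. Suppose $\lambda e^{2\pi if}=g\circ T/g$ a.e.\ for some measurable $g\colon\T\to\mathbb S^1$, write $f^{(N)}(x):=\sum_{j=0}^{N-1}f(x+j\alpha)$ and $\varphi:=\lambda e^{2\pi if}$, so that $\varphi^{(N)}=\lambda^{N}e^{2\pi if^{(N)}}$, which telescopes to $g\circ T^{N}/g$. The plan is to produce $N_n\to\infty$ with $N_n\alpha\to0\pmod 1$: then $g\circ T^{N_n}\to g$ in $L^2$, hence $\varphi^{(N_n)}\to1$ in $L^2$, and I will contradict this from the structure of $f^{(N_n)}$. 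Note that $q_n\to\infty$ and $\widehat f(q_n)\ne0$ are forced by~\eqref{eq:28}--\eqref{eq:29}; from~\eqref{eq:29}, $|\beta_n|/|\widehat f(q_n)|\to0$ where $\beta_n:=q_n\alpha-p_n$ (so $|\beta_n|=|\alpha-p_n/q_n|q_n$); and $q_n|\widehat f(q_n)|\to0$ since $f\in C^{1+\delta}$ (indeed $\widehat f(n)={\rm o}(1/n)$).

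The crucial point is that the obvious candidate $N_n=q_n$ is useless: since $q_n|\widehat f(q_n)|\to0$ one checks $f^{(q_n)}\to0$ uniformly, so $\varphi^{(q_n)}$ already converges to a constant and carries no obstruction. One must iterate and re-amplify the resonant frequency $q_n$. Write $f^{(q_n)}(x)=q_nF_n(q_nx)+\eta_n(x)$, where $F_n(y):=\sum_{k\ne0}\widehat f(kq_n)e^{2\pi iky}$ is real-valued (this is the Birkhoff sum of $f$ over the period-$q_n$ orbit of the rational rotation $x\mapsto x+p_n/q_n$, which sees only the $q_n\Z$-harmonics of $f$) and $\|\eta_n\|_\infty\le\tfrac12\|f'\|_\infty q_n|\beta_n|$ (replacing $\alpha$ by $p_n/q_n$ in each of the $q_n$ summands). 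Since $q_n\alpha\equiv\beta_n\pmod1$, summing $m$ copies gives, for $N=mq_n$,
\[
f^{(mq_n)}(x)=\widetilde\Phi_n(q_nx)+E_n(x),\qquad \widetilde\Phi_n(y):=q_n\sum_{i=0}^{m-1}F_n(y+iq_n\beta_n),\quad E_n:=\sum_{i=0}^{m-1}\eta_n(\cdot+i\beta_n).
\]
I would now set $m=m_n:=\lfloor c/(16\,q_n|\widehat f(q_n)|)\rfloor$ and $N_n:=m_nq_n$, and check three things, each an easy consequence of the hypotheses. (i) By~\eqref{eq:28}, $\|F_n\|_\infty\le\tfrac2c|\widehat f(q_n)|$, so $\|\widetilde\Phi_n\|_\infty\le\tfrac2c\,m_nq_n|\widehat f(q_n)|\le\tfrac18$. (ii) By~\eqref{eq:29}, $m_nq_n|\beta_n|\le\tfrac{c}{16}|\beta_n|/|\widehat f(q_n)|\to0$, hence $N_n\alpha=m_nq_n\alpha\to0\pmod1$ and $\|E_n\|_\infty\le m_n\cdot\tfrac12\|f'\|_\infty q_n|\beta_n|\to0$. (iii) The $k=1$ Fourier coefficient is $\widehat{\widetilde\Phi_n}(1)=q_n\widehat f(q_n)\sum_{i<m_n}e^{2\pi i\,iq_n\beta_n}$; since $m_nq_n|\beta_n|\le\tfrac12$, the geometric sum has modulus $\ge2m_n/\pi$, so $|\widehat{\widetilde\Phi_n}(1)|\ge\tfrac2\pi m_nq_n|\widehat f(q_n)|\ge\tfrac{c}{16\pi}$ for all large $n$.

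Finally I would assemble the contradiction. From $\varphi^{(N_n)}=\lambda^{N_n}e^{2\pi i\widetilde\Phi_n(q_n\cdot)}e^{2\pi iE_n}\to1$ in $L^2$ and $\|E_n\|_\infty\to0$ we get $\|e^{2\pi i\widetilde\Phi_n(q_n\cdot)}-\overline{\lambda^{N_n}}\|_{L^2}\to0$; passing to a subsequence with $\lambda^{N_n}\to\mu\in\mathbb S^1$ and using that $x\mapsto q_nx$ is measure-preserving on $\T$, this becomes $\|e^{2\pi i\widetilde\Phi_n}-\bar\mu\|_{L^2(\T)}\to0$. Since $\|\widetilde\Phi_n\|_\infty\le\tfrac18<\tfrac12$, $\bar\mu$ lies on the arc $\{e^{2\pi it}:|t|\le\tfrac18\}$, on which $t\mapsto e^{2\pi it}$ is bi-Lipschitz; hence $\widetilde\Phi_n\to\nu$ in $L^2(\T)$ for a real constant $\nu$, so $\widehat{\widetilde\Phi_n}(1)\to0$, contradicting (iii). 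Thus $\lambda e^{2\pi if}$ is not a coboundary, and as $\lambda\in\mathbb S^1$ was arbitrary the theorem follows.

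I expect the only genuine obstacle to be conceptual: recognizing that the ``period'' $q_n$ is the wrong scale and that one should test against $N_n=m_nq_n$ with $m_n\asymp1/(q_n|\widehat f(q_n)|)$. This choice is essentially forced — $m_n$ must be large enough to boost the $q_n$-harmonic of $f^{(N_n)}$ to unit size, which is exactly where~\eqref{eq:28} is needed (to keep the amplified profile $\widetilde\Phi_n$ uniformly small at the same time), yet small enough that both error terms, the drift $\|m_nq_n\alpha\|$ and the accumulated smoothing error $m_n\|\eta_n\|_\infty$ (both governed by~\eqref{eq:29}), still vanish. Simultaneously balancing these competing estimates is the heart of the matter; the remaining steps are routine Fourier bookkeeping.
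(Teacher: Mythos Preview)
The paper does not prove this theorem; it is quoted from \cite{katokbook2003} and \cite{MR1286834} and used only as a black box in the proof of Theorem~\ref{tw:25}. Your argument is correct and is essentially the classical one from those references: exploit rigidity along the times $N_n=m_nq_n$ with $m_n\asymp 1/(q_n|\widehat f(q_n)|)$, decompose $f^{(N_n)}$ into a resonant profile $\widetilde\Phi_n(q_n\,\cdot)$ of uniformly small amplitude whose first Fourier coefficient stays bounded away from~$0$, plus a vanishing error, and contradict $\varphi^{(N_n)}\to1$ in $L^2$. The three competing estimates (i)--(iii) and the final bi-Lipschitz passage from $e^{2\pi i\widetilde\Phi_n}\to\bar\mu$ to $\widetilde\Phi_n\to\nu$ all check out; your identification of the scale $m_n$ as the heart of the matter is exactly right, and is precisely where hypotheses~\eqref{eq:28} and~\eqref{eq:29} are each used.
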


We will now prove a modified version of the above theorem. It will be our main tool in course of the proof of Theorem~\ref{thm:main}.
\begin{tw}\label{tw:25}
Suppose that $f(x)=\sum_{n\in\Z}\widehat{f}(n) e^{2\pi i nx}$ is in $C^{1+\delta}(\T)$ for some $\delta>0$, and has zero mean. Denote by $T\colon \T\to \T$ the irrational rotation $x\mapsto x+\alpha$. Let $r,s\in\N$ ($r>s$) be relatively prime. Assume that $(p_n/q_n)_{n\in\N}$ is a subsequence of convergents of $\alpha$ in its continued fraction expansion such that
\begin{equation}\label{eq:28a}
\frac{|\widehat{f}(q_n)|}{\sum_{k\geq 1}|\widehat{f}(kq_n)|}>c_0>0,
\end{equation}
\begin{equation}\label{eq:28b}
\frac{|\widehat{f}(q_n)|}{|\widehat{f}(q_n)|+\sum_{k\geq 1}|\widehat{f}({k\frac{r}{s}q_n})|}>c_0>0,\text{ whenever }s|q_n,
\end{equation}
and
\begin{equation}\label{eq:29a}
\frac{\left| \alpha-\frac{p_n}{q_n}\right|q_n}{|\widehat{f}(q_n)|}\to 0.
\end{equation}
Then for each $\lambda\in\mathbb{S}^1$, $h\in\R$ and $A,B\in \R$ with $A^2+B^2\neq 0$ the cocycle $\lambda e^{2\pi i (Af^{(r)}(rx)+Bf^{(s)}(sx+h))}$ is not a $T$-coboundary.
\end{tw}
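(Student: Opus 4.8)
The plan is to apply Theorem~\ref{thm:katok} to the single real function
$$
g(x):=Af^{(r)}(rx)+Bf^{(s)}(sx+h),\qquad\text{where}\quad f^{(k)}(y)=\sum_{i=0}^{k-1}f(y+i\alpha).
$$
As a finite sum of $f$ precomposed with affine maps, $g$ again belongs to $C^{1+\delta}(\T)$ and has zero mean, so it is eligible; and $\lambda e^{2\pi ig}$ is the cocycle in question. Its Fourier coefficients, computed from $f=\sum_m\widehat f(m)e^{2\pi imx}$, are
$$
\widehat g(m)=A\,\widehat f(m/r)\Big(\sum_{i=0}^{r-1}e^{2\pi i(m/r)i\alpha}\Big)\mathbbm{1}_{r\mid m}+B\,\widehat f(m/s)e^{2\pi i(m/s)h}\Big(\sum_{i=0}^{s-1}e^{2\pi i(m/s)i\alpha}\Big)\mathbbm{1}_{s\mid m},
$$
so the spectrum of $g$ lies on $r\Z\cup s\Z$. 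Since the $p_n/q_n$ are convergents, $\|q_n\alpha\|\to0$, whence each inner exponential sum, evaluated at a multiple of $q_n$, tends to $r$, resp.\ $s$; thus the $f^{(r)}$-part puts $\approx rA\widehat f(q_n)$ into $\widehat g(rq_n)$ and the $f^{(s)}$-part puts $\approx sB\widehat f(q_n)$ into $\widehat g(sq_n)$. (Equivalently, the $q_n$-th Birkhoff sum of $g$ over $T$ is exactly $Af^{(rq_n)}(rx)+Bf^{(sq_n)}(sx+h)$, which is the object a direct Katok-type argument would analyse.)

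For the approximating rationals I would take $P_n/Q_n=rp_n/(rq_n)$ (denominator $rq_n$) when the $f^{(r)}$-block is used, and $P_n/Q_n=sp_n/(sq_n)$ when the $f^{(s)}$-block is used; then $|\alpha-P_n/Q_n|Q_n$ equals $r\|q_n\alpha\|$, resp.\ $s\|q_n\alpha\|$, so the Diophantine condition~\eqref{eq:29} for $g$ will follow from~\eqref{eq:29a} as soon as $|\widehat g(Q_n)|\gg\|q_n\alpha\|$. Passing to a subsequence, one of the following holds for all $n$: (i)~$A=0$; (ii)~$s\nmid q_n$; (iii)~$s\mid q_n$ but $r\nmid q_n$; (iv)~$rs\mid q_n$. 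In cases (i) and (iii) take $Q_n=sq_n$, in case (ii) take $Q_n=rq_n$. In each of these three cases $\widehat g(Q_n)$ has a single dominant summand ($\approx rA\widehat f(q_n)$ or $\approx sB\widehat f(q_n)$, of modulus $\gtrsim|\widehat f(q_n)|$), while every multiple $kQ_n$ receiving a nonzero contribution receives it from a coefficient $\widehat f(\cdot)$ at a multiple of $q_n$ (in (ii) the $f^{(s)}$-part only reaches $krq_n$ when $s\mid k$, landing on the index $(k/s)rq_n$; in (iii) the $f^{(r)}$-part only reaches $ksq_n$ when $r\mid k$, landing on $(k/r)sq_n$). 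Hence $\sum_{k\ge1}|\widehat g(kQ_n)|\le(|A|r+|B|s)\sum_{k\ge1}|\widehat f(kq_n)|$, so~\eqref{eq:28} follows from~\eqref{eq:28a}, and then~\eqref{eq:29} follows from~\eqref{eq:29a}; Theorem~\ref{thm:katok} shows $\lambda e^{2\pi ig}$ is not a $T$-coboundary.

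The remaining case~(iv), $rs\mid q_n$ with \emph{both} $A\neq0$ and $B\neq0$, is the crux, and it is here that $r>s$ and condition~\eqref{eq:28b} are needed. Take $Q_n=rq_n$; the $f^{(s)}$-part now also reaches $rq_n$, adding the single term $B\widehat f(\tfrac rs q_n)e^{2\pi i\frac rs q_nh}\sum_{i=0}^{s-1}e^{2\pi i\frac rs q_n i\alpha}$, whose index $\tfrac rs q_n$ lies \emph{above} $q_n$ precisely because $r>s$. Condition~\eqref{eq:28b} is exactly the bound $\sum_{k\ge1}|\widehat f(k\tfrac rs q_n)|\le\tfrac{1-c_0}{c_0}|\widehat f(q_n)|$, which keeps both this extra term and the extra contributions to $\sum_{k\ge1}|\widehat g(krq_n)|$ below a constant multiple of $|\widehat f(q_n)|$, so~\eqref{eq:28}--\eqref{eq:29} still go through \emph{unless} the extra term essentially cancels the main term $\approx rA\widehat f(q_n)$. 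Ruling out (or absorbing) this cancellation is the main obstacle I foresee. The plan is a dichotomy: either no such cancellation occurs along a subsequence, and we finish as above; or the cancellation forces $|\widehat f(\tfrac rs q_n)|\asymp|\widehat f(q_n)|$ and $\big|\sum_{i=0}^{s-1}e^{2\pi i\frac rs q_n i\alpha}\big|$ bounded below --- whence $\tfrac rs q_n$ is itself a good frequency (its~\eqref{eq:28a}-type bound being supplied by~\eqref{eq:28b}) --- and one re-runs the whole argument with $\tfrac rs q_n$ in the role of $q_n$, using $Q_n=\tfrac{r^2}{s}q_n$ via the $f^{(r)}$-block; the delicate point is to extract from the cancellation a rational with denominator comparable to $\tfrac rs q_n$ that approximates $\alpha$ sharply enough relative to $|\widehat f(\tfrac rs q_n)|$. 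The situation $A=0$, $B\neq0$ (like every case with $r\mid q_n$ but $s\nmid q_n$) is the exact mirror, handled by the $f^{(s)}$-block at $sq_n$ --- which is why no condition referring to ``$r\mid q_n$'' is needed among the hypotheses.
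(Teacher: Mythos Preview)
Your treatment of cases (i)--(iii) is correct and in fact more direct than the paper's: for its analogue of (ii)--(iii) (infinitely many $q_n$ with $r\nmid q_n$ or $s\nmid q_n$) the paper does not apply Theorem~\ref{thm:katok} to $g$ at all, but instead observes that if $\lambda e^{2\pi ig}$ were a coboundary then, by the $\tfrac1r$-periodicity of $f^{(r)}(r\,\cdot)$, so would $e^{2\pi iB(f^{(s)}(sx)-f^{(s)}(sx+s/r))}$, and applies Theorem~\ref{thm:katok} to this difference. Your single-block argument at $Q_n=rq_n$ or $sq_n$ avoids that detour. (One organisational slip: your case (iii) tacitly needs $B\neq0$; the case $A\neq0$, $B=0$ should be split off symmetrically to (i), using $Q_n=rq_n$ throughout.)

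The genuine gap is case (iv). Your dichotomy is not completed, and the ``delicate point'' you flag --- producing a good rational approximation with denominator comparable to $\tfrac{r}{s}q_n$ --- is not obviously feasible, since $\tfrac{r}{s}q_n>q_n$ need not be near any convergent denominator. The missing idea is that the cancellation you fear cannot occur: the interfering exponential sum is not merely bounded, it tends to zero. Writing it as a geometric series,
\[
\sum_{i=0}^{s-1}e^{2\pi i\frac{r}{s}q_n i\alpha}=\frac{1-e^{2\pi i rq_n\alpha}}{1-e^{2\pi i\frac{r}{s}q_n\alpha}},
\]
the numerator goes to $0$ since $\|q_n\alpha\|\to0$, while the denominator stays bounded away from $0$. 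Indeed, since $p_n/q_n$ is a convergent, $\gcd(p_n,q_n)=1$; as $s\mid q_n$ and $s\geq2$ this forces $s\nmid p_n$, hence $s\nmid rp_n$, so $\|\tfrac{r}{s}p_n\|\ge\tfrac1s$; and $\tfrac{r}{s}q_n\alpha$ differs from $\tfrac{r}{s}p_n$ by $\tfrac{r}{s}(q_n\alpha-p_n)\to0$. (The paper gives a variant of this via the best-approximation property.) Combined with $|\widehat f(\tfrac{r}{s}q_n)|\le c_0^{-1}|\widehat f(q_n)|$ from~\eqref{eq:28b}, the cross term is $o(|\widehat f(q_n)|)$, so $|\widehat g(rq_n)|\gtrsim|\widehat f(q_n)|$ outright and your argument for~\eqref{eq:28}--\eqref{eq:29} goes through with no dichotomy needed.
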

\begin{proof}
Fix $A,B\in\R$ with $A^2+B^2\neq 0$, relatively prime numbers $r>s$ and $h\in\R$. Set
$$
F(x):=Af^{(r)}(rx)+Bf^{(s)}(sx+h).
$$
For any $k\in \N$ we have
$$
f^{(k)}(x)=\sum_{n\in\Z}\widehat{f}(n)\frac{1-e^{2\pi i n k \alpha}}{1-e^{2\pi i n \alpha}}e^{2\pi i n x}.
$$
Therefore,
\begin{equation}\label{eq:ftilde}
F(x)
=A\sum_{n\in \Z} \widehat{f}(n) \frac{1-e^{2\pi i n r \alpha}}{1-e^{2\pi i n \alpha}}e^{2\pi i nr x}+B\sum_{n\in \Z}\widehat{f}(n)\frac{1-e^{2\pi i n s \alpha}}{1-e^{2\pi i n\alpha}}e^{2\pi i n (sx+h)}.
\end{equation}

Suppose first that $A\cdot B=0$. We may assume without loss of generality that $A=0$ and $B\neq 0$. Then, for all $n\in \N$, by~\eqref{eq:ftilde}, we have
$$
\widehat{F}({sn})=B\widehat{f}(n)\frac{1-e^{2\pi i ns\alpha}}{1-e^{2\pi i n \alpha}}e^{2\pi i nh}.
$$
Therefore, since
\begin{equation}\label{EQ:graniczne}
\left|\frac{1-e^{2\pi i q_ns\alpha}}{1-e^{2\pi i q_n \alpha}}\right|\to s\text{ as }n\to \infty,
\end{equation}
\begin{equation}\label{EQ:1}
|\widehat{F}({sq_n})|\geq |B|\cdot\frac{s}{2}\cdot |\widehat{f}(q_n)| \text{ for $n$ sufficiently large}
\end{equation}
and
\begin{equation}\label{EQ:22}
|\widehat{F}(msq_n)|=|B|\cdot|\widehat{f}(mq_n)|\cdot \left|\frac{1-e^{2\pi i msq_n\alpha}}{1-e^{2\pi i m q_n\alpha}} \right|\leq |B|\cdot s\cdot |\widehat{f}(mq_n)|
\end{equation}
for all $n\in \N$, $m\in\Z$. It follows immediately by~\eqref{EQ:1} and~\eqref{EQ:22} that for $n$ sufficiently large
\begin{equation}\label{EQ:1a}
\frac{|\widehat{F}(sq_n)|}{\sum_{m\geq 1}|\widehat{F}(msq_n)|}\geq\frac{|B|\cdot\frac{s}{2}\cdot |\widehat{f}(q_n)|}{\sum_{m\geq 1}|B|\cdot s \cdot |\widehat{f}(mq_n)|}=\frac{|\widehat{f}(q_n)|}{2\sum_{m\geq 1}|\widehat{f}(mq_n)|}
\end{equation}
\begin{equation}\label{EQ:2a}
\frac{\left|\alpha-\frac{sp_n}{sq_n}\right|sq_n }{|\widehat{F}(sq_n)|}\leq \frac{2}{|B|}\cdot \frac{\left|\alpha-\frac{p_n}{q_n}\right|q_n }{|\widehat{f}(q_n)|}.
\end{equation}
Now,~\eqref{EQ:1a} and~\eqref{EQ:2a} and the assumptions~\eqref{eq:28a} and~\eqref{eq:29a} imply that for $n$ sufficiently large
$$
\frac{|\widehat{F}(sq_n)|}{\sum_{m\geq 1}|\widehat{F}(msq_n)|}>\frac{c_0}{2}>0\text{ and }\frac{\left|\alpha-\frac{sp_n}{sq_n}\right|sq_n }{|\widehat{F}(sq_n)|}\to 0.
$$
In view of Theorem~\ref{thm:katok}, this completes the proof in case $A\cdot B=0$.

Suppose now that $A\cdot B\neq 0$. Applying the fact that for any absolutely summable sequence $(y_n)_{n\in\Z}$
$$
\sum_{n\in\Z}y_n=\sum_{r|n}y_n+\sum_{r\nmid n}y_n=\sum_{n\in\Z}y_{nr}+\sum_{r\nmid n}y_n=\sum_{s|n}y_{n\frac{r}{s}}+\sum_{r\nmid n}y_n\footnote{These equalities hold for all $r,s\in\N$.}
$$
to the second summand in formula~\eqref{eq:ftilde}, we obtain
\begin{multline}\label{eq:mm}
F(x)=\sum_{s| n}\underbrace{\left(A \widehat{f}(n)\frac{1-e^{2\pi i nr \alpha}}{1-e^{2\pi i n\alpha}}+B \widehat{f}(n\frac{r}{s})\frac{1-e^{2\pi i nr \alpha}}{1-e^{2\pi i n\frac{r}{s}\alpha}}e^{2\pi i n\frac{r}{s}h} \right)}_{a_{rn}}e^{2\pi i nrx}\\
+A\sum_{s\nmid n}\widehat{f}(n)\frac{1-e^{2\pi i rn \alpha}}{1-e^{2\pi i n \alpha}}e^{2\pi i n rx}+B\sum_{r\nmid n}\widehat{f}(n) \frac{1-e^{2\pi i ns \alpha}}{1-e^{2\pi i n \alpha}}e^{2\pi i n h}e^{2\pi i ns x}.
\end{multline}
For $m\in \N$, let $B_m:=\{n\in\N \colon m\nmid q_n\}$. We will consider the following cases:
\begin{enumerate}[(i)]
\item\label{A1}
$B_r$ or $B_s$ is infinite,
\item\label{A2}
both sets $B_r$ and $B_s$ are finite.
\end{enumerate}
We will cover first case~\eqref{A1}. Without loss of generality we may assume that $B_r$ is infinite.
Suppose that
\begin{equation}\label{eq:n1}
\lambda e^{2\pi i (Af^{(r)}(rx)+Bf^{(s)}(sx+h))} \text{ is a coboundary}.
\end{equation}
Since $f^{(r)}(rx)$ is $\frac{1}{r}$-periodic, it follows immediately that
\begin{equation}\label{eq:n2}
\lambda e^{2\pi i (Af^{(r)}(rx)+Bf^{(s)}(sx+\frac{s}{r}+h))}\text{ is also a coboundary. }
\end{equation}
Hence, dividing the expression from~\eqref{eq:n1} by the one from~\eqref{eq:n2}, we conclude that
\begin{equation}\label{eq:kobrzeg}
e^{2\pi i B(f^{(s)}(sx)-f^{(s)}(sx+\frac{s}{r}))}\text{ is a coboundary as well.}
\end{equation}
We will show that this is impossible in view of Theorem~\ref{thm:katok}. Indeed, we have
$$
g(x):=f^{(s)}(sx)-f^{(s)}\left(sx+\frac{s}{r}\right)=\sum_{n\in\Z}\widehat{f}(n)\frac{1-e^{2\pi i ns \alpha}}{1-e^{2\pi i n \alpha}}(1-e^{2\pi i n\frac{s}{r}})e^{2\pi i n s x}.
$$
We claim that for $n\in B_r$,
\begin{equation}\label{eq:9}
\frac{|\widehat{g}(sq_n)|}{\sum_{k\geq 1}|\widehat{g}(ksq_n)|}>c_1 \text{ for some }c_1>0
\end{equation}
and
\begin{equation}\label{eq:10}
\frac{\left| \alpha-\frac{sp_n}{sq_n}\right|sq_n}{|\widehat{g}(sq_n)|}\to 0.
\end{equation}
Indeed, for $n\in B_r$, we have
\begin{equation}\label{EQ:2}
|\widehat{g}(sq_n)|=|\widehat{f}(q_n)|\cdot \left|\frac{1-e^{2\pi i s q_n\alpha}}{1-e^{2\pi i q_n \alpha}} \right|\cdot |e^{2\pi i q_n\frac{s}{r}}-1|.
\end{equation}
Since $r\nmid q_n$ and $r$ and $s$ are relatively prime,
\begin{equation}\label{EQ:3}
\left|e^{2\pi i q_n\frac{s}{r}}-1 \right|\geq \left|e^{2\pi i \frac{1}{r}}-1 \right| =:c_2>0.
\end{equation}
Using again~\eqref{EQ:graniczne}, we obtain from~\eqref{EQ:2} and~\eqref{EQ:3} that for $n$ sufficiently large,
\begin{equation}\label{eq:a}
|\widehat{g}(sq_n)|\geq \frac{c_2s}{2} \cdot|\widehat{f}(q_n)|.
\end{equation}
On the other hand, since $\left|\frac{1-e^{2\pi i k s q_n \alpha}}{1-e^{2\pi i k q_n \alpha}} \right|\leq s$,
\begin{align}
\begin{split}\label{eq:b}
\sum_{k\geq 1}|\widehat{g}(ksq_n)|&=\sum_{k\geq 1}|\widehat{f}(kq_n)|\cdot\left|\frac{1-e^{2\pi i k s q_n \alpha}}{1-e^{2\pi i k q_n \alpha}} \right| \cdot |e^{2\pi i q_n\frac{s}{r}}-1|\\
&\leq \sum_{k\geq 1}|\widehat{f}(kq_n)|\cdot s \cdot |e^{2\pi i q_n\frac{s}{r}}-1|\leq 2s\sum_{k\geq 1}|\widehat{f}(kq_n)|.
\end{split}
\end{align}
Thus, using~\eqref{eq:a},~\eqref{eq:b} and~\eqref{eq:28a} we obtain, for $n$ large enough,
$$
\frac{|\widehat{g}(sq_n)|}{\sum_{k\geq 1}|\widehat{g}(ksq_n)|}\geq \frac{c_2 s |\widehat{f}(q_n)|}{4s\sum_{k\geq 1}|\widehat{f}(kq_n)|}>\frac{c_0c_2}{4},
$$
which shows that~\eqref{eq:9} holds. Using~\eqref{eq:a} and~\eqref{eq:29a}, we obtain that~\eqref{eq:10} also holds:
$$
\frac{\left| \alpha-\frac{sp_n}{sq_n}\right|sq_n}{|\widehat{g}(sq_n)|}\leq \frac{2\left| \alpha-\frac{p_n}{q_n}\right|sq_n}{c_2 s|\widehat{f}(q_n)|}=\frac{2}{c_2}\cdot\frac{\left| \alpha-\frac{p_n}{q_n}\right|q_n}{|\widehat{f}({q_n})|}\to 0.
$$
It follows from Theorem~\ref{thm:katok} that $\lambda e^{2\pi i B(f^{(s)}(sx)-f^{(s)}(sx+\frac{s}{r}))}$ cannot be a coboundary, contrary to~\eqref{eq:kobrzeg}. This completes the proof in case (i).

We cover now case~\eqref{A2}. We claim that for $n\not\in B_s$ (i.e.\ $n$ such that $s|q_n$), for some $c_3>0$, we have
\begin{equation}\label{28d}
\frac{|a_{rq_n}|}{\sum_{m\geq 1}|a_{mrq_n}|}>c_3>0
\end{equation}
and
\begin{equation}\label{29d}
\frac{|\alpha-\frac{rp_n}{rq_n}|rq_n}{|a_{rq_n}|}\to 0.
\end{equation}
To prove~\eqref{28d}, we will estimate $|a_{rq_n}|$ from below and $\sum_{m\geq 1}|a_{mrq_n}|$ from above in an appropriate way.
We begin by estimating $|a_{rq_n}|$. We have
\begin{multline}\label{eq:kropa1}
|a_{rq_n}|\geq |A|\cdot |\widehat{f}(q_n)|\cdot \left|\frac{1-e^{2\pi i rq_n\alpha}}{1-e^{2\pi i q_n\alpha}} \right|-|B|\cdot \left|\widehat{f}\left(q_n\frac{r}{s}\right)\right|\cdot \left|\frac{1-e^{2\pi i q_nr\alpha}}{1-e^{2\pi i q_n\frac{r}{s}}\alpha} \right|\\
\geq |A|\cdot |\widehat{f}(q_n)|\cdot \frac{r}{2}-|B|\cdot\left|\widehat{f}\left(q_n\frac{r}{s}\right)\right|\cdot\left|\frac{1-e^{2\pi i q_nr\alpha}}{1-e^{2\pi i q_n\frac{r}{s}\alpha}} \right|,
\end{multline}
where the latter inequality follows from~\eqref{EQ:graniczne} and is valid for $n$ sufficiently large.
It follows by~\eqref{eq:28b} that for $n\not\in B_s$,
\begin{equation}\label{eq:kropa2}
\left|\widehat{f}\left(q_n\frac{r}{s}\right)\right|<\frac{1}{c_0}|\widehat{f}(q_n)|.
\end{equation}
We will now estimate $|1-e^{2\pi i q_n\frac{r}{s}\alpha} |$. Notice that
\begin{equation}\label{EQ:4}
\frac{2}{\pi}<\frac{|e^{2\pi i x}-e^{2\pi i y}|}{|x-y|}<1
\end{equation}
for $x,y\in\R$ such that $0<|x-y|<1$. Since $|1-e^{2\pi i q_n\alpha}|\to 0$, for $n$ sufficiently large,
\begin{equation*}\label{EQ:5}
\left|1-e^{2\pi i q_n\alpha} \right|<\frac{s}{\pi r}\left|1-e^{2\pi i \frac{1}{s}} \right|.
\end{equation*}
Therefore and by~\eqref{EQ:4}, for such $n$, we have
\begin{align}
\begin{split}\label{eq:nn}
\left|e^{2\pi i \frac{[q_n\alpha]}{s}}-e^{2\pi i q_n\frac{1}{s}\alpha}\right|&< \left|\frac{[q_n\alpha]}{s}-q_n\frac{1}{s}\alpha\right|=\frac{1}{s}\cdot |[q_n\alpha]-q_n\alpha|\\
&<\frac{\pi}{2s}\cdot |e^{2\pi i [q_n\alpha]}-e^{2\pi i q_n\alpha}|=\frac{\pi}{2s}\cdot |1-e^{2\pi i q_n\alpha}|\\
&<\frac{\pi}{2s}\cdot\frac{s}{\pi r}\cdot |1-e^{2\pi i \frac{1}{s}}|=\frac{1}{2r}\cdot \left|1-e^{2\pi i \frac{1}{s}}\right|.
\end{split}
\end{align}
Since for all $x,y\in\R$ and all $k\in\N$
$$
|e^{2\pi i k x}-e^{2\pi i ky}|\leq k|e^{2\pi i x}-e^{2\pi i y}|,
$$
it follows from~\eqref{eq:nn} that
$$
|e^{2\pi i \frac{[q_n\alpha]r}{s}}-e^{2\pi i q_n\frac{r}{s}}\alpha|<\frac{1}{2}|1-e^{2\pi i \frac{1}{s}}|.
$$
Since $q_n$ is a denominator of $\alpha$ and $s|q_n$, we have $|c-\frac{q_n}{s}\alpha|>|p_n-q_n\alpha|$ for all $0\leq c\leq \frac{q_n}{s}$ (see\eqref{eq:best} in Section~\ref{se:rot}). Using this inequality and~\eqref{EQ:4}, we obtain
\begin{equation}\label{EQ:6}
\frac{\pi}{2}|1-e^{2\pi i \frac{q_n}{s}\alpha}|>|c-\frac{q_n}{s}\alpha|>|p_n-q_n\alpha|>|1-e^{2\pi i q_n\alpha}|.
\end{equation}
By the first two lines of~\eqref{eq:nn} and~\eqref{EQ:6}, we have
\begin{equation}\label{EQ:7}
\left|e^{2\pi i \frac{[q_n\alpha]}{s}}-e^{2\pi i q_n\frac{1}{s}\alpha} \right|<\frac{\pi}{2s}|1-e^{2\pi i q_n\alpha}|<\frac{\pi}{2s}\cdot\frac{\pi}{2}|1-e^{2\pi i \frac{q_n}{s}\alpha}|.
\end{equation}
Suppose that $s|[q_n\alpha]$. Then~\eqref{EQ:7} implies $1<\frac{\pi}{2s}\cdot\frac{\pi}{2}$, i.e. $s<(\pi/2)^2$. This is however impossible, as $s\geq 3$. Therefore $s\nmid [q_n\alpha]$, which implies $s\nmid [q_n\alpha]r$, i.e.
$$
\left|1-e^{2\pi i \frac{[q_n\alpha]r}{s}} \right|\geq |1-e^{2\pi i \frac{1}{s}}|.
$$
Therefore
\begin{multline*}
\left|1-e^{2\pi i q_n\frac{r}{s}\alpha} \right|\geq \left|1-e^{2\pi i \frac{[q_n\alpha]r}{s}}\right|-\left|e^{2\pi i \frac{[q_n\alpha]r}{s}}-e^{2\pi i q_n\frac{r}{s}\alpha}\right|\\
\geq \left|1-e^{2\pi i \frac{[q_n\alpha]r}{s}} \right|-\frac{1}{2}\left|1-e^{2\pi i \frac{1}{s}} \right|\geq \frac{1}{2}\left|1-e^{2\pi i \frac{1}{s}} \right|=:c_4>0.
\end{multline*}
It follows that
$$
\left|\frac{1-e^{2\pi i rq_n\alpha}}{1-e^{2\pi i q_n\frac{r}{s}\alpha}} \right|\leq \frac{1}{c_4}|1-e^{2\pi i rq_n\alpha} |\leq \frac{r}{c_4}|1-e^{2\pi i q_n\alpha}|\to 0.
$$
This and~\eqref{eq:kropa1},~\eqref{eq:kropa2} imply that for $n$ sufficiently large
\begin{equation}\label{eq:nju}
|a_{rq_n}|\geq c_5 |\widehat{f}(q_n)|\text{ for some }c_5>0.
\end{equation}

We will estimate now $\sum_{m\geq 1}|a_{mrq_n}|$. By~\eqref{eq:28a} and~\eqref{eq:28b} we have
$$
\sum_{m\geq 1}|\widehat{f}(mq_n)|<\frac{1}{c_0}|\widehat{f}(q_n)|\text{ and }\sum_{m\geq 1}\left|\widehat{f}\left(mq_n\frac{r}{s}\right)\right|<\frac{1}{c_0}|\widehat{f}(q_n)|.
$$
Hence
$$
\sum_{m\geq 1}|a_{mrq_n}|\leq \frac{|A|r+|B|s}{c_0}\cdot |\widehat{f}(q_n)|.
$$
Using this estimate and~\eqref{eq:nju}, we obtain
$$
\frac{|a_{rq_n}|}{\sum_{m\geq 1}|a_{mrq_n}|}\geq\frac{c_5|\widehat{f}(q_n)|}{\frac{|A|r+|B|s}{c_0}|\widehat{f}(q_n)|}=\frac{c_0c_5}{|A|r+|B|s}>0
$$
and~\eqref{28d} follows. Notice that~\eqref{eq:nju} together with~\eqref{eq:29a} implies that also~\eqref{29d} is true. By Theorem~\ref{thm:katok}, we conclude that $\lambda e^{2\pi i F(x)}$ cannot be a coboundary, which completes the proof in case~\eqref{A2}.
\end{proof}

\begin{uw}\label{uw:27}
Recall that if $f\in C^{1+\delta}(\T)$ for some $\delta>0$ then $\widehat{f}(n)={\rm o}(1/n^{1+\delta'})$ for $0<\delta'<\delta$ (the speed of convergence to zero depends on $\delta'$).
\end{uw}

\begin{lm}[cf. Lemma 4 in~\cite{MR1286834}]\label{lm:4a}
Let $g\colon \N\to (0,\infty)$ be a non-increasing positive function such that $g(mn)\leq g(m)g(n)$ for all $m,n\in\N$ and $\sum_{m\geq 1} g(m)=C<\infty$.  Let $(x_n)_{n\in\N}\subset [0,\infty)$ be a summable sequence such that $x_n=o(g(n))$, $n\in\N$. Let $(x_{n_k})_{k\in\N}$ be a subsequence of $(x_n)_{n\in\N}$ such that $x_{n_k}>0$. Let $b\geq 1$ and let
$$
\vep_k=\frac{x_{n_k}}{x_{n_k}+\sum_{m\geq 1}x_{m[bn_k]}}.
$$
Then $\vep_k\not\to 0$.
\end{lm}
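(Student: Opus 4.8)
The statement is a variant of a contradiction argument: we want to show that $\vep_k$ cannot tend to $0$, equivalently that there is no subsequence along which $\sum_{m\geq 1} x_{m[bn_k]}/x_{n_k} \to \infty$. So the plan is to argue by contradiction: assume $\vep_{k_j}\to 0$ along some subsequence, which means
$$
\frac{\sum_{m\geq 1} x_{m[bn_{k_j}]}}{x_{n_{k_j}}}\longrightarrow \infty .
$$
First I would record the two structural facts available: (1) $x_n = o(g(n))$, so for any $\eta>0$ we have $x_n\leq \eta\, g(n)$ once $n$ is large; and (2) the submultiplicativity $g(mn)\leq g(m)g(n)$, together with monotonicity of $g$, lets us bound $g(m[bn_k])\leq g(m)\,g([bn_k])$. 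Combining these, for $n_k$ large,
$$
\sum_{m\geq 1} x_{m[bn_k]}\leq \eta \sum_{m\geq 1} g(m[bn_k])\leq \eta\, g([bn_k]) \sum_{m\geq 1} g(m)=\eta\, C\, g([bn_k]).
$$

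The key remaining point is to compare $g([bn_k])$ with $x_{n_k}$ from below. Here is where I expect the main obstacle: $x_{n_k}$ is only known to be $o(g(n_k))$, not bounded below by a multiple of $g(n_k)$, so a direct comparison $g([bn_k])\lesssim g(n_k)\lesssim x_{n_k}$ is unavailable. The way around this is to use that $b\geq 1$ is \emph{fixed}: since $[bn_k]\leq b n_k$, submultiplicativity in the other direction is not automatic, but we can instead bound $g([bn_k])$ by comparing with $g(n_k)$ via a bounded number of factors. Precisely, if $b\geq 1$, pick an integer $\ell\geq b$; then $[bn_k]\leq \ell n_k$, so by monotonicity $g([bn_k])\geq g(\ell n_k)$ — wrong direction — so instead I would go the other way: $g$ non-increasing gives $g([bn_k])\leq g(n_k)$ whenever $[bn_k]\geq n_k$, which holds since $b\geq 1$. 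Hence
$$
\sum_{m\geq 1} x_{m[bn_k]}\leq \eta\, C\, g(n_k).
$$
Now the genuinely delicate step: we must turn $g(n_k)$ back into something controlled by $x_{n_k}$, but in general $x_{n_k}/g(n_k)$ can be tiny. The resolution — and the reason the subsequence $(x_{n_k})$ with $x_{n_k}>0$ is singled out — is that we do \emph{not} need a lower bound on $x_{n_k}$ pointwise; rather, we need only that the ratio $\sum_{m} x_{m[bn_k]}/x_{n_k}$ stays bounded along \emph{some} subsequence, which will contradict $\vep_{k_j}\to 0$. So I would argue: were $\vep_{k_j}\to 0$, then $x_{n_{k_j}} = o\!\big(\sum_{m\geq 1} x_{m[bn_{k_j}]}\big)$; but the tail sum is itself part of the summable series $\sum_m x_m$, and one shows (using that the indices $m[bn_{k_j}]$ are distinct for distinct $m$ and grow, plus summability) that $\sum_{m\geq 1} x_{m[bn_{k_j}]}\to 0$ as $j\to\infty$. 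Combined with the upper bound $\leq \eta C g(n_{k_j})$ and a careful choice of $\eta\to 0$, one derives that $x_{n_{k_j}}$ and the competing sum are comparable, contradicting $\vep_{k_j}\to 0$.

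In summary, the steps in order are: (i) reduce to showing $\sum_{m\geq 1} x_{m[bn_k]} \leq (1/c)\, x_{n_k}$ cannot fail along a full subsequence; (ii) use $x_n=o(g(n))$ and submultiplicativity of $g$ to get $\sum_{m\geq 1} x_{m[bn_k]}\leq \eta C\, g([bn_k])$ for any $\eta>0$ and $k$ large; (iii) use $b\geq 1$ and monotonicity of $g$ to replace $g([bn_k])$ by $g(n_k)$; (iv) feed in $x_{n_k}=o(g(n_k))$ \emph{in the precise quantitative form} $\vep_k = x_{n_k}/(x_{n_k}+S_k)$ with $S_k=\sum_m x_{m[bn_k]}$, choosing the implied constant $\eta$ in step (ii) to depend on $k$ so that $\eta_k\to 0$ matches $x_{n_k}/g(n_k)\to 0$, which forces $S_k = o(x_{n_k}\cdot g(n_k)/x_{n_k})$; (v) conclude $S_k/x_{n_k}$ stays bounded, hence $\vep_k\not\to 0$. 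I expect step (iv) — matching the decay rate $x_{n_k}/g(n_k)\to0$ against the factor $\eta_k$ in a way that actually yields a uniform lower bound on $\vep_k$ — to be the crux; it is essentially the same bookkeeping as in Lemma 4 of \cite{MR1286834}, and I would follow that argument closely, using Remark \ref{uw:27} to justify that the relevant $x_n$ (namely $|\widehat f(n)|$) indeed satisfies $x_n = o(g(n))$ for a suitable summable submultiplicative $g$ such as $g(n)=c/n^{1+\delta'}$ with $0<\delta'<\delta$.
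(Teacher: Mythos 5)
Your reduction is on the right track: submultiplicativity and monotonicity of $g$, together with $[bn_k]\geq n_k$, do give
$\sum_{m\geq 1}x_{m[bn_k]}\leq \eta\,C\,g(n_k)$ provided $x_n\leq\eta\,g(n)$ holds at all indices $n=m[bn_k]$ occurring in the sum, and you correctly isolate the crux: one must then bound $\eta\,g(n_k)$ by a constant times $x_{n_k}$. But your steps (iv)--(v) do not actually do this, and as written they are circular. The $\eta=\eta_k$ you are entitled to use must dominate $\sup\{x_n/g(n)\colon n\geq [bn_k]\}$ (or at least the ratios at the indices $m[bn_k]$, $m\geq 2$), whereas to conclude you need $\eta_k\leq {\rm const}\cdot x_{n_k}/g(n_k)$; there is no reason these two requirements are compatible for a given $k$, since $x_{n_k}/g(n_k)\to 0$ and may dip far below the running supremum of $x_n/g(n)$. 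The auxiliary observations --- that $\sum_{m}x_{m[bn_k]}\to 0$, that the indices $m[bn_k]$ are distinct --- are true but irrelevant, since both $x_{n_k}$ and the competing sum tend to $0$ and only their ratio matters; the sentence ``one derives that $x_{n_{k_j}}$ and the competing sum are comparable'' is precisely the assertion to be proved, and no mechanism for it is supplied.

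The missing idea is a \emph{selection} of indices rather than a contradiction: since $x_n/g(n)\to 0$ while infinitely many terms are positive, one can recursively choose $k_1<k_2<\cdots$ and $\delta_j>0$ with
$$
\frac{x_{n_{k_j}}}{g(n_{k_j})}>\delta_j\quad\text{and}\quad \frac{x_n}{g(n)}\leq\delta_j\ \text{ for all }n>n_{k_j},
$$
i.e.\ one works only at ``record'' indices of the ratio $x_n/g(n)$. At such an index the two competing requirements above are compatible by construction, and one gets
$$
x_{n_{k_j}}+\sum_{m\geq 1}x_{m[bn_{k_j}]}\leq 2x_{n_{k_j}}+\delta_j\,g([bn_{k_j}])\sum_{m\geq 2}g(m)\leq 2x_{n_{k_j}}+C\,\delta_j\,g(n_{k_j})<(C+2)\,x_{n_{k_j}},
$$
where the $m=1$ term is split off to cover the case $[bn_{k_j}]=n_{k_j}$, in which it is not controlled by $\delta_j$. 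Hence $\vep_{k_j}\geq 1/(C+2)$ along this subsequence, which is the assertion. Without this selection step your argument does not close.
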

\begin{proof}
We will choose a subsequence of $(\vep_k)_{k\in \N}$ recursively.
Let $k_1\geq 1$ and $\delta_1>0$ be such that
\begin{equation*}
\frac{x_{n_{k_1}}}{g(n_{k_1})}>\delta_1 \text{ and }\frac{x_n}{g(n)}\leq \delta_1 \text{ for }n>n_{k_1}.
\end{equation*}
Suppose first that $[bn_{k_1}]=n_{k_1}$. Then
\begin{multline*}
x_{n_{k_1}}+\sum_{m\geq 1}x_{m[bn_{k_1}]}=2x_{n_{k_1}}+\sum_{m\geq 2}x_{m[bn_{k_1}]}\leq 2x_{n_{k_1}}+\sum_{m\geq 2}g(m[bn_{k_1}])\delta_1\\
\leq 2x_{n_{k_1}}+g([bn_{k_1}])\delta_1\sum_{m\geq 2}g(m)\leq 2x_{n_{k_1}}+g(n_{k_1})\delta_1C<(C+2)x_{n_{k_1}}.
\end{multline*}
Suppose now that $[bn_{k_1}]>n_{k_1}$. In a similar way as before, we obtain
\begin{equation*}
x_{n_{k_1}}+\sum_{m\geq 1}x_{m[bn_{k_1}]}\leq x_{n_{k_1}}+g(n_{k_1})\delta_1\sum_{m\geq 1}g(m)<(C+1)x_{n_{k_1}}.
\end{equation*}

Once we have chosen $k_1,\dots, k_j$, we pick $k_{j+1}>k_j$ and $\delta_{j+1}>0$ such that
$$
\frac{x_{n_{k_{j+1}}}}{g(n_{k_{j+1}})}>\delta_{j+1} \text{ and }\frac{x_n}{g(n)}\leq \delta_{j+1} \text{ for }n>n_{k_{j+1}}.
$$
As before, we obtain
$$
\sum_{m\geq 1}x_{m[bn_{k_{j+1}}]}<(C+2) x_{n_{k_{j+1}}}
$$
which completes the proof.
\end{proof}

\begin{uw}\label{uw:29}
Given $b_1,\dots,b_l\geq1$, under the assumptions of the above lemma, by a diagonalizing procedure we can find an increasing sequence $(n_k)\subset \N$ such that
$$
\vep_{n_k}(b_i) >c >0\text{ for $1\leq i\leq l$}.
$$
\end{uw}

\begin{wn}\label{wn:21-}
Suppose that $f\in C^{1+\delta}(\T)$ for some $\delta>0$ and it is not a trigonometric polynomial. Then for a generic $\alpha$ and any $A,B\in \R^2$ with $A^2+B^2\neq 0$, any relatively prime numbers $r$ and $s$, any $h\in \R$ and any $\lambda\in\T$, the cocycle $\lambda e^{2\pi i (Af^{(r)}(rx)+Bf^{(s)}(sx+h))}$ is not a $T$-coboundary for $Tx=x+\alpha$.
\end{wn}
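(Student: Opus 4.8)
The plan is to apply Theorem~\ref{tw:25}, whose hypotheses~\eqref{eq:28a}, \eqref{eq:28b}, \eqref{eq:29a} must be arranged to hold simultaneously for a generic $\alpha$; the genericity will come from a standard Baire-category argument, and the Diophantine/Fourier bookkeeping from Lemma~\ref{lm:4a} and Remark~\ref{uw:29}. First I would reduce to a countable situation. Since $r,s$ range over a countable set, and since $f^{(r)}(r\cdot)$ is $\tfrac1r$-periodic so that replacing $h$ by $h+\tfrac sr$ leaves the class of the cocycle unchanged (as exploited already in the proof of Theorem~\ref{tw:25}), it suffices --- after dividing two such cocycles --- to control countably many auxiliary cocycles; more directly, one observes that by Theorem~\ref{tw:25} the conclusion for a fixed $\alpha$ follows from~\eqref{eq:28a}, \eqref{eq:28b}, \eqref{eq:29a} alone, and the latter do not involve $h$, $\lambda$, $A$, $B$ at all, only $f$ and the pair $(r,s)$ (through the single ratio $r/s$). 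Hence what I must produce is a generic set of $\alpha$ for which, for every pair of coprime $r>s$, there is a subsequence of convergents $(p_n/q_n)$ of $\alpha$ satisfying those three conditions.

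Next I would set up the category argument. Write $g(n)=C_{\delta'}\,n^{-(1+\delta')}$ for a fixed $0<\delta'<\delta$, which by Remark~\ref{uw:27} dominates $|\widehat f(n)|$ up to a $o(1)$ factor and is submultiplicative with finite sum; apply Lemma~\ref{lm:4a} with $x_n=|\widehat f(n)|$ (nonzero along a subsequence because $f$ is not a trigonometric polynomial) and, via Remark~\ref{uw:29}, with the finitely many scalings $b=1$ and $b=r/s$ for $(r,s)$ in any prescribed finite list, to extract an increasing sequence $(n_k)$ along which both $\tfrac{|\widehat f(n_k)|}{\sum_{m\ge1}|\widehat f(mn_k)|}$ and $\tfrac{|\widehat f(n_k)|}{|\widehat f(n_k)|+\sum_{m\ge1}|\widehat f(m\lceil (r/s)n_k\rceil)|}$ stay above a positive constant. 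These are the $f$-dependent conditions~\eqref{eq:28a}--\eqref{eq:28b}; they hold along a fixed sequence of integers $n_k$ regardless of $\alpha$. The remaining condition~\eqref{eq:29a}, $\bigl|\alpha-\tfrac{p_n}{q_n}\bigr|q_n = o(|\widehat f(q_n)|)$, is the one that pins down $\alpha$: it asks that infinitely many of the $n_k$ actually occur as denominators of convergents of $\alpha$ with the convergent extremely close. For each $k$ and each $\epsilon>0$, the set of $\alpha$ having some convergent $p/q$ with $q\ge n_k$, $q$ among the $n_j$'s, and $|\alpha-p/q|q<\epsilon\,|\widehat f(q)|$ is open and dense (one can prescribe a long initial segment of the continued fraction expansion of $\alpha$ forcing a convergent with the required denominator, then make the next partial quotient large enough to force the closeness); intersecting over $k$ and over a sequence $\epsilon\downarrow0$ gives a dense $G_\delta$. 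Taking a further countable intersection over all coprime pairs $(r,s)$ (each contributing finitely many scalings handled by Remark~\ref{uw:29}, with a diagonalization over the list growing to exhaust all pairs) yields a dense $G_\delta$ set of $\alpha$ on which Theorem~\ref{tw:25} applies for every $(r,s)$, $h$, $\lambda$, $A$, $B$, which is exactly the assertion.

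The main obstacle I anticipate is the simultaneity across all pairs $(r,s)$: condition~\eqref{eq:28b} depends on $r/s$ through the scaling of the sparse-support sequence $(q_n)$, and Lemma~\ref{lm:4a}/Remark~\ref{uw:29} only let us handle \emph{finitely many} scalings along one common subsequence $(n_k)$. The fix is that genericity is a countable notion: I would enumerate the coprime pairs $(r,s)$ as $(r_j,s_j)_{j\ge1}$, and for the $j$-th pair run the density-plus-openness argument with the finite list $\{r_i/s_i : i\le j\}$ of scalings, producing an open dense set $\mathcal U_{j,k,\epsilon}$; the countable intersection $\bigcap_{j,k,\epsilon}\mathcal U_{j,k,\epsilon}$ is the desired residual set, since for any fixed $(r,s)=(r_{j_0},s_{j_0})$ every $\alpha$ in the intersection lies in all $\mathcal U_{j,k,\epsilon}$ with $j\ge j_0$, giving a subsequence of convergents meeting~\eqref{eq:28a}--\eqref{eq:29a} for that pair. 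A secondary technical point is verifying that one can genuinely \emph{force} a prescribed integer $n_k$ (or a nearby integer, to absorb the $\lceil (r/s)n_k\rceil$ in~\eqref{eq:28b}) to be a continued-fraction denominator with controlled approximation quality; this is routine from the theory of continued fractions (best approximation property, the estimate $\tfrac1{q_n(q_{n+1}+q_n)}<|\alpha-p_n/q_n|<\tfrac1{q_nq_{n+1}}$, and the freedom to choose partial quotients), but it is where one has to be slightly careful to keep the constructed $\alpha$ irrational and the whole family of constraints consistent.
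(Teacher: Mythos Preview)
Your approach is essentially correct and uses the same ingredients as the paper (Theorem~\ref{tw:25}, Lemma~\ref{lm:4a}, Remark~\ref{uw:29}, and the residuality statement~\eqref{eq:rezydualny}), but it is considerably more involved, and the complication stems from a misplaced worry. You treat the simultaneity over all coprime pairs $(r,s)$ as the main obstacle and try to build into the residual set of $\alpha$ the requirement that \emph{one} subsequence of convergents satisfy \eqref{eq:28a}--\eqref{eq:28b} for every pair; this forces the nested diagonalisation of your last paragraph. But Theorem~\ref{tw:25} does not ask for a subsequence that works uniformly in $(r,s)$: the subsequence may depend on the pair. The paper exploits this as follows. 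First take any infinite sequence $(q_n)$ with $\widehat{f}(q_n)\neq 0$ (available since $f$ is not a trigonometric polynomial) and apply~\eqref{eq:rezydualny} directly: the set of $\alpha$ for which infinitely many of these $q_n$ are denominators of convergents with $|\alpha-p_n/q_n|\,q_n/|\widehat{f}(q_n)|\to 0$ is residual, and this set does not involve $(r,s)$ at all. Now fix such an $\alpha$ and an arbitrary coprime pair $r>s$; apply Lemma~\ref{lm:4a} via Remark~\ref{uw:29} (with $l=2$, $b_1=1$, $b_2=r/s$) to the subsequence of convergents just obtained --- it satisfies $x_{q_n}=|\widehat{f}(q_n)|>0$, which is exactly the hypothesis of the lemma --- and pass to a further subsequence along which \eqref{eq:28a} and \eqref{eq:28b} hold. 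Condition~\eqref{eq:29a} survives passage to subsequences, so Theorem~\ref{tw:25} applies. In short, the order of operations is the reverse of yours: first pin down $\alpha$ using only~\eqref{eq:29a}, then refine for each $(r,s)$ separately. This dissolves your anticipated obstacle entirely, and with it the enumeration of pairs, the nested sets $\mathcal{U}_{j,k,\epsilon}$, and the delicacy about forcing prescribed integers to occur as convergent denominators.
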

\begin{proof}
Let $(q_n)_{n\in\N}$ be such that $\widehat{f}(q_n)\neq 0$. Then by~\eqref{eq:rezydualny}, for a residual set of irrationals $\alpha$, we have
$$
\frac{\left|\alpha-\frac{p_n}{q_n}\right|q_n}{|\widehat{f}(q_n)|}\to 0
$$
along some subsequence of $(q_n)_{n\in\N}$ which, for convenience, we will still denote by $(q_n)_{n\in\N}$. In other words,~\eqref{eq:29a} holds. It follows from Remark~\ref{uw:27}, Lemma~\ref{lm:4a} and Remark~\ref{uw:29} (for $l=2$, $b_1=1$, $b_2=\frac{r}{s}$) that~\eqref{eq:28a} and~\eqref{eq:28b} also hold. Therefore, we can apply Theorem~\ref{tw:25} to complete the proof.
\end{proof}

\begin{uw}\label{uw:stop}
Let $c\neq 0$, $r,s,A,B\in\Z$ be such that $Ar^2+Bs^2\neq 0$. Let $\varphi(x)=cx+f(x)$, where $f\colon \R\to \R$ is of class $C^{1+\delta}$ for some $\delta>0$ and periodic of period $1$. Since the topological degree of $e^{2\pi i (A\varphi^{(r)}(r\cdot)+B\varphi^{(s)}(s\cdot + h))}$ is equal to $(Ar^2+Bs^2)c$, an immediate consequence of~\eqref{eq:fure} is that the cocycle
$e^{2\pi i (A\varphi^{(r)}(r\cdot)+B\varphi^{(s)}(s\cdot + h))}$ is not a coboundary for all $h\in\R$. On the other hand, if $Ar^2+Bs^2=0$, then for some $\lambda$ of modulus~1, we have
$$
e^{2\pi i (A\varphi^{(r)}(r\cdot)+B\varphi^{(s)}(s\cdot + h))}=\lambda\cdot
e^{2\pi i (Af^{(r)}(r\cdot)+Bf^{(s)}(s\cdot + h))}.$$
\end{uw}

\begin{wn}\label{wn:21}
Let $c\in\Z$ and suppose that $f\in C^{1+\delta}(\T)$ for some $\delta>0$ and it is not a trigonometric polynomial. Let $\varphi(x)=cx+f(x)$. Then for a generic $\alpha$ the cocycle $(e^{2\pi i \varphi^{(r)}(r\cdot)}, e^{2\pi i \varphi^{(s)}(s\cdot +h)})$ is ergodic (as a cocycle over $Tx=x+\alpha$) for all relatively prime numbers $r\neq s$ and any $h\in\R$.
\end{wn}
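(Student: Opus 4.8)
The plan is to reduce ergodicity of the $\T^2$-valued cocycle $\psi_{c_1}(x)=(e^{2\pi i \varphi^{(r)}(rx)},e^{2\pi i\varphi^{(s)}(sx+h)})$ over the rotation $Tx=x+\alpha$ to the non-coboundary statements already available, via the standard characterization of ergodicity of compact abelian group extensions by characters (the version recalled in the Appendix, cf. Remark~\ref{uw:7}). Concretely, the skew product $T_{\psi_{c_1}}$ on $\T\times\T^2$ is ergodic if and only if, for every nontrivial character $(A,B)\in\widehat{\T^2}=\Z^2$, $A^2+B^2\neq 0$, the function
$$
x\mapsto e^{2\pi i(A\varphi^{(r)}(rx)+B\varphi^{(s)}(sx+h))}
$$
is not a multiplicative $T$-coboundary, i.e. there is no measurable $\xi\colon\T\to\mathbb S^1$ and no $\lambda\in\mathbb S^1$ with this function equal to $\lambda\,\xi(x+\alpha)/\xi(x)$. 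So it suffices to establish, for a generic $\alpha$, that for all relatively prime $r\neq s$, all $h\in\R$, all $\lambda\in\mathbb S^1$ and all $(A,B)\in\Z^2\setminus\{0\}$, the cocycle $\lambda e^{2\pi i(A\varphi^{(r)}(rx)+B\varphi^{(s)}(sx+h))}$ is not a $T$-coboundary.

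The key step is then to split according to whether $Ar^2+Bs^2$ vanishes, which is exactly the dichotomy isolated in Remark~\ref{uw:stop}. If $Ar^2+Bs^2\neq 0$, the topological degree of the cocycle is the nonzero integer $(Ar^2+Bs^2)c$ (using $c\neq 0$; the case $c=0$ is handled by Corollary~\ref{wn:21-} directly since then $\varphi=f$), and a cocycle of nonzero degree over a rotation is never a coboundary --- this is the elementary fact referenced via \eqref{eq:fure} in the Appendix. If instead $Ar^2+Bs^2=0$, then the linear parts cancel and, again by Remark~\ref{uw:stop}, the cocycle differs only by a unimodular constant from $e^{2\pi i(Af^{(r)}(rx)+Bf^{(s)}(sx+h))}$; since $f$ is assumed not to be a trigonometric polynomial, Corollary~\ref{wn:21-} applies and gives, for a generic $\alpha$, that this is not a $T$-coboundary for any $\lambda$, any relatively prime $r,s$, and any $h$. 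Intersecting the two generic (residual) sets of $\alpha$ --- noting that in the first branch no condition on $\alpha$ is needed at all, and in the second branch the residual set from Corollary~\ref{wn:21-} works simultaneously for all $(A,B)$, $r$, $s$, $h$, $\lambda$ --- yields a single residual set of $\alpha$ for which every nontrivial character obstruction is ruled out, hence $\psi_{c_1}$ is ergodic.

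The only genuine subtlety, and the step I would be most careful about, is the bookkeeping of the genericity: one must make sure that the residual set produced in Corollary~\ref{wn:21-} does not depend on the data $(A,B,r,s,h,\lambda)$ --- and indeed it does not, because that corollary is quantified over all these parameters by a single $(q_n)$ chosen only from $\{n: \widehat f(n)\neq 0\}$ and a single diagonalization (Remark~\ref{uw:29}); the conditions \eqref{eq:28a}, \eqref{eq:28b}, \eqref{eq:29a} needed to invoke Theorem~\ref{tw:25} are arranged uniformly. Everything else is an application of results already proved: the character criterion for ergodicity of compact group extensions, the degree obstruction from the Appendix, and Remark~\ref{uw:stop}. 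No new computation is required beyond assembling these pieces and passing from ``not a coboundary for all nontrivial characters'' to ``ergodic''.
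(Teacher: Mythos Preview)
Your proposal is correct and follows exactly the paper's route: invoke the character criterion for ergodicity (Remark~\ref{uw:7}), split via the degree dichotomy of Remark~\ref{uw:stop}, and feed the degree-zero case into Corollary~\ref{wn:21-}. One minor terminological slip: where you write ``not a multiplicative $T$-coboundary, i.e.\ there is no $\xi$ and no $\lambda$ with\ldots'', the clause with $\lambda$ describes being a \emph{quasi}-coboundary rather than a coboundary; but since you then correctly reduce to showing that $\lambda e^{2\pi i(\cdot)}$ is not a coboundary for every $\lambda$ (which is what Corollary~\ref{wn:21-} and Remark~\ref{uw:stop} actually deliver), the argument is unaffected.
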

\begin{proof}
In view of Remark~\ref{uw:7}, the assertion follows immediately from Corollary~\ref{wn:21-} and Remark~\ref{uw:stop}.
\end{proof}

\begin{proof}[Proof of Theorem~\ref{thm:main}]
Recall that $T_{c,f}(x,y)=(x+\alpha,y+cx+f(x))$. We can assume that $f$ is not a trigonometric polynomial. Indeed, otherwise $f$ is a coboundary with the transfer function also being a trigonometric polynomial and the problem is reduced to the affine case.

It follows from Lemma~\ref{lm:7} and Corollary~\ref{wn:21} that for a generic $\alpha$ the decomposition of $\T^4$ into minimal components of $({T_{c,f}})^r\times ({T_{c,f}})^s$ is the same as the decomposition into ergodic components: it consists of sets $I_{c_1}$, $c_1\in [0,\frac{1}{r})$ (see~\eqref{eq:i} on page~\pageref{eq:i}). Moreover, each ergodic component is uniquely ergodic. It follows that all points are generic for $({T_{c,f}})^r\times ({T_{c,f}})^s$ (for the relevant invariant measures). Thus, conditions~\eqref{numer:a} and~\eqref{numer:b} are satisfied.

To complete the proof, it suffices to show that the set $\widehat{\T}^2$ satisfies condition~\eqref{numer:c}. This is however true by Lemma~\ref{lm:12} and the result follows.
\end{proof}

\section{Appendix}

\subsection{Notation and basic facts}

\paragraph{Automorphisms of standard Borel spaces}
Let $(X,\mathcal{B},\mu)$ be a  probability standard Borel space. By ${\rm Aut}(X,\mathcal{B},\mu)$ we will denote the space of all bi-measurable measure-preserving bijections of $X$ which we will call automorphisms.

\paragraph{Irrational rotation}\label{se:rot}
We will identify the multiplicative circle $\mathbb{S}^1=\{z\in\C\colon |z|=1\}$ and $\T=\R/\Z$ with $X=[0,1)$ with addition mod $1$. Therefore, real functions defined on the circle will be identified with one-periodic functions defined on $\R$. Let $\lambda_\T$ denote Lebesgue measure on $X$.

Assume that $T\colon X\to X$ is an irrational rotation, $Tx=x+\alpha\ (\text{mod } 1)$, $x\in X$. Clearly $T\in {\rm Aut}(X,\mathcal{B}(X),\lambda_\mathbb{T})$. Let $\alpha=[0;a_1,a_2,\dots]$ be the continued fraction expansion of $\alpha$. Let
\begin{align*}
&q_0=1, q_1=a_1, q_{n+1}=a_{n+1}q_n+q_{n-1},\\
&p_0=0, p_1=1, p_{n+1}=a_{n+1}p_n+p_{n-1}
\end{align*}
for $n\geq 1$.
The rationals $p_n/q_n$ are called the \emph{convergents} of $\alpha$.

Recall (see e.g.~\cite{MR1451873}) that every convergent $p_n/q_n$ is a best approximation of $\alpha$ in the following sense:
\begin{equation}\label{eq:best}
\text{ if }\frac{c}{d}\neq\frac{p_n}{q_n} \text{ and }0<d\leq q_n \text{ then }|c-d\alpha|>|p_n-q_n\alpha|.
\end{equation}

Recall also (see e.g.\ \cite{MR1286834}) that given an infinite set $\{q_n\}_{n\in\N}\subset \N$ and a positive real valued function $R=R(q_n)$ the set
\begin{multline}\label{eq:rezydualny}
\mathcal{A}=\left\{\alpha\in [0,1) \colon \text{ for infinitely many }n\text{ we have }\left|\alpha-\frac{p_n}{q_n} \right|<R(q_n),\right.\\
\left.\text{where }\frac{p_n}{q_n}\text{ are convergents of }\alpha\right\}\text{ is residual in }\T.
\end{multline}

\paragraph{Cocycles and group extensions}
Let $T\in {\rm Aut}(X,\mathcal{B},\mu)$. For a locally compact second countable Abelian group $G$\footnote{We use multiplicative notation in $G$.} with a Haar measure $\lambda_G$, a measurable map $\varphi\colon \Z\times X\to G$ is called a \emph{cocycle} if
$$
\varphi^{(n+m)}(x)={\varphi}^{(n)}(x){\varphi}^{(m)}(T^nx)\text{ for each }n,m\in\Z.
$$
The generator $\varphi(x)={\varphi}^{(1)}(x)$ determines ${\varphi}^{(n)}(x)$ for any $(n,x)\in \Z\times X$:
$$
\varphi^{(n)}(x)=\begin{cases}
\varphi(x)\cdot\varphi(Tx)\cdot\ldots \cdot \varphi(T^{n-1}x),& \text{ if }n>0,\\
1,& \text{ if }n=0,\\
(\varphi(T^nx)\cdot\ldots\cdot\varphi(T^{-1}x))^{-1},&\text{ if }n<0.
\end{cases}
$$
Thus, we will call a \emph{cocycle} any measurable function $\varphi\colon X\to G$ as well. Given $\varphi$, we consider a $G$-extension of $T$, acting on $(X\times G,\mu\otimes \lambda_G)$, defined by the formula
$$
T_\varphi(x,g)=(Tx,\varphi(x)g).
$$
A cocycle $\varphi$ is called a $T$-\emph{coboundary} (or simply a \emph{coboundary}) if it is of the form
$$
\varphi(x)=\xi(x)(\xi(Tx))^{-1}
$$
for some measurable function $\xi\colon X\to G$ (called a \emph{transfer function}). Two cocycles $\phi,\psi\colon X\to G$ are \emph{cohomologous} if for some measurable function $f\colon X\to G$ we have
\begin{equation}\label{eq:gwizd}
(f(Tx))^{-1}\phi(x)f(x)=\psi(x).
\end{equation}

Analogous notions to the above ones are also present in topological dynamics. Let $T\colon X\to X$ be a minimal homeomorphism of a compact metric space. Let $\varphi\colon X\to G$ be a continuous function. We say that $\varphi$ is a \emph{topological $T$-coboundary} if it is a measurable $T$-coboundary with a continuous transfer function.

\subsection{Compact group extensions: ergodicity and minimality}

Let $G$ be a compact Abelian metrizable group and assume that $T\in {\rm Aut}(X,\mathcal{B},\mu)$ is ergodic. Let $\varphi:X\to G$ be a cocycle.
Then
\begin{equation}\label{cerg}
\begin{array}{l}
\mbox{$T_\varphi$ is ergodic if and only if the equation}\\
\mbox{$\chi\circ\varphi=\xi/\xi\circ T$ has only trivial solution $\chi=\raz, \xi={\rm  const}$}\\
\mbox{in $\chi\in\widehat{G}$ and a measurable $\xi:X\to\mathbb{S}^1$.}\end{array}\end{equation}
For example, see \cite{MR0133429},
\begin{equation}\label{eq:fure}\begin{array}{l}
\mbox{if $Tx=x+\alpha$, $\varphi:\T\to\mathbb{S}^1$ is Lipschitz with non-zero degree}\\
\mbox{then $T_\varphi$ is ergodic.}\end{array}\end{equation}
If $T_\varphi$ is ergodic, we will also say that $\varphi$ is ergodic (for $T$).

Recall that $G$ acts on $X\times G$ via $g\mapsto R_g$, where:
$$
R_g(x,h):=(x,hg^{-1}) \text{ for }(x,h)\in X\times G,\ g\in G.
$$

\paragraph{Ergodic components}
Let $\varphi\colon X\to G$ be a cocycle. Let $\mathcal{P}(T_\varphi,\mathcal{B},\mu)$ stand for the set of $T_\varphi$-invariant Borel measures whose projection on $X$ is $\mu$. Fix an ergodic measure $\lambda\in \mathcal{P}(T_\varphi,\mathcal{B},\mu)$. Denote by $H$ be the stabilizer of $\lambda$ in $G$, i.e.
$$
H:=\{g\in G\colon \lambda\circ R_g=\lambda\}.
$$
Notice that
$$
\Gamma:=\{\chi\in\widehat{G}\colon \chi\circ \varphi\text{ is a coboundary}\}
$$
is a (closed) subgroup of $\widehat{G}$ and let
$$
F=\text{ann }\Gamma:=\{g\in G\colon \text{ for each }\chi\in\Gamma,\ \chi(g)=1 \}.
$$

\begin{pr}[see e.g.~\cite{MR1091425}]\label{pr:5}
The system $(X\times G,T_\varphi,\lambda)$ is isomorphic to $(X\times H,T_\psi,\mu\otimes \lambda_H)$ for some ergodic $\psi\colon X\to H$. Moreover, $F=H$.
\end{pr}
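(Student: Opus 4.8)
The plan is to study the ergodic component $\lambda$ through its disintegration over $(X,\mathcal{B},\mu)$ and the action it induces on Fourier coefficients in the fibre direction. Write $\lambda=\int_X \delta_x\otimes\lambda_x\, d\mu(x)$, with $x\mapsto \lambda_x\in\mathcal{P}(G)$ measurable, and put $\widehat{\lambda_x}(\chi):=\int_G\chi\,d\lambda_x$ for $\chi\in\widehat{G}$. Invariance of $\lambda$ under $T_\varphi(x,g)=(Tx,\varphi(x)g)$ is equivalent to the requirement that, for $\mu$-a.e.\ $x$, $\lambda_{Tx}$ be the translate of $\lambda_x$ by $\varphi(x)$, and hence to $\widehat{\lambda_{Tx}}(\chi)=\chi(\varphi(x))\,\widehat{\lambda_x}(\chi)$ for all $\chi$. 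In particular $x\mapsto|\widehat{\lambda_x}(\chi)|$ is $T$-invariant, so by ergodicity of $T$ it equals a constant $c(\chi)\in[0,1]$; since $\widehat{G}$ is countable there is a single conull set of $x$ on which $|\widehat{\lambda_x}(\chi)|=c(\chi)$ for every $\chi$.

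The main point is to identify $c$, and this is where ergodicity of $\lambda$ itself is used. If $c(\chi)>0$, then $\widehat{\lambda_x}(\chi)\ne 0$ a.e.\ and $b(x):=\widehat{\lambda_x}(\chi)/|\widehat{\lambda_x}(\chi)|\in\mathbb{S}^1$ satisfies $\chi\circ\varphi=(b\circ T)/b$, so $\chi\circ\varphi$ is a coboundary and $\chi\in\Gamma$. Conversely, if $\chi\in\Gamma$, say $\chi\circ\varphi=\xi/\xi\circ T$ with $\xi\colon X\to\mathbb{S}^1$ measurable, then $F(x,g):=\xi(x)\chi(g)$ is $T_\varphi$-invariant and of modulus one, hence $\lambda$-a.e.\ constant by ergodicity of $\lambda$; reading this along the fibres forces $\chi$ to be $\lambda_x$-a.e.\ equal to a constant times $\overline{\xi(x)}$, so $|\widehat{\lambda_x}(\chi)|=1$, i.e.\ $c(\chi)=1$. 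Thus $c=\raz_\Gamma$. Two consequences follow. First, for a.e.\ $x$ the set $\{\chi:|\widehat{\lambda_x}(\chi)|=1\}$ equals $\Gamma$, so $\lambda_x$ lives on a single coset of $\bigcap_{\chi\in\Gamma}\ker\chi=\text{ann }\Gamma=F$. Second, since translating a measure on $G$ by $g_0$ multiplies its $\chi$-coefficient by $\chi(g_0)$, comparing the fibres of $\lambda$ and of $\lambda\circ R_{g_0}$ (which differ by the factor $\chi(g_0)$ on the $\chi$-th coefficient) shows that $\lambda\circ R_{g_0}=\lambda$ holds precisely when $\chi(g_0)=1$ for all $\chi\in\Gamma$, i.e.\ precisely when $g_0\in\text{ann }\Gamma$. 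Hence $H=\text{ann }\Gamma=F$; in particular $H$ is a closed subgroup of $G$, and the equality $F=H$ is proved.

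To finish I would upgrade the support statement to ``$\lambda_x$ is a translate of $\lambda_H$''. For each fixed $h\in H$ the identity $\lambda\circ R_h=\lambda$ gives that $\lambda_x$ is invariant under translation by $h$ for $\mu$-a.e.\ $x$, and a Fubini argument over $\mu\otimes\lambda_H$ then shows that for $\mu$-a.e.\ $x$ the closed subgroup of those $h\in H$ fixing $\lambda_x$ has full Haar measure in $H$, hence is all of $H$; combined with the previous paragraph, $\lambda_x$ is the translate of $\lambda_H$ by some $\tilde\sigma(x)$, where $\tilde\sigma(x)H\subset G$ is the coset carrying $\lambda_x$. The assignment $x\mapsto \tilde\sigma(x)H\in G/H$ is measurable (it is recovered from $x\mapsto\lambda_x$), so it lifts to a measurable $\tilde\sigma\colon X\to G$ via a Borel cross-section of $G\to G/H$. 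Set $\psi(x):=\tilde\sigma(Tx)^{-1}\varphi(x)\tilde\sigma(x)$; projecting the relation $\varphi(x)\cdot\tilde\sigma(x)H=\tilde\sigma(Tx)H$ (which is invariance of $\lambda$ read modulo $H$) to $G/H$ gives $\psi(x)\in H$, so $\psi\colon X\to H$ is a cocycle cohomologous to $\varphi$ through $\tilde\sigma$. Finally the fibre-wise translation $R(x,g):=(x,\tilde\sigma(x)^{-1}g)$ on $X\times G$ conjugates $T_\varphi$ to the map $(x,g)\mapsto(Tx,\psi(x)g)$, commutes with the $G$-action, and carries $\lambda$ to the measure all of whose fibres equal $\lambda_H$, that is, to $\mu\otimes\lambda_H$ on $X\times H$; restricting to $X\times H$ gives the isomorphism $(X\times G,T_\varphi,\lambda)\cong(X\times H,T_\psi,\mu\otimes\lambda_H)$, and $\psi$ is ergodic because $\lambda$ is. I expect the identification $c=\raz_\Gamma$ — equivalently, the rigidity of the fibre measures forced by ergodicity of $\lambda$ — to be the crux; the rest is routine measurable-selection and Fubini bookkeeping.
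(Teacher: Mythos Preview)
The paper does not actually prove this proposition: it is quoted from the literature (the reference~\cite{MR1091425}) and stated without proof, so there is no ``paper's own proof'' to compare against. Your argument is correct and is essentially the standard one. The key computation --- that the modulus $c(\chi)=|\widehat{\lambda_x}(\chi)|$ is $T$-invariant, that $c(\chi)>0$ forces $\chi\in\Gamma$, and that ergodicity of $\lambda$ forces $c(\chi)=1$ whenever $\chi\in\Gamma$ --- is exactly the mechanism behind the result, and your deduction $H=\text{ann}\,\Gamma=F$ from it is clean.

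One small simplification: the Fubini step you use to upgrade ``$\lambda_x$ is supported on a coset of $H$'' to ``$\lambda_x$ is a translate of $\lambda_H$'' is not needed. You have already shown $\widehat{\lambda_x}(\chi)=0$ for every $\chi\notin\Gamma$; since $\widehat{H}\cong\widehat{G}/\Gamma$ via restriction (Pontryagin duality, using $H=\text{ann}\,\Gamma$), this says precisely that the translate of $\lambda_x$ back to $H$ has all nontrivial Fourier coefficients equal to zero, hence equals $\lambda_H$. The rest of your construction (measurable section of $G\to G/H$, the cohomology $\psi=\tilde\sigma(T\cdot)^{-1}\varphi\,\tilde\sigma$, and the conjugating map $R$) is routine and correctly carried out.
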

\begin{uw}\label{uw:7}
In view of~\eqref{cerg}, $T_\varphi$ is ergodic if and only if $\Gamma=\{\raz\}$.
\end{uw}

\paragraph{Minimal components}
Let $T\colon X\to X$ be a minimal homeomorphism of a compact metric space. Let $\varphi\colon X\to G$ be a continuous function and let $M$ be a minimal component of $T_\varphi$, i.e. $M\subset X\times G$ is closed and invariant with no proper subsets having the same properties. Let $H_{top}$ be the stabilizer of $M$ in $G$, i.e.
$$
H_{top}:=\{g\in G\colon  R_g(M)=M\}
$$
(this definition is independent of the initial choice of $M$). Notice that
$$
\Gamma_{top}:=\{\chi\in\widehat{G}\colon \chi\circ \varphi\text{ is a topological coboundary}\}
$$
is a closed subgroup of $\widehat{G}$ and let
$$
F_{top}:=\text{ann }\Gamma_{top}=\{g\in G\colon \text{ for each }\chi\in\Gamma_{top},\ \chi(g)=1 \}.
$$

\begin{lm}[\cite{phd-siemaszko}]\label{siem}
There exists a continuous map $\tau\colon X\to G/H_{top}$ such that
\begin{equation}\label{eq:siemaszko}
\tau(Tx)=\varphi(x)\tau(x).
\end{equation}
Moreover, $M:=\cup_{x\in X}\{x\}\times \tau(x)$ is a minimal set.
\end{lm}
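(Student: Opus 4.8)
The plan is to build the continuous section $\tau\colon X\to G/H_{top}$ directly from a fixed minimal component $M$, and then verify that the set it defines coincides with $M$. First I would recall the standing hypotheses: $T\colon X\to X$ is a minimal homeomorphism of a compact metric space, $G$ is a compact Abelian metrizable group, $\varphi\colon X\to G$ is continuous, and $M\subset X\times G$ is a minimal set for $T_\varphi$. The key structural fact I want to exploit is that $T_\varphi$ is a \emph{distal} extension of $T$ (being a compact group extension), and hence $X\times G$ decomposes into minimal components which are pairwise disjoint closed sets, each of which projects onto $X$ and meets every fiber $\{x\}\times G$ in a single coset of $H_{top}$. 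So the first step is: show that for each $x\in X$, the set $M\cap(\{x\}\times G)$ is a nonempty coset of $H_{top}$ in $G$. Nonemptiness follows because $\pi\colon M\to X$ (projection onto the first coordinate) is a continuous map with closed, $T$-invariant image in a minimal system, hence surjective. That the fiber is a single coset of $H_{top}$ uses the definition of $H_{top}$ as the stabilizer $\{g\in G: R_g(M)=M\}$ together with the fact that $G$ acts transitively on each fiber of $X\times G$ and commutes with $T_\varphi$: if $(x,g),(x,g')\in M$ then $R_{g'g^{-1}}$ carries one to the other and, by minimality of $M$ and commutation of $R_h$ with $T_\varphi$, actually fixes $M$ setwise, so $g'g^{-1}\in H_{top}$; conversely $R_h M=M$ for $h\in H_{top}$ gives the whole coset.

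Given this, define $\tau(x)$ to be the unique element of $G/H_{top}$ such that $M\cap(\{x\}\times G)=\{x\}\times \tau(x)$ (abusing notation by identifying a coset with a point of $G/H_{top}$). The cocycle identity~\eqref{eq:siemaszko} is then immediate from the $T_\varphi$-invariance of $M$: since $T_\varphi(x,g)=(Tx,\varphi(x)g)$ maps the fiber over $x$ onto the fiber over $Tx$ and maps $M$ to $M$, it maps the coset $\tau(x)$ to the coset $\varphi(x)\tau(x)$, which must equal $\tau(Tx)$. The only real work is \textbf{continuity of $\tau$}, and I expect this to be the main obstacle. The approach here is: the map $X\times G\to G/H_{top}$ sending $(x,g)\mapsto gH_{top}$ is continuous, and $M$ is compact; I claim the graph $\{(x,\tau(x)):x\in X\}\subset X\times G/H_{top}$ is precisely the image of $M$ under the continuous map $(x,g)\mapsto(x,gH_{top})$, hence compact, hence closed. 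A closed graph over a compact space, for a function into a compact (Hausdorff) space, forces continuity of the function — this is the standard closed-graph argument, and it is where compactness of $G$ (so that $G/H_{top}$ is compact Hausdorff) is essential. So continuity drops out of compactness rather than from any delicate estimate.

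Finally, for the ``moreover'' clause I would simply observe that $M=\bigcup_{x\in X}\{x\}\times\tau(x)$ by construction of $\tau$ (this is exactly how $\tau$ was defined, once we interpret $\tau(x)$ as the coset, i.e. as a subset of $G$), and $M$ was assumed to be a minimal set, so there is nothing further to prove. If one instead wishes to start from $\tau$ abstractly and recover minimality, the argument is: any closed $T_\varphi$-invariant subset $N\subset M$ projects onto $X$ (same surjectivity argument), meets each fiber in a nonempty subset of the coset $\tau(x)$ that is invariant under the action of the subgroup $H_{top}\cap(\text{stabilizer of }N)$, and minimality of the base together with the group action forces $N=M$. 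The one subtlety worth flagging is making sure $\tau$ is well-defined as a \emph{single-valued} map, which is exactly the ``single coset per fiber'' claim from the first step — once that is in hand, everything else is formal. I do not anticipate needing anything beyond compactness of $G$, minimality of $T$, and the distality/commuting-action structure already implicit in the setup.
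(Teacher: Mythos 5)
Your argument is correct and complete. Note that the paper itself gives no proof of this lemma---it is quoted from the thesis \cite{phd-siemaszko}---so there is nothing to compare against, but your construction is the standard one and every step checks out: existence of some minimal set $M$ (Zorn, on the compact space $X\times G$); surjectivity of the projection $M\to X$ by minimality of $T$; the fiber of $M$ over each $x$ being a single $H_{top}$-coset, because $R_h$ commutes with $T_\varphi$, so $R_h(M)$ is again minimal and two intersecting minimal sets coincide; the identity $\tau(Tx)=\varphi(x)\tau(x)$ from $T_\varphi$-invariance of $M$; and continuity of $\tau$ via the closed-graph argument, which is legitimate because $H_{top}$ is closed (so $G/H_{top}$ is compact Hausdorff) and the graph of $\tau$ is the image of the compact set $M$ under the continuous map $(x,g)\mapsto(x,gH_{top})$. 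Two cosmetic points: with the paper's convention $R_g(x,h)=(x,hg^{-1})$ the element carrying $(x,g)$ to $(x,g')$ is $R_{g'^{-1}g}$ rather than $R_{g'g^{-1}}$ (harmless, as $G$ is Abelian and $H_{top}$ is a subgroup), and the invocation of distality is decorative---your actual argument uses only the commuting $G$-action and minimality, which is all that is needed.
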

The proof of the following result is analogous to the one in the measure-theoretical case. We include it here for the sake of completeness.
\begin{pr}\label{tw:top}
$H_{top}=F_{top}$.
\end{pr}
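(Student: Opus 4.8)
The plan is to show the two inclusions $F_{top}\subset H_{top}$ and $H_{top}\subset F_{top}$ separately, mimicking the measure-theoretic argument behind Proposition~\ref{pr:5}. Throughout, I would fix a minimal component $M\subset X\times G$ and use Lemma~\ref{siem}, which provides a continuous section $\tau\colon X\to G/H_{top}$ with $\tau(Tx)=\varphi(x)\tau(x)$ and $M=\bigcup_{x\in X}\{x\}\times\tau(x)$. The key observation is that passing to the quotient group $G/H_{top}$ turns the cocycle equation into a genuine coboundary relation at the level of $G/H_{top}$: if $\pi\colon G\to G/H_{top}$ is the quotient map and we pick any continuous (or just measurable) lift, then $\pi\circ\varphi$ is a topological coboundary in $G/H_{top}$ with transfer function coming from $\tau$.

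First I would prove $H_{top}\subset F_{top}$, i.e.\ that every $\chi\in\Gamma_{top}$ is trivial on $H_{top}$. Take $\chi\in\Gamma_{top}$, so $\chi\circ\varphi=\xi/\xi\circ T$ for some continuous $\xi\colon X\to\mathbb{S}^1$. Then the set $N:=\{(x,g)\in X\times G:\chi(g)=\xi(x)\}$ is closed and $T_\varphi$-invariant: indeed $\chi(\varphi(x)g)=\chi(\varphi(x))\chi(g)=(\xi(Tx)/\xi(x))\xi(x)=\xi(Tx)$. Since $M$ is minimal and $M\cap N\neq\emptyset$ (any fibre of $N$ over $x$ is a coset of $\ker\chi$, which meets the fibre $\tau(x)$ of $M$ because $G$ acts transitively on $G/\ker\chi$ — here one uses that $\tau(x)$ is a coset of $H_{top}$ and $H_{top}$ together with $\ker\chi$ generate... more carefully: one shows $M\cap N$ is closed, nonempty and invariant, hence equals $M$). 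Once $M\subset N$, for $g\in H_{top}$ we have $R_g(M)=M\subset N$, and comparing $\chi$ on a fibre of $M$ before and after translating by $g$ forces $\chi(g)=1$. Hence $g\in F_{top}$.

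Next, for $F_{top}\subset H_{top}$: suppose $g\in G$ satisfies $\chi(g)=1$ for every $\chi\in\Gamma_{top}$; I want $R_g(M)=M$. The set $R_g(M)$ is again a minimal component, so by the structure of minimal components (all minimal components of a compact group extension of a minimal system are translates of one another by elements of $G$, and two of them coincide or are disjoint) it suffices to rule out $R_g(M)\cap M=\emptyset$ unless they are equal. Here I would use that $M=\bigcup_x\{x\}\times\tau(x)$ with $\tau(x)$ a coset of $H_{top}$, so $R_g(M)=M$ iff $gH_{top}=H_{top}$, i.e.\ I really must show $g\in H_{top}$ directly. The argument: the quotient cocycle $\overline\varphi=\pi\circ\varphi\colon X\to G/H_{top}$ is, via $\tau$, a topological coboundary, so for every character $\overline\chi$ of $G/H_{top}$, the lift $\chi:=\overline\chi\circ\pi\in\widehat G$ lies in $\Gamma_{top}$; by hypothesis $\chi(g)=1$ for all such $\chi$, which means $\overline\chi(\pi(g))=1$ for every $\overline\chi\in\widehat{G/H_{top}}$, hence $\pi(g)$ is trivial in $G/H_{top}$, i.e.\ $g\in H_{top}$. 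This shows $F_{top}\subset H_{top}$ and completes the proof.

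The main obstacle, and the step I would be most careful about, is the claim in the first inclusion that $M\cap N$ is nonempty — equivalently, that the section $\tau$ takes values compatible with the constraint $\chi(g)=\xi(x)$. This is where one must genuinely use the relation $\chi\circ\varphi=\xi/\xi\circ T$ together with $\tau(Tx)=\varphi(x)\tau(x)$ to see that $x\mapsto \chi(\tau(x))\xi(x)^{-1}$ (well-defined on $G/H_{top}$ precisely because $\chi$ kills $H_{top}$ once we know $\chi\in\Gamma_{top}$ and... this is slightly circular) is $T$-invariant, hence constant by minimality of $T$ on $X$, and absorbing the constant into $\xi$ gives $M\subset N$. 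Getting the quantifiers straight there — in particular that $\chi\in\Gamma_{top}$ forces $\chi$ constant on the fibres of $M$, which is what lets the expression descend to $G/H_{top}$ — is the delicate point; everything else is bookkeeping with characters and annihilators.
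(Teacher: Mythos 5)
Your proof follows essentially the same route as the paper's: the inclusion $F_{top}\subset H_{top}$ is verbatim the paper's argument (every character of $G/H_{top}$ lifts via $\tau$ to an element of $\Gamma_{top}$, and characters separate points of $G/H_{top}$), and the inclusion $H_{top}\subset F_{top}$ is the paper's argument in a slightly disguised form. The circularity you flag in that first inclusion disappears if, instead of trying to push $\chi$ down to $G/H_{top}$, you observe that $w(x,g):=\xi(x)^{-1}\chi(g)$ is continuous and $T_\varphi$-invariant on all of $X\times G$ and hence constant on the minimal component $M$ (this is exactly the paper's device); that single observation gives both $M\subset N$ for a suitably normalized $\xi$ and, after translating $M$ by $g\in H_{top}$, the desired conclusion $\chi(g)=1$.
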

\begin{proof}
Let $g_0\in H_{top}$ and let $\chi\in \Gamma_{top}$, i.e.
$$
\chi\circ \varphi(x)=h(Tx)/h(x)
$$
for some continuous function $h\colon X\to \mathbb{S}^1$. We define $w\colon X\times G\to \mathbb{S}^1$:
$$
w(x,g)=h(x)^{-1}\chi(g).
$$
This function is continuous and $T_\varphi$-invariant, whence it is constant on each minimal component. It follows that
$h(x)^{-1}\chi(g)\chi(g_0)=w(x,gg_0)=w(x,g)=h(x)^{-1}\chi(g)$,
which implies $\chi(g_0)=1$. Therefore $g_0\in F_{top}$.

Suppose now that there exists $g_0\in F_{top}\setminus H_{top}$. Then there exists $\chi\in \widehat{G}$ such that $\chi(g_0)\neq 1$ and $\chi(H_{top})=\{1\}$. Let $\tau\colon X\to G/H_{top}$ satisfy~\eqref{eq:siemaszko}. It follows that $\chi \circ \tau$ is well-defined and we obtain
$$
\chi\circ \tau(Tx)=\chi\circ \varphi(x)\cdot \chi\circ \tau(x)
$$
which implies
$$
\chi\circ \varphi(x)=\frac{\chi\circ \tau(Tx)}{\chi\circ \tau (x)},
$$
i.e. $\chi\in \Gamma_{top}$ and consequently $\chi(g_0)=1$ which is a contradiction.
\end{proof}

\paragraph{Relation between the ergodic and the minimal components}

Let $T$ be a minimal homeomorphism of a compact metric space $X$ and let $\mu$ be a $T$-invariant probability Borel measure, ergodic with respect to $T$. Let $\varphi\colon X\to G$ be continuous. Then $T_\varphi$ is a homeomorphism of $X\times G$ and $T_\varphi\in {\rm Aut}(X\times G,\mathcal{B}\otimes \mathcal{B}(G),\mu\otimes \lambda_G)$. Let $\lambda\in\mathcal{P}(T_{\varphi},\mathcal{B},\mu)$. There are two natural partitions associated to $T_\varphi$:
\begin{itemize}
\item
$\mathcal{P}_{erg}$ -- the partition into the ergodic components of $T_\varphi$,
\item
$\mathcal{P}_{min}$ -- the partition into the minimal components of $T_\varphi$.
\end{itemize}
Partition $\mathcal{P}_{erg}$ is clearly measurable.
\begin{pr}
Partition $\mathcal{P}_{min}$ is measurable.
\end{pr}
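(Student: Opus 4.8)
The plan is to show that the minimal components of $T_\varphi$ form a measurable partition by exhibiting them, essentially, as the fibres of a single measurable (indeed continuous) map, using the structure already developed for the stabilizer subgroup $H_{top}$. First I would invoke Lemma~\ref{siem}: there is a continuous map $\tau\colon X\to G/H_{top}$ with $\tau(Tx)=\varphi(x)\tau(x)$ and $M_0:=\bigcup_{x\in X}\{x\}\times\tau(x)$ is a minimal set. The key observation is that all minimal components are translates of $M_0$ by elements of $G$: namely $R_g(M_0)$ is again closed, $T_\varphi$-invariant and minimal for each $g\in G$, and conversely, by Lemma~\ref{siem} applied suitably (or by a direct argument from minimality of the base $T$ and compactness of $G$), every minimal component arises this way. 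Moreover $R_g(M_0)=R_{g'}(M_0)$ if and only if $g^{-1}g'\in H_{top}$, since $H_{top}$ is by definition the stabilizer of $M_0$ (and this stabilizer is independent of the chosen minimal component, as noted after the definition of $H_{top}$).

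Granting that, the minimal components are precisely the sets $R_g(M_0)$, $g\in G$, and they are parametrized faithfully by $gH_{top}\in G/H_{top}$. Concretely, define $\Phi\colon X\times G\to G/H_{top}$ by $\Phi(x,h)=h\,(\tau(x))^{-1} H_{top}$; one checks $\Phi$ is well-defined and continuous (here I use that $\tau$ takes values in $G/H_{top}$, so $h\,(\tau(x))^{-1}$ is a well-defined coset, and continuity of $\tau$), and that $\Phi$ is $T_\varphi$-invariant: $\Phi(T_\varphi(x,h))=\Phi(Tx,\varphi(x)h)=\varphi(x)h\,(\tau(Tx))^{-1}H_{top}=\varphi(x)h\,(\varphi(x)\tau(x))^{-1}H_{top}=h(\tau(x))^{-1}H_{top}=\Phi(x,h)$. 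The fibres of $\Phi$ are exactly the minimal components: $\Phi^{-1}(gH_{top})$ is a single $G$-translate of $M_0$. Since $G/H_{top}$ is a compact metric space and $\Phi$ is continuous, the partition $\mathcal{P}_{min}$ into the fibres of $\Phi$ is a measurable partition — it is the pullback under a continuous (hence Borel) map of the partition of $G/H_{top}$ into points, which is measurable because $G/H_{top}$ is standard Borel.

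I would carry out the steps in this order: (1) recall Lemma~\ref{siem} and fix $\tau$ and $M_0$; (2) prove that $\{R_g(M_0):g\in G\}$ exhausts all minimal components and that the indexing descends to a bijection with $G/H_{top}$, using the translation-equivariance of $T_\varphi$ under the $G$-action $R_g$ and the fact that $T$ is minimal on the base so that the first coordinate of a minimal component projects onto all of $X$; (3) define $\Phi$ as above, check it is well-defined, continuous and $T_\varphi$-invariant, and that its fibres are the minimal components; (4) conclude measurability of $\mathcal{P}_{min}$ from the measurability of the point partition of the standard Borel space $G/H_{top}$.

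The main obstacle I anticipate is step (2): showing that every minimal component of $T_\varphi$ is a $G$-translate of $M_0$, rather than just that the $R_g(M_0)$ are minimal. The argument should go as follows. Given any minimal set $M\subset X\times G$, its projection to $X$ is closed, $T$-invariant and nonempty, hence equals $X$ by minimality of $T$; pick $(x_0,h_0)\in M$ and set $g_0:=h_0(\tau(x_0))^{-1}$ (a coset representative, chosen measurably — here one must be slightly careful and may wish to argue only up to $H_{top}$, which is exactly what is needed). Then $R_{g_0^{-1}}(M)$ and $M_0$ are two minimal sets both containing a point over $x_0$ with the same $G/H_{top}$-coordinate; using the $T_\varphi$-invariance and the identity $\tau(Tx)=\varphi(x)\tau(x)$ one shows they agree over the full $T$-orbit of $x_0$, which is dense, and then closedness forces $R_{g_0^{-1}}(M)=M_0$, i.e. $M=R_{g_0}(M_0)$. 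The only delicate point is passing from "agree over a dense set" to "agree everywhere," which is immediate from closedness of both sets, and handling the fact that $\tau$ lands in $G/H_{top}$ rather than $G$ — but this is precisely why the target of $\Phi$ is $G/H_{top}$ and causes no real trouble.
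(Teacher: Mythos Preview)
Your proof is correct and is in fact somewhat cleaner than the paper's. Both arguments rest on Lemma~\ref{siem} and the observation that the minimal components are parametrised by $G/H_{top}$, but they package this differently. The paper lifts $\tau$ to a map $\eta=s\circ\tau\colon X\to G$ via a Borel selector $s\colon G/H_{top}\to G$, and then builds an explicit Borel isomorphism $(x,h)\mapsto(x,\eta(x)h)$ between $X\times H_{top}\times G/H_{top}$ (with the obvious product partition) and $X\times G$; measurability of $\mathcal{P}_{min}$ is then inherited from measurability of the product partition. Your route avoids the selector entirely: the map $\Phi(x,h)=h\,\tau(x)^{-1}$ is already well-defined and \emph{continuous} into $G/H_{top}$, so the partition into its fibres is automatically measurable (generated by preimages of a countable base of $G/H_{top}$). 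What the paper's approach buys in exchange for the extra machinery is an explicit Borel conjugacy of $T_\varphi$ on each minimal component with $T_{\varphi'}$ on $X\times H_{top}$, which is a bit more than what is strictly needed here.

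One small comment on your step~(2): the argument you sketch works, but it can be shortened. Since $M_0=\bigcup_{x\in X}\{x\}\times\tau(x)$ and $\tau(x)$ is a coset of $H_{top}$, the sets $R_g(M_0)$ already cover $X\times G$ (for any $(x,h)$ choose $g$ with $g^{-1}h\in\tau(x)$); they are closed, $T_\varphi$-invariant and minimal, hence pairwise disjoint, and therefore any minimal set, being contained in one of them, must equal it. This bypasses the density argument and the coset-representative issue you flagged.
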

\begin{proof}
Let $M\subset X \times G$ be as in Lemma~\ref{siem}, in particular, $M$ is a minimal component of $T_\varphi$. Let $s\colon G/H_{top}\to G$ be a Borel selector of the canonical projection $\pi\colon G\to G/H_{top}$. Consider $\eta\colon X\to G$ given by $\eta=s\circ\tau$. We obtain
$$
\varphi'(x):=\varphi(x)\eta(x)(\eta(Tx))^{-1} \in H_{top}
$$
and
$$
M=\bigcup_{x\in X}\{x\}\times(\eta(x)H_{top}),
$$
so the map
\begin{equation}\label{bbb}
(x,h)\mapsto (x,\eta(x)h)
\end{equation}
settles an equivariant Borel isomorphism of $X\times H$ (considered with $T_{\varphi'}$) and $M$ (considered with $T_\varphi$). Moreover,~\eqref{bbb} can be naturally extended to a Borel isomorphism of $X\times H_{top}\times G/H_{top}$ and $X\times G$. Clearly the partition of $X\times H_{top}\times G/H_{top}$ given by relevant translations of $X\times H_{top}\times \{1\}$ (indexed by $G/H_{top}$) is measurable for the product measure $\mu\otimes \lambda_{H_{top}}\otimes \lambda_{G/H_{top}}$. Hence its image by the Borel extensions of~\eqref{bbb} is also a measurable partition for the image of the measure $\mu\otimes \lambda_{H_{top}}\otimes \lambda_{G/H_{top}}$. This image is equal to $\mu\otimes \lambda_{G}$, so the partition into minimal components is indeed measurable.
\end{proof}
\begin{uw}\label{uuw:1}
Since the partition into the ergodic components can be defined as the finest measurable partition whose atoms are invariant under the action of the homeomorphism in question, $\mathcal{P}_{erg}$ is finer than $\mathcal{P}_{min}$.
\end{uw}
As a direct consequence of the above remark we obtain the following:
\begin{uw}\label{uw:9}
We have $H\supset H_{top}$. The condition $H=H_{top}$ is necessary and sufficient for the ergodic components of $T_\varphi$ to be the same as its minimal components. Moreover, $H=H_{top}$ if and only if $\Gamma=\Gamma_{top}$.
\end{uw}

\paragraph{Unique ergodicity}
\begin{pr}[Furstenberg, see the proof of Theorem I.4 in~\cite{MR0213508} and Proposition 3.10 in~\cite{MR603625}]
Let $T\colon X\to X$ be uniquely ergodic and let $\varphi\colon X\to G$ be a continuous cocycle with values in a compact Abelian group. If $T_\varphi$ is ergodic with respect to $\mu\otimes \lambda_G$ then it is uniquely ergodic.\label{pr:fur}
\end{pr}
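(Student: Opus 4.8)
The plan is to show directly that $T_\varphi$ admits a unique invariant Borel probability measure, necessarily $\mu\otimes\lambda_G$, by exploiting the commuting $G$-action $\{R_g\}_{g\in G}$ together with the ergodicity hypothesis. First I would take an arbitrary $T_\varphi$-invariant Borel probability measure $\nu$ on $X\times G$ and push it forward by the projection $\pi\colon X\times G\to X$. Since $\pi\circ T_\varphi=T\circ\pi$, the measure $\pi_*\nu$ is $T$-invariant, hence $\pi_*\nu=\mu$ because $T$ is uniquely ergodic. Next, a one-line computation shows that every $R_g$ commutes with $T_\varphi$ (both $R_gT_\varphi$ and $T_\varphi R_g$ send $(x,h)$ to $(Tx,\varphi(x)hg^{-1})$), so for each $g\in G$ the measure $R_g\nu$ is again $T_\varphi$-invariant and still projects onto $\mu$.

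Then I would average over the group: set $\bar\nu:=\int_G R_g\nu\,d\lambda_G(g)$, the barycenter, in the weak-$*$ compact convex set of probability measures on $X\times G$, of the image of $\lambda_G$ under $g\mapsto R_g\nu$. As an average of $T_\varphi$-invariant measures, $\bar\nu$ is $T_\varphi$-invariant; and since $R_gR_h=R_{gh}$ and $\lambda_G$ is translation invariant, $R_h\bar\nu=\bar\nu$ for all $h\in G$, i.e.\ $\bar\nu$ is $G$-invariant. Disintegrating $\bar\nu$ over the first coordinate, the conditional measure on the fibre $\{x\}\times G\cong G$ is, for $\mu$-a.e.\ $x$, invariant under a dense set of translations of $G$, hence equals $\lambda_G$; therefore $\bar\nu=\mu\otimes\lambda_G$.

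Finally I would invoke the hypothesis that $\mu\otimes\lambda_G$ is $T_\varphi$-ergodic. An ergodic measure is an extreme point of the simplex of $T_\varphi$-invariant probability measures, and an extreme point represented as the barycenter of a probability distribution on that simplex must agree with that distribution's almost-everywhere value. Applied to the representation $\mu\otimes\lambda_G=\int_G R_g\nu\,d\lambda_G(g)$, this gives $R_g\nu=\mu\otimes\lambda_G$ for $\lambda_G$-a.e.\ $g$; choosing one such $g_0$ and using that $\mu\otimes\lambda_G$ is itself $G$-invariant, we get $\nu=R_{g_0}^{-1}(\mu\otimes\lambda_G)=\mu\otimes\lambda_G$. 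Since $\nu$ was arbitrary, $T_\varphi$ is uniquely ergodic.

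The only step that is not pure bookkeeping is the last one, i.e.\ the extreme-point property of ergodic measures; everything else is a routine disintegration and manipulation of the $G$-action. If one prefers to avoid Choquet theory, that step can be replaced by the elementary observation that for every $T_\varphi$-invariant set $A$ one has $(\mu\otimes\lambda_G)(A)\in\{0,1\}$ and $(\mu\otimes\lambda_G)(A)=\int_G(R_g\nu)(A)\,d\lambda_G(g)$, which forces $(R_g\nu)(A)=(\mu\otimes\lambda_G)(A)$ for $\lambda_G$-a.e.\ $g$; testing this against a countable family of continuous functions that determines measures on $X\times G$ then yields $R_g\nu=\mu\otimes\lambda_G$ for a.e.\ $g$, and one concludes as before.
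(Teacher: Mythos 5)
The paper does not prove this proposition; it only cites Furstenberg's argument (and Proposition 3.10 of the second reference), and your proof is precisely that classical argument: the translations $R_g$ commute with $T_\varphi$, averaging an arbitrary invariant $\nu$ over $G$ yields $\mu\otimes\lambda_G$, and extremality of the ergodic measure $\mu\otimes\lambda_G$ in the simplex of invariant measures forces $R_g\nu=\mu\otimes\lambda_G$ for a.e.\ $g$, hence $\nu=\mu\otimes\lambda_G$. The proof is correct; the only cosmetic point is that in your ``elementary'' variant the passage from agreement on invariant sets to equality of measures should be routed through the Birkhoff averages of a countable dense family of continuous functions (whose convergence sets are invariant and of full $\mu\otimes\lambda_G$-measure), which is presumably what you intended.
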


\subsection{A remark on the KBSZ criterion for $Tx=x+\alpha$}
Let $Tx=x+\alpha$ be an irrational rotation on $\T$.
\begin{pr}\label{p1}
Let $A\subset \Z$.
\begin{enumerate}[(i)]
\item\label{H1}
If $A$ is such that
\begin{equation}\label{e2}
rA\cap sA=\emptyset\text{ for sufficiently large prime numbers }r\neq s,
\end{equation}
then \eqref{e1} holds true for every $f\in C(\T)$ such that ${\rm supp}\,\widehat f:=\{n\in\Z: \widehat{f}(n)\neq0\}=A$.
\item\label{H2}
Suppose that $A$ contains infinitely many primes.
Let $f\in C(\T)$ be such that ${\rm supp}\, \widehat{f}=A$ and all nonzero Fourier coefficients are positive.
Then~\eqref{e1} fails for $f$.
\end{enumerate}
\end{pr}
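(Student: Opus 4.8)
The plan is to evaluate the limit in \eqref{e1} explicitly by equidistribution and read both conclusions off the resulting formula. Since $Tx=x+\alpha$, we have $T^{rn}x=x+rn\alpha$ and $T^{sn}x=x+sn\alpha$, so the average in \eqref{e1} is $\frac1N\sum_{n\le N}(g\otimes\overline g)(x+rn\alpha,\,x+sn\alpha)$. First I would record that for distinct primes $r\ne s$ (hence coprime) the sequence $n\mapsto(rn\alpha,sn\alpha)$ is equidistributed, with respect to Haar measure, in the closed subgroup $A_0^{r,s}=\{(u,v)\in\T^2\colon su=rv\}$: testing a character $(u,v)\mapsto e^{2\pi i(au+bv)}$ gives $\frac1N\sum_{n\le N}e^{2\pi i(ar+bs)n\alpha}$, which tends to $\1_{\{ar+bs=0\}}$ because $\alpha\notin\Q$, and this matches the integral of that character over $A_0^{r,s}$ (parametrized by $t\mapsto(rt,st)$); by Weyl's criterion the equidistribution follows. (Equivalently, this is unique ergodicity of $(T^r\times T^s)$ on the coset $(x,x)+A_0^{r,s}$, cf.\ Lemma~\ref{lm:3.2.1}.) Hence, for every $g\in C(\T)$,
\[
\frac1N\sum_{n\le N}g(T^{rn}x)\overline{g(T^{sn}x)}\ \longrightarrow\ \int_{\T}g(x+rt)\,\overline{g(x+st)}\,dt .
\]
Computing this integral by Parseval — the $t$-Fourier coefficient of $t\mapsto g(x+rt)$ at $m=rk$ is $\widehat g(k)e^{2\pi ikx}$ and vanishes for $r\nmid m$, and likewise for $t\mapsto g(x+st)$ — only indices $m$ divisible by $rs$ survive, so writing $m=rsk$ gives
\[
\int_{\T}g(x+rt)\,\overline{g(x+st)}\,dt=\sum_{k\in\Z}\widehat g(sk)\,\overline{\widehat g(rk)}\,e^{2\pi i(s-r)kx},
\]
the series converging absolutely by Cauchy--Schwarz. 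This identity is the whole mechanism, and I expect the only point needing care is to derive the limit via equidistribution/Parseval rather than a naive interchange of summations, since $g$ is only assumed continuous.

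For part~\eqref{H1}, with $\operatorname{supp}\widehat g=A$: a term of index $k$ on the right is nonzero only if $\widehat g(sk)\ne0$ and $\widehat g(rk)\ne0$, i.e.\ $sk\in A$ and $rk\in A$; then $rsk=r(sk)\in rA$ and $rsk=s(rk)\in sA$, so $rsk\in rA\cap sA$ (the case $k=0$ being covered since $0\in rA\cap sA$ would force $0\in A$). By \eqref{e2}, $rA\cap sA=\emptyset$ for all sufficiently large primes $r\ne s$, so for such $r,s$ every term vanishes, the limit is $0$, and \eqref{e1} holds for $g$.

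For part~\eqref{H2}, I would use that $A$ contains infinitely many primes to pick two distinct large primes $r,s\in A$, and take $x=0$. By the formula above the average then converges to $\sum_{k}\widehat f(sk)\overline{\widehat f(rk)}$; every nonzero summand equals $\widehat f(sk)\widehat f(rk)>0$ because the nonzero Fourier coefficients of $f$ are positive reals, and the $k=1$ summand is present and equals $\widehat f(s)\widehat f(r)>0$ precisely because $r,s\in A$. Hence the limit is $\ge\widehat f(r)\widehat f(s)>0$, so \eqref{e1} fails at $x=0$ for these primes (indeed for any two distinct large primes lying in $A$), which is the assertion.
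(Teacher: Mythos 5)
Your proof is correct and follows essentially the same route as the paper's: equidistribution of $(rn\alpha,sn\alpha)$ on the closed subgroup $\{su=rv\}$ (and its cosets), the parametrization $t\mapsto(rt,st)$ reducing the limit to $\int_\T f(x+rt)\overline{f(x+st)}\,dt$, and Parseval to read off parts \eqref{H1} and \eqref{H2}. The only cosmetic difference is that you carry the base point $x$ through explicitly (obtaining the phase $e^{2\pi i(s-r)kx}$), whereas the paper computes at $(0,0)$ and remarks that general cosets are handled identically.
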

\begin{uw}
Note that every finite set satisfies \eqref{e2}, so all trigonometric polynomials satisfy~\eqref{e1}; the set $\{2^n: n\geq 1\}$
is an example of an infinite set satisfying~\ref{e2}.
\end{uw}
\begin{proof}[Proof of Proposition~\ref{p1}]
We will consider the behavior of the sums in~\eqref{e1} at $(0,0)$. Given $r,s$, two different prime numbers, we set
$I_{r,s}:=\{(x,y):  sx=ry\}$, which is a closed subgroup of $\T^2$ (of course $(0,0)\in I_{r,s}$), invariant under $T^r\times T^s$.
It is not hard to see that
\begin{equation}\label{e3} \frac1N\sum_{n\leq N} \delta_{rn\alpha,sn\alpha}\to \la_{I_{r,s}}\end{equation}
and that
\begin{align}
\begin{split}\label{e4}
&W\colon I_{r,s}\to\T,\;W(x,y)=ax+by\; (ar+bs=1)\\
&\mbox{is a continuous group isomorphism},
\end{split}
\end{align}
in particular, it sends $\la_{I_{r,s}}$ to $\la_{\T}$. In view of~\eqref{e3},
$$
\frac1N\sum_{n\leq N} f(T^{rn}0)\ov{f(T^{sn}0)}\to \int_{I_{r,s}}f\ot\ov{f}\,d\la_{I_{r,s}}.
$$
Since $W^{-1}(t)=(rt,st)$, it follows that
\begin{equation}\label{e5}
\int_{I_{r,s}}f\ot\ov{f}\,d\la_{I_{r,s}}=\int_{\T}(f\ot\ov{f})\circ W^{-1}(t)\,dt=\int_{\T}f(rt)\ov{f(st)}\,dt.\end{equation}
Now, if $f(t)=\sum_{n\in\Z}c_ne^{2\pi int}$ then
\begin{equation}\label{e6}
f(rt)=\sum_{n\in\Z}c_nne^{2\pi irnt},\;\;f(st)=\sum_{n\in\Z}c_ne^{2\pi isnt}\end{equation}
and we can compute $\int_{I_{r,s}}f\ot\ov{f}\,d\la_{I_{r,s}}$ using~\eqref{e5}, \eqref{e6} and Parseval's formula.

\eqref{H2} Fix infinitely many pairs $(r_j,s_j)\in A\times A$ of distinct prime numbers, $r_j,s_j\to\infty$. Take $j\geq 1$. It is enough to show that $\int_{I_{r_j,s_j}}f\ot\ov{f}\,d\la_{I_{r_j,s_j}}\neq0$.
By Parseval's formula, and the fact that all Fourier coefficients of $f$ are positive, we have $\int_{I_{r_j,s_j}}f\ot\ov{f}\,d\la_{I_{r_j,s_j}}\geq0$.
Since $r_js_j=s_jr_j$,  the $s_jr_j$th Fourier coefficient of $f(r_j\cdot)$ is $c_{s_j}>0$, while the $r_js_j$th Fourier coefficient of $f(s_j\cdot)$ is $c_{r_j}>0$.
Hence, by   Parseval's formula, $\int_{I_{r_j,s_j}}f\ot\ov{f}\,d\la_{I_{r_j,s_j}}>0$.

\eqref{H1} 
Each $(x,y)\in\T^2$ belongs to a coset of $I_{r,s}$. The proof for an arbitrary coset of $I_{r,s}$ goes along the same lines as for $I_{r,s}$ itself (on the cosets of  $I_{r,s}$, we consider translations of Haar measure $\la_{I_{r,s}}$ and $W$ is practically the same). Since $rA\cap   sA=\emptyset$,  the supports of the Fourier transforms of $f(r\cdot)$ and $f(s\cdot)$ are disjoint, whence  $\int_{\T}f(rt)\ov{f(st)}\,dt=0$.
\end{proof}

\footnotesize
\bibliography{cala.bib}

\def\cprime{$'$}
\begin{thebibliography}{10}

\bibitem{Abdalaoui:2013rm}
{\sc E.~H. {\noopsort{Abdalaoui}}El~Abdalaoui, S.~Kasjan, and
  M.~Lema\'{n}czyk}, {\em 0-1 sequences of the {T}hue-{M}orse type and
  {S}arnak's conjecture}.
\newblock http://arxiv.org/abs/1304.3587, 04 2013.

\bibitem{MR3121731}
{\sc E.~H. {\noopsort{Abdalaoui}}El~Abdalaoui, M.~Lema\'{n}czyk, and T.~de~la
  Rue}, {\em On spectral disjointness of powers for rank-one transformations
  and {M}{\"o}bius orthogonality}, J. Funct. Anal., 266 (2014), pp.~284--317.

\bibitem{MR0040594}
{\sc H.~Anzai}, {\em Ergodic skew product transformations on the torus}, Osaka
  Math. J., 3 (1951), pp.~83--99.

\bibitem{MR0164335}
{\sc J.~Auslander}, {\em On the proximal relation in topological dynamics},
  Proc. Amer. Math. Soc., 11 (1960), pp.~890--895.

\bibitem{MR3043150}
{\sc J.~Bourgain}, {\em M{\"o}bius-{W}alsh correlation bounds and an estimate
  of {M}auduit and {R}ivat}, J. Anal. Math., 119 (2013), pp.~147--163.

\bibitem{MR3095150}
\leavevmode\vrule height 2pt depth -1.6pt width 23pt, {\em On the correlation
  of the {M}oebius function with rank-one systems}, J. Anal. Math., 120 (2013),
  pp.~105--130.

\bibitem{MR2986954}
{\sc J.~Bourgain, P.~Sarnak, and T.~Ziegler}, {\em Disjointness of {M}\"obius
  from horocycle flows}, in From {F}ourier analysis and number theory to radon
  transforms and geometry, vol.~28 of Dev. Math., Springer, New York, 2013,
  pp.~67--83.

\bibitem{MR2207389}
{\sc C.~Dartyge and G.~Tenenbaum}, {\em Sommes des chiffres de multiples
  d'entiers}, Ann. Inst. Fourier (Grenoble), 55 (2005), pp.~2423--2474.

\bibitem{davenport}
{\sc H.~Davenport}, {\em On some infinite series involving arithmetical
  functions. {II}}, Quart. J. Math. Oxford, 8 (1937), pp.~313--320.

\bibitem{MR0123636}
{\sc R.~Ellis}, {\em A semigroup associated with a transformation group},
  Trans. Amer. Math. Soc., 94 (1960), pp.~272--281.

\bibitem{MR0133429}
{\sc H.~Furstenberg}, {\em Strict ergodicity and transformation of the torus},
  Amer. J. Math., 83 (1961), pp.~573--601.

\bibitem{MR0213508}
\leavevmode\vrule height 2pt depth -1.6pt width 23pt, {\em Disjointness in
  ergodic theory, minimal sets, and a problem in {D}iophantine approximation},
  Math. Systems Theory, 1 (1967), pp.~1--49.

\bibitem{MR603625}
\leavevmode\vrule height 2pt depth -1.6pt width 23pt, {\em Recurrence in
  ergodic theory and combinatorial number theory}, Princeton University Press,
  Princeton, N.J., 1981.
\newblock M. B. Porter Lectures.

\bibitem{MR2981162}
{\sc B.~Green}, {\em On (not) computing the {M}{\"o}bius function using bounded
  depth circuits}, Combin. Probab. Comput., 21 (2012), pp.~942--951.

\bibitem{MR2877066}
{\sc B.~Green and T.~Tao}, {\em The {M}{\"o}bius function is strongly
  orthogonal to nilsequences}, Ann. of Math. (2), 175 (2012), pp.~541--566.

\bibitem{MR1912360}
{\sc K.-H. Indlekofer and I.~K{{\'a}}tai}, {\em Investigations in the theory of
  {$q$}-additive and {$q$}-multiplicative functions. {I}}, Acta Math. Hungar.,
  91 (2001), pp.~53--78.

\bibitem{MR836415}
{\sc I.~K{\'a}tai}, {\em A remark on a theorem of {H}. {D}aboussi}, Acta Math.
  Hungar., 47 (1986), pp.~223--225.

\bibitem{katokbook2003}
{\sc A.~Katok}, {\em Combinatorial constructions in ergodic theory and
  dynamics}, vol.~30 of University Lecture Series, American Mathematical
  Society, Providence, RI, 2003.

\bibitem{MR0239047}
{\sc M.~Keane}, {\em Generalized {M}orse sequences}, Z.
  Wahrscheinlichkeitstheorie und Verw. Gebiete, 10 (1968), pp.~335--353.

\bibitem{MR1451873}
{\sc A.~Y. Khinchin}, {\em Continued fractions}, Dover Publications Inc.,
  Mineola, NY, russian~ed., 1997.
\newblock With a preface by B. V. Gnedenko, Reprint of the 1964 translation.

\bibitem{MR1286834}
{\sc J.~Kwiatkowski, M.~Lema{{\'n}}czyk, and D.~Rudolph}, {\em A class of real
  cocycles having an analytic coboundary modification}, Israel J. Math., 87
  (1994), pp.~337--360.

\bibitem{MR1091425}
{\sc M.~Lema{{\'n}}czyk and M.~K. Mentzen}, {\em Compact subgroups in the
  centralizer of natural factors of an ergodic group extension of a rotation
  determine all factors}, Ergodic Theory Dynam. Systems, 10 (1990),
  pp.~763--776.

\bibitem{Liu:2013fk}
{\sc J.~Liu and P.~Sarnak}, {\em The {M}{\"o}bius function and distal flows}.
\newblock http://arxiv.org/abs/1303.4957, 03 2013.

\bibitem{MR2680394}
{\sc C.~Mauduit and J.~Rivat}, {\em Sur un probl{\`e}me de {G}elfond: la somme
  des chiffres des nombres premiers}, Ann. of Math. (2), 171 (2010),
  pp.~1591--1646.

\bibitem{MaRi2013}
\leavevmode\vrule height 2pt depth -1.6pt width 23pt, {\em Prime numbers along
  {R}udin-{S}hapiro sequences}.
\newblock \url{http://iml.univ-mrs.fr/~rivat/preprints/PNT-RS.pdf}, 2013.

\bibitem{MR3014544}
{\sc P.~Sarnak}, {\em Mobius randomness and dynamics}, Not. S. Afr. Math. Soc.,
  43 (2012), pp.~89--97.

\bibitem{phd-siemaszko}
{\sc A.~Siemaszko}, {\em Ergodic and topological properties of distal systems}.
\newblock PhD thesis, 1996.

\end{thebibliography}

\vspace{2ex}
\noindent Joanna Ku\l aga-Przymus:\\
Institute of Mathematics, Polish Academy of Sciences, \'{S}niadeckich 8, 00-956 Warsaw, Poland \\
and
\\
Faculty of Mathematics and Computer Science, Nicolaus Copernicus University, Chopina 12/18, 87-100 Toru\'{n}, Poland\\
\textit{E-mail address:} \texttt{joanna.kulaga@gmail.com}\\
\\
\noindent Mariusz Lema\'nczyk:\\
Faculty of Mathematics and Computer Science, Nicolaus Copernicus University, Chopina 12/18, 87-100 Toru\'{n}, Poland\\
\textit{E-mail address:} \texttt{mlem@mat.umk.pl}\\

%\begin{minipage}[l]{.5\textwidth}
%\noindent Joanna Ku\l aga-Przymus:\\ \\
%Faculty of Mathematics\\
% and Computer Science,\\
%Nicolaus Copernicus University,\\
% Chopina 12/18,\\
%87-100 Toru\'{n}, Poland\\ \\
%and\\ \\
%Institute of Mathematics,\\
%Polish Acadamy of Sciences,\\
%\'{S}niadeckich 8,\\
%00-956 Warszawa, Poland\\
%\\
%joasiak@mat.umk.pl
%\end{minipage}
%\begin{minipage}[adjusting]{.5\textwidth}
%\noindent Mariusz Lema\'nczyk:\\
%\\
%Faculty of Mathematics\\
%and Computer Science,\\
%Nicolaus Copernicus University,\\
%Chopina 12/18,\\
%87-100 Toru\'{n}, Poland\\ \\
%mlem@mat.umk.pl\\
%\\
%\\
%\\
%\\
%\\
%\\
%\end{minipage}

\end{document}